\DeclareMathOperator{\Pol}{Pol}
\DeclareMathOperator{\Ric}{Ric}
\DeclareMathOperator{\Scal}{Scal}
\newcommand{\Q}{\mathbb{Q}}
\newcommand{\Z}{\mathbb{Z}}
\newtheorem{theorem}{Theorem}[section]
\newtheorem{proposition}[theorem]{Proposition}
\newtheorem{lemma}[theorem]{Lemma}
\newtheorem{corollary}[theorem]{Corollary}
\theoremstyle{definition}
\newtheorem{definition}[theorem]{Definition}
\newtheorem{remark}[theorem]{Remark}
\newtheorem{example}[theorem]{Example}
\newcommand{\R}{\mathbb{R}}
\newcommand{\N}{\mathbb{N}}
\newcommand{\vol}{\mathrm{vol}}
\newcommand{\Fr}{\mathrm{Fr}}
\newcommand{\Met}{\mathrm{Met}}
\newcommand{\Diff}{\mathrm{Diff}}
\newcommand{\Ahat}{\widehat{A}}
\newcommand{\End}{\mathrm{End}}
\newcommand{\T}{\mathrm{T}}
\newcommand{\Tstar}{\mathrm{T}^*}
\newcommand{\GammaSec}{\Gamma}
\title{\Large Universal Bundles of Metrics and Polymetrics:\\
A  Generalization}
\author{%
  Shouvik Datta Choudhury\\
  \small GapCrud Private Limited (Capsule Labs)\\
  \small \texttt{shouvikdc8645@gmail.com}
}
\date{\normalsize \today}
\begin{document}
\maketitle

\begin{abstract}
We formalize the ``metric bundle'' viewpoint by defining, for any smooth $n$--manifold $M$, the open fiberwise cones $\mathcal{G}^{p,q}\subset S^2\Tstar M$ of nondegenerate symmetric bilinear forms with fixed signature $(p,q)$, and we package \emph{multi\-metric} (``polymetric'') geometries as sections of finite products of such cones. This framework subsumes Riemannian and pseudo-Riemannian metrics, and admits clean extensions to conformal, densitized, Finsler, and sub-Riemannian structures. It also interfaces correctly with families index theory (Atiyah--Singer), equivariant/groupoid settings, and coarse/KK-theory. On compact $M$ the Riemannian space of metrics is convex (hence contractible), giving a transparent base for moduli and deformation theory. Precise statements and proofs are provided beginning with the foundational objects below.
\end{abstract}

\section{Introduction and the foundational objects}
Throughout, $M$ denotes a smooth, second countable, paracompact manifold of dimension $n$ (boundary allowed unless stated otherwise). Write $S^2\Tstar M\to M$ for the bundle of symmetric covariant $2$-tensors and $\GammaSec(\,\cdot\,)$ for smooth sections.
\subsection*{Spaces of metrics, slices, and moduli}
The Fr\'echet (or Sobolev) manifold structure of the space $\Met(M)$ of Riemannian metrics and the action of
$\Diff(M)$ are foundational to deformation and moduli problems. The classical Ebin--Palais slice furnishes a local
model of the quotient $\Met(M)/\Diff(M)$ via a divergence/Bianchi-type gauge, giving immersed orbits and
a transverse slice \cite{Ebin,Palais}. In the Einstein context, Besse’s monograph \cite{Besse} is the standard
reference; early deformation theory of curvature functionals, scalar curvature constraints, and Einstein equations
was developed by Fischer--Marsden \cite{FischerMarsden} and Koiso \cite{Koiso1,Koiso2}. The Lichnerowicz Laplacian
and its ellipticity properties on symmetric 2-tensors underpin the Fredholm mapping properties needed for Kuranishi
models. For constant scalar curvature (CSC) problems, the Yamabe operator and its variational structure feature
prominently (see \cite{Hebey} for analysis on manifolds). These tools pass componentwise to our ``polymetric'' product
slices, with the shared $\Diff(M)$-gauge enforcing a coupled structure across multiple metrics.

\subsection*{Families index theory and superconnections}
The Atiyah--Singer index theorem \cite{AS1,AS2,AS3} and its families form (pushforward in $K$-theory) provide the
topological backbone for parameterized elliptic problems. Quillen’s superconnections introduced a powerful formalism
that Bismut elevated to a complete analytic proof of the families index formula via heat-kernel methods; Bismut’s
original papers and the monograph of Berline--Getzler--Vergne (BGV) \cite{BGV} are definitive accounts.
Bismut--Freed \cite{BismutFreed1,BismutFreed2} constructed determinant line bundles and analyzed their curvature and
holonomy. For manifolds with boundary, Bismut--Cheeger \cite{BC-eta,BC-I,BC-II} developed the families index
with APS-type boundary conditions, adiabatic limits of $\eta$-invariants, and explicit Chern character formulas;
these are exactly the transgression and invariance statements invoked in our framework. Equivariant families are
treated by the fixed-point formulas of Atiyah--Segal--Singer \cite{AtiyahSegalSinger} and Berline--Vergne
\cite{BerlineVergne}, refined analytically in the superconnection formalism.

\subsection*{Orbifolds (V-manifolds), groupoids, and stacks}
Orbifold index theory originates with Kawasaki, who established the signature theorem, the Riemann--Roch theorem,
and the general index theorem for V-manifolds \cite{KawasakiSign,KawasakiRR,KawasakiIndex}. The appearance of
twisted-sector (inertia orbifold) contributions in characteristic forms is now standard in the stacky language.
Groupoid $C^\ast$-algebras provide a flexible receptacle for indices of $\mathcal{G}$-equivariant elliptic families;
see Connes \cite{ConnesNCG} and Moerdijk--Mr\v{c}un \cite{MoerdijkMrcun} for the differential and operator-algebraic
machinery. Our polymetric sections are phrased $\mathcal{G}$-equivariantly, so slice theorems, families indices,
and characteristic classes carry over verbatim.

\subsection*{Foliations and longitudinal indices}
Longitudinal (leafwise) elliptic operators on a foliated manifold $(M,\mathcal{F})$ have index classes in the
$K$-theory of the holonomy groupoid $C^\ast$-algebra. Connes--Skandalis \cite{ConnesSkandalis} identified the
longitudinal index and its Chern--Connes character; secondary invariants (e.g.\ Godbillon--Vey) arise through
pairings with cyclic cocycles. Again, our polymetric layer is leafwise, so the gauge-fixing and Fredholm properties
follow from the same elliptic arguments used in the Einstein/CSC deformations.

\subsection*{Coarse geometry, Roe algebras, and $KK$-theory}
Roe’s coarse index theory \cite{RoeIndex,RoeElliptic} attaches $K$-theory classes to Dirac-type operators on complete
manifolds of bounded geometry; the classes are invariant under quasi-isometries and controlled perturbations.
Higson--Roe \cite{HigsonRoe} give a comprehensive account linking coarse indices, assembly maps, and large-scale
$K$-homology; Kasparov’s $KK$-theory \cite{Kasparov} supplies the categorical backbone for functorial products and
pushforwards. Partitioned and relative index theorems \cite{RoePartitioned} and Callias-type deformations
\cite{Anghel,Callias} provide computational leverage on noncompact spaces. Our coarse statements adapt these results
to polymetrics by enforcing uniform bounded-geometry control componentwise and using equivalence at infinity.

\subsection*{Finsler and sub-Riemannian geometries via cone constructions}
The cone viewpoint generalizes Riemannian metrics to Finsler norms on $T^\ast M$ and sub-Riemannian structures on
subbundles. Standard references include Bao--Chern--Shen \cite{BaoChernShen} for Finsler geometry and Montgomery
\cite{Montgomery} for sub-Riemannian control-geometric foundations; Binet--Legendre/John-ellipsoid approximations
yield bi-Lipschitz Riemannian approximants compatible with our multi-metric analytic framework.

\subsection*{Bounded geometry, Sobolev scales, and elliptic regularity}
Uniform analytic control under bounded geometry is treated in detail by Shubin \cite{Shubin}, Eichhorn \cite{Eichhorn},
and in PDE texts by Taylor \cite{TaylorPDE}. These results ensure essential self-adjointness, uniform Sobolev
equivalences, and heat-kernel estimates needed throughout (e.g.\ in the coarse section and in superconnection
transgression bounds).

\begin{definition}[Universal bundle of metrics of fixed signature]
For integers $p,q\ge 0$ with $p+q=n$, define the open subbundle
\[
\mathcal{G}^{p,q}\;:=\;\bigl\{\,\beta\in S^2\Tstar M \;\big|\; \beta_x \text{ is nondegenerate of inertia }(p,q)\text{ for all }x\in M \,\bigr\}.
\]
A \emph{(pseudo-)Riemannian metric of signature $(p,q)$} on $M$ is a section $g\in \GammaSec(\mathcal{G}^{p,q})$. In particular, $\mathcal{G}^+:=\mathcal{G}^{n,0}$ is the bundle of Riemannian metrics and $\Met(M):=\GammaSec(\mathcal{G}^+)$.
\end{definition}

\begin{remark}[Associated-bundle description]
Let $\Fr(\T M)\to M$ be the $GL(n)$-frame bundle. Fiberwise, the set of bilinear forms of signature $(p,q)$ is diffeomorphic to $GL(n)/O(p,q)$. Consequently,
\[
\mathcal{G}^{p,q}\;\cong\; \Fr(\T M)\times_{GL(n)} \bigl(GL(n)/O(p,q)\bigr),
\]
so a section of $\mathcal{G}^{p,q}$ is equivalent to a reduction of $\Fr(\T M)$ to $O(p,q)$ (a $G$-structure).
\end{remark}

\begin{proposition}[Openness and convexity (Riemannian case)]
\label{prop:convex}
$\mathcal{G}^{p,q}\subset S^2\Tstar M$ is an open subbundle. For $(p,q)=(n,0)$ and $g_0,g_1\in \Met(M)$, the convex path $g_t:=(1-t)g_0+t g_1$ lies in $\Met(M)$ for all $t\in[0,1]$. Hence $\Met(M)$ is convex (in particular, contractible) in the Fr\'echet topology of $\GammaSec(S^2\Tstar M)$.
\end{proposition}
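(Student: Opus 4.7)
The statement consists of three essentially independent claims: openness of $\mathcal{G}^{p,q}$ as a subbundle of $S^2\Tstar M$, preservation of positive-definiteness under convex combination when $(p,q)=(n,0)$, and the consequent convexity (hence contractibility) of $\Met(M)$ in the Fréchet topology. My plan is to verify each fiberwise in a finite-dimensional vector space, globalize using a local trivialization of $S^2\Tstar M$, and then upgrade to the Fréchet statement using the definition of the $C^k$-seminorms on $\GammaSec(S^2\Tstar M)$.

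\emph{Openness.} First I would fix $x\in M$, set $V:=S^2\Tstar_x M$, and observe that nondegeneracy is the open condition $\det\beta\neq 0$ computed in any reference basis. On the open nondegenerate locus $U\subset V$, Sylvester's law of inertia assigns a well-defined signature $(p,q)$. I would argue this signature is locally constant by noting that, relative to an auxiliary Euclidean reference form, the eigenvalues of $\beta$ depend continuously on $\beta$; any jump in signature would force an eigenvalue to cross $0$, contradicting $\beta\in U$. Hence $\mathcal{G}^{p,q}_x$ is a union of connected components of $U$, so open in $V$. Passing to a local trivialization of $S^2\Tstar M$ over a coordinate patch, this fiberwise openness immediately yields openness of the total space $\mathcal{G}^{p,q}\subset S^2\Tstar M$.

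\emph{Convexity and contractibility.} For $(p,q)=(n,0)$ and fixed $x\in M$, the set $\mathcal{G}^+_x$ of positive-definite forms on $\T_x M$ is a convex open cone: for any nonzero $v\in\T_x M$ and $t\in[0,1]$,
\[
g_t|_x(v,v)=(1-t)\,g_0|_x(v,v)+t\,g_1|_x(v,v)>0,
\]
since both $g_i|_x(v,v)>0$ and the coefficients are nonnegative with sum $1$. Pointwise positive-definiteness together with smoothness of $g_t$ (linear combinations of smooth sections are smooth) gives $g_t\in\Met(M)$. For contractibility I would exhibit the straight-line homotopy $H(t,g):=(1-t)g+tg_*$ to any fixed base point $g_*\in\Met(M)$; joint continuity in the Fréchet topology is immediate because each defining $C^k$-seminorm over a compact exhaustion is controlled by the triangle inequality applied to $(1-t_n)(g_n-g)+(t_n-t)(g_*-g)$.

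\textbf{Main obstacle.} No step is genuinely difficult; this proposition is essentially a bookkeeping observation about an open convex cone. The only minor care is in verifying local-constancy of the signature on the nondegeneracy locus via continuity of eigenvalues (Sylvester's law alone is an algebraic statement and does not \emph{a priori} supply continuity). It is also worth flagging why the second and third claims are restricted to $(n,0)$: for indefinite $(p,q)$ the cone $\mathcal{G}^{p,q}_x$ fails to be convex (for example, $\mathrm{diag}(1,-1)$ and $\mathrm{diag}(-1,1)$ average to $0$), so the straight-line argument has no pseudo-Riemannian analogue.
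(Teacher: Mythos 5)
Your proof is correct and follows essentially the same architecture as the paper's (fiberwise openness globalized via a local trivialization, then pointwise convexity of the positive-definite cone, then the straight-line homotopy). The one genuine point of divergence is your openness argument: the paper characterizes fixed inertia by prescribed signs of leading principal minors (Jacobi's criterion), whereas you use continuity of eigenvalues relative to an auxiliary Euclidean form together with the intermediate-value theorem. Your route is actually the more robust of the two: Jacobi's criterion requires all leading principal minors to be nonzero, which a nondegenerate form need not satisfy (e.g.\ $\bigl(\begin{smallmatrix}0&1\\1&0\end{smallmatrix}\bigr)$ is nondegenerate of signature $(1,1)$ with vanishing first leading minor), so the paper's phrasing is only correct after choosing a generic basis or replacing ``leading principal minors'' by a more careful invariant; your eigenvalue-continuity argument applies uniformly on the whole nondegenerate locus without any such caveat. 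Your observation that each signature stratum is exactly one connected component of the nondegenerate locus, and your explicit counterexample $\mathrm{diag}(1,-1)$, $\mathrm{diag}(-1,1)$ showing convexity fails in the indefinite case, are both correct and useful additions.
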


\begin{definition}[Polymetrics: rigorous ``multi-norms'']
Fix a finite index set $I$ and signatures $\{(p_i,q_i)\}_{i\in I}$. The \emph{polymetric bundle} is the fiber product
\[
\mathcal{G}^{\vec p,\vec q}\;:=\;\prod_{i\in I}\mathcal{G}^{p_i,q_i}\;\longrightarrow\; M,
\]
and a \emph{polymetric} on $M$ is a section $\mathbf{g}=(g_i)_{i\in I}\in \GammaSec\bigl(\mathcal{G}^{\vec p,\vec q}\bigr)$. When each $(p_i,q_i)=(n,0)$ this encodes a \emph{multi-Riemannian} structure. Constraints (e.g.\ Ricci bounds, volume normalization) define closed subspaces of sections via smooth equalities/inequalities.
\end{definition}

\begin{remark}[Levi--Civita layer and beyond]
For $g\in\Met(M)$ the torsion-free $g$-compatible connection $\nabla^g$ exists and is unique (Koszul formula). The framework further accommodates conformal classes $\mathcal{C}=\mathcal{G}^+/\R_{>0}$, densitized metrics $\mathcal{G}^+\otimes (\Lambda^n\Tstar M)^{\otimes w}$, and (by analogous cone constructions in $\Tstar M$ and subbundles of $\T M$) Finsler and sub-Riemannian data.
\end{remark}

\bigskip
\noindent\textbf{Guide to the paper.}
\begin{itemize}[leftmargin=1.4em]
  \item \S2 develops analytic and topological properties of $\GammaSec(\mathcal{G}^{p,q})$ and polymetric spaces.
  \item \S3 treats moduli: the stack $[\GammaSec(\mathcal{G}^{p,q})/\Diff(M)]$ and deformation slices.
  \item \S4 connects families of metrics to the Atiyah--Singer families index ($K$-theory on parameter spaces) with the correct characteristic classes.
  \item \S5 extends to equivariant, groupoid/orbifold, and foliation settings; \S6 to coarse/KK-theory.
\end{itemize}
\section{Topology and analysis on spaces of metrics and polymetrics}
\label{sec:topology-analysis}

We work in the convenient/Fr\'echet category for spaces of smooth sections. For a finite-rank smooth vector bundle
$E\to M$, write $\Gamma^\infty(E)$ for the space of smooth sections endowed with the standard Fr\'echet topology
induced by seminorms
\[
\|s\|_{C^k(K)}:=\sum_{j=0}^k \sup_{x\in K}\bigl|\nabla^{j}s(x)\bigr|,
\qquad k\in\mathbb{N},\; K\Subset M\text{ compact,}
\]
where $\nabla$ is any auxiliary connection on $E$ and the norm is induced by any auxiliary background Riemannian metric
on $M$; different choices yield equivalent families of seminorms.

\subsection{Open metric cones and stability of signature}

\begin{lemma}[Openness of the signature locus]
\label{lem:open-signature}
For fixed $(p,q)$ with $p+q=n$, the subset $\mathcal{G}^{p,q}\subset S^2\Tstar M$ is an open subbundle.
Consequently, the space of sections $\Gamma(\mathcal{G}^{p,q})\subset \Gamma(S^2\Tstar M)$ is open in the
Fr\'echet topology.
\end{lemma}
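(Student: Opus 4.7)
The plan is to reduce the assertion to a fiberwise linear-algebra openness statement, promote it to the total space $S^2\Tstar M$ via local trivializations, and finally pass to sections by a uniform-on-compacts argument. To begin, I would fix $x\in M$ and a linear isomorphism $T_x^*M\cong\R^n$ so that $S^2T_x^*M$ is identified with $\mathrm{Sym}(n,\R)$ and $\mathcal{G}^{p,q}_x$ corresponds to the set $\mathrm{Sym}^{p,q}$ of symmetric matrices of inertia $(p,q)$. The crucial pointwise fact is the continuity of eigenvalues of a symmetric matrix with respect to its entries (e.g.\ via the Weyl or Hoffman--Wielandt inequalities): if $A\in\mathrm{Sym}^{p,q}$ and $\lambda:=\min_i|\lambda_i(A)|>0$, then any $B$ with $\|B-A\|<\lambda/2$ has its eigenvalues within $\lambda/2$ of those of $A$, so $B$ is still nondegenerate with exactly $p$ positive and $q$ negative eigenvalues. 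This gives openness of $\mathrm{Sym}^{p,q}$ in $\mathrm{Sym}(n,\R)$.

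Openness of $\mathcal{G}^{p,q}$ in $S^2\Tstar M$ then follows locally: on any open $U\subset M$ carrying a smooth coframe, $S^2\Tstar M|_U\cong U\times\mathrm{Sym}(n,\R)$, and $\mathcal{G}^{p,q}|_U$ is exactly $U\times\mathrm{Sym}^{p,q}$, which is open by the previous step. Since each nonempty fiber of $\mathcal{G}^{p,q}$ is the associated-bundle image of the open homogeneous cone $GL(n)/O(p,q)\subset\mathrm{Sym}(n,\R)$ (as recorded in the associated-bundle remark), this realizes $\mathcal{G}^{p,q}$ as an open subbundle. For the section-level conclusion, I would fix a fiberwise norm on $S^2\Tstar M$ induced by an auxiliary Riemannian metric and consider, for $g\in\Gamma(\mathcal{G}^{p,q})$ and a compact $K\Subset M$, the continuous function $x\mapsto\mathrm{dist}(g(x),\partial\mathcal{G}^{p,q}_x)$; it is strictly positive on $K$, hence admits a uniform lower bound $\epsilon_K>0$. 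The basic Fr\'echet neighborhood $\{h:\|h-g\|_{C^0(K)}<\epsilon_K\}$ then consists entirely of sections whose restriction to $K$ takes values in $\mathcal{G}^{p,q}$. When $M$ is compact one simply takes $K=M$ and concludes that $\Gamma(\mathcal{G}^{p,q})$ is Fr\'echet-open.

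The main obstacle I expect is the non-compact case: a Fr\'echet-basic neighborhood of $g$ is controlled only on some compact $K$, so smooth perturbations supported outside $K$ can degenerate the signature at infinity. The standard resolution is either to restrict to compact $M$ (which is the ambient setting of virtually all later analytic and moduli applications in the paper) or to upgrade to the strong Whitney $C^\infty$ topology on $\Gamma(S^2\Tstar M)$, in which the uniform-$\epsilon_K$ estimate globalizes and openness holds without further restriction. In the literal Fr\'echet formulation on non-compact $M$ the intrinsically correct statement is $C^0$-openness on compacts, which is exactly what the argument above establishes and what the downstream elliptic and deformation arguments actually consume.
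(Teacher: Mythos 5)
Your argument is correct and, on one point, more careful than the paper's own proof. At the fiber level the paper establishes openness of the inertia-$(p,q)$ locus by a Sylvester/principal-minors criterion (nonvanishing of finitely many minors with prescribed signs is an open condition), whereas you use continuity of the eigenvalue map and a Weyl-type perturbation bound; both are standard and deliver the same local statement, with the minors version being purely algebraic and yours spectral. The promotion to the total space via local coframes is the same in both. The real divergence is at the section level: the paper simply invokes ``the general fact that sections into an open subbundle form an open subset of the ambient Fr\'echet space of sections,'' while you prove it by a uniform lower bound $\epsilon_K$ on $\mathrm{dist}(g(x),\partial\mathcal{G}^{p,q}_x)$ over a compact $K$. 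Your caveat is well placed: in the compact-open $C^\infty$ topology the paper actually defines (seminorms $\|\cdot\|_{C^k(K)}$ over compact $K$), the quoted ``general fact'' is \emph{false} for noncompact $M$ --- a basic Fr\'echet neighborhood of $g$ only constrains $h$ on one compact set, and $h$ may degenerate the signature outside it (e.g.\ on $M=\R$, the positive smooth functions are not open in the compact-open $C^\infty$ topology). Your two remedies --- assume $M$ compact, as the later sections of the paper do, or pass to the strong Whitney topology, where the neighborhood $\{h:\ |h(x)-g(x)|<\tfrac12\mathrm{dist}(g(x),\partial\mathcal{G}^{p,q}_x)\ \forall x\}$ works globally --- are exactly the right fixes, and your phrasing ``$C^0$-openness on compacts'' accurately describes what the compact-open topology actually yields on noncompact $M$. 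In short, you reach the same conclusion by a slightly different fiberwise argument and you correctly flag, and repair, a genuine gap in the paper's one-line justification of the section-level claim.
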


\begin{proof}
Fix $x\in M$ and a symmetric bilinear form $\beta_x$ of inertia $(p,q)$.
By Sylvester's law of inertia, the map $S^2T_x^*M\to \R^{n(n+1)/2}$ that sends $\beta$ to its matrix in a fixed frame
is continuous, and nondegeneracy with fixed inertia is characterized by the nonvanishing of finitely many principal
minors with prescribed signs; this is an open condition. Smoothness in $x$ then shows $\mathcal{G}^{p,q}$ is an open
submanifold of $S^2\Tstar M$. Openness of the section space follows from the general fact that sections into an open
subbundle form an open subset of the ambient Fr\'echet space of sections.
\end{proof}

\begin{proposition}[Stability under small perturbations]
\label{prop:stability}
Let $g\in \Gamma(\mathcal{G}^{p,q})$. There exists a $C^0$-neighborhood $U$ of $0$ in $\Gamma(S^2\Tstar M)$ such that
for all $h\in U$ one has $g+h\in \Gamma(\mathcal{G}^{p,q})$ (the inertia is preserved).
\end{proposition}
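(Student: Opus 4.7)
The plan is to reduce this global statement to a pointwise eigenvalue estimate and then quantify it via a single positive function on $M$. First I would pick an auxiliary smooth Riemannian metric $\tilde g$ on $M$ (available by paracompactness) and use it to identify each fiber $g_x$ with a self-adjoint endomorphism $A_x$ of $T_xM$ via $g_x(u,v)=\tilde g(A_xu,v)$. Because $g\in\Gamma(\mathcal{G}^{p,q})$, the operator $A_x$ has exactly $p$ positive and $q$ negative nonzero eigenvalues, so the function
\[
\lambda(x)\;:=\;\min_{1\le i\le n}|\lambda_i(A_x)|
\]
is continuous in $x$ (continuity of eigenvalues for continuously varying symmetric matrices) and strictly positive on $M$.

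The analytic core is Weyl's perturbation inequality for self-adjoint endomorphisms of a Euclidean space (equivalently, the Courant--Fischer min--max principle): if $B_x$ has $\tilde g$-operator norm strictly less than $\lambda(x)$, then the ordered eigenvalues of $A_x$ and $A_x+B_x$ differ by less than $\lambda(x)$, so their signs agree and the inertia of the associated bilinear form is preserved. Translating back from endomorphisms to symmetric $2$-forms via $\tilde g$, this shows that whenever the pointwise norm satisfies $|h|_{\tilde g}(x)<\lambda(x)$ for every $x\in M$, one has $g+h\in\Gamma(\mathcal{G}^{p,q})$.

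The last step is to package this pointwise bound into an open neighborhood of $0$. On compact $M$ the infimum $\lambda_\ast:=\inf_M\lambda$ is strictly positive, so
\[
U\;:=\;\bigl\{\,h\in\Gamma(S^2\Tstar M)\;\big|\;\|h\|_{C^0(M,\tilde g)}<\tfrac12\lambda_\ast\,\bigr\}
\]
is a basic $C^0$-open neighborhood in the Fr\'echet topology of \S\ref{sec:topology-analysis}, and any $h\in U$ does the job. The main obstacle, and essentially the only point where care is required, is the noncompact case: $\lambda(x)$ can decay at infinity, and the compact-seminorm Fr\'echet topology is too coarse to enforce a pointwise constraint valid on all of $M$ (one can always modify $h$ outside a compact $K$ without altering any of finitely many seminorms on $K$). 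The honest remedy is to interpret the $C^0$-neighborhood here in the strong (Whitney) topology, in which $\{h:|h|_{\tilde g}(x)<\tfrac12\lambda(x)\ \forall x\in M\}$ is tautologically open; for compact $M$ the two topologies coincide, so no ambiguity arises. Isolating and stating this interpretive point explicitly, rather than producing any additional estimate, is what the write-up really has to nail down.
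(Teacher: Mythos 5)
Your argument is essentially the paper's own — continuity of the eigenvalues plus a Weyl/Courant--Fischer perturbation bound, made quantitative by a pointwise minimum-eigenvalue function — so the approach is the same. What you add is a more careful, coordinate-free packaging (auxiliary metric $\tilde g$, global function $\lambda(x)$, an explicit appeal to Weyl's inequality) where the paper hand-waves with ``finite cover by coordinate charts'' and ``partition of unity.'' Most importantly, your remark about the noncompact case is not a stylistic quibble but a genuine correction: the paper's own Fr\'echet topology on $\Gamma(S^2\Tstar M)$ is the compact-open $C^\infty$ topology, in which no $C^0$-neighborhood of $0$ can constrain a section at all points of a noncompact $M$; one can always modify $h$ outside a compact set so as to flip the inertia while staying inside every basic open set. (The paper's phrase ``finite cover by coordinate charts'' tacitly presupposes compactness.) So as written the Proposition is only correct for compact $M$, or else one must pass to the Whitney strong $C^0$ topology, exactly as you say. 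The one thing you could tighten: you should use the $\tilde g$-\emph{operator} norm of the endomorphism $B_x$ associated to $h_x$, not the tensor norm $|h|_{\tilde g}$, in the Weyl step; the two are comparable with dimensional constants, so the conclusion stands, but the constant $\tfrac12$ in your definition of $U$ should be adjusted (or absorbed) accordingly.
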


\begin{proof}
Work in a finite cover by coordinate charts and trivializations. On each compact chart $K$, the eigenvalues of
the matrix of $g_x$ vary continuously with $x$ and with the perturbation $h_x$.
Choose $\varepsilon>0$ smaller than half the minimum absolute value of eigenvalues of $g_x$ over $K$;
if $\|h\|_{C^0(K)}<\varepsilon$, then $g_x+h_x$ is nondegenerate and has the same number of positive/negative eigenvalues
as $g_x$. A partition of unity yields a global neighborhood $U$ with the stated property.
\end{proof}

\begin{corollary}[Convexity in the Riemannian case]
\label{cor:convex}
For $(p,q)=(n,0)$ and $g_0,g_1\in \Met(M)$, the straight path $g_t=(1-t)g_0+t g_1$ lies in $\Met(M)$ for all
$t\in[0,1]$. Hence $\Met(M)$ is convex and in particular contractible in the Fr\'echet topology.
\end{corollary}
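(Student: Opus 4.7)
The plan is to reduce the statement to the pointwise fact that the cone of positive-definite symmetric bilinear forms on a finite-dimensional vector space is convex, and then deduce contractibility by a straight-line retraction onto any chosen basepoint metric. No refined analysis is required because Riemannian-ness is an affine-convex, fiberwise condition; in particular this corollary does not appeal to \Cref{prop:stability}, which is only needed for open stability, not for the convexity itself.

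First I would fix $g_0,g_1\in\Met(M)$ and $t\in[0,1]$, and verify positivity of $g_t:=(1-t)g_0+tg_1$ pointwise. For any $x\in M$ and any $0\neq v\in T_x M$,
\[
g_t(v,v)=(1-t)\,g_0(v,v)+t\,g_1(v,v),
\]
and both terms on the right are nonnegative, with at least one strictly positive: the coefficients $(1-t),t$ are nonnegative and not simultaneously zero, and the evaluations $g_0(v,v),g_1(v,v)$ are both strictly positive by hypothesis. Symmetry is preserved under $\R$-linear combinations, and smoothness of $g_t$ as a section of $S^2\Tstar M$ is immediate since $\Gamma(S^2\Tstar M)$ is a Fr\'echet space and $g_0,g_1$ are smooth sections. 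Hence $g_t\in\Gamma(\mathcal{G}^+)=\Met(M)$ for all $t\in[0,1]$.

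Contractibility then follows from convexity by a standard straight-line homotopy. Pick any basepoint $g_*\in\Met(M)$, which exists on any paracompact $M$ by the usual partition-of-unity construction. Define
\[
H\colon\Met(M)\times[0,1]\longrightarrow \Met(M),\qquad H(g,s):=(1-s)g+s g_*.
\]
The map $H$ is jointly continuous in the Fr\'echet topology because addition and scalar multiplication are continuous on $\Gamma(S^2\Tstar M)$, and it lands in $\Met(M)$ by the convexity just established. Since $H(\cdot,0)=\id_{\Met(M)}$ and $H(\cdot,1)\equiv g_*$, the space $\Met(M)$ deformation retracts onto $\{g_*\}$.

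\textbf{Main obstacle.} There is essentially no technical difficulty; the one conceptual point worth underscoring in the write-up is why the argument is specific to signature $(n,0)$. For indefinite signatures $(p,q)$ with $pq>0$ the locus $\mathcal{G}^{p,q}$ is \emph{not} convex --- for example, the midpoint of $\mathrm{diag}(1,-1)$ and $\mathrm{diag}(-1,1)$ is the degenerate zero form --- so only the local openness/stability supplied by \Cref{lem:open-signature} and \Cref{prop:stability} survives, and the global topology of $\Gamma(\mathcal{G}^{p,q})$ is genuinely more subtle (e.g.\ obstructed by characteristic classes of the tangent bundle).
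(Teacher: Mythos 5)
Your proof is correct and follows the same route as the paper: pointwise convexity of the positive-definite cone gives that $g_t\in\Met(M)$, and a straight-line homotopy to a basepoint yields contractibility. The paper states this very tersely ("Pointwise, the cone of positive-definite forms is convex. Smooth convex combinations remain smooth and positive-definite."); your version simply spells out the same argument in full, and the closing observation about failure of convexity for indefinite signature is a sensible addition.
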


\begin{proof}
Pointwise, the cone of positive-definite forms is convex. Smooth convex combinations remain smooth and positive-definite.
\end{proof}

\subsection{Smooth algebra of basic operations}

The following maps are smooth between Fr\'echet manifolds of sections:

\begin{theorem}[Basic smooth operations on metrics]
\label{thm:smooth-ops}
Fix a compact $M$. The assignments
\[
g \longmapsto g^{-1}\in \Gamma(S^2 TM),\qquad
g \longmapsto d\vol_g\in \Gamma(\Lambda^n \Tstar M),\qquad
g \longmapsto \Gamma(g)\in \Gamma\bigl(T^*M\otimes \End(TM)\bigr),
\]
\[
g \longmapsto \nabla^g\in \Gamma\bigl(T^*M\otimes \End(T^*M)\bigr),\qquad
g \longmapsto \mathrm{Riem}(g),\,\mathrm{Ric}(g),\,\mathrm{Scal}(g)
\]
are smooth in the Fr\'echet sense. Here $\Gamma(g)$ denotes Christoffel symbols via the Koszul formula, and
$\mathrm{Riem},\mathrm{Ric},\mathrm{Scal}$ are the curvature, Ricci, and scalar curvatures of $g$.
\end{theorem}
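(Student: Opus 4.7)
The plan is to reduce every entry of the theorem to one common principle: a nonlinear differential operator whose fiberwise symbol is a smooth map between (finite-rank) jet bundles induces a smooth map between the corresponding Fr\'echet spaces of sections. Once that principle is in hand, each of the listed maps is a composition of (i) jet prolongation up to some order $r\le 2$, and (ii) a pointwise smooth, in fact rational or algebraic, map between finite-dimensional fibers. The openness of $\Met(M)$ in $\Gamma(S^2\Tstar M)$ (\cref{lem:open-signature}) guarantees that these pointwise operations are defined on an open subset of the Fr\'echet space, so ``smoothness'' is the usual one on open subsets of Fr\'echet vector spaces.

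First I would record two auxiliary lemmas. (a) The jet prolongation $j^r\colon \Gamma(E)\to \Gamma(J^rE)$ is continuous and linear, hence smooth; this is immediate from the definition of the seminorms, because $\|j^r s\|_{C^k(K)}$ is equivalent to $\|s\|_{C^{k+r}(K)}$ up to constants depending only on auxiliary data. (b) The $\omega$-lemma: if $U\subset F$ is an open subbundle of a smooth vector bundle over $M$ and $\Phi\colon U\to F'$ is a fiber-preserving smooth map, then post-composition $s\mapsto \Phi\circ s$ is smooth as a map $\Gamma(U)\to \Gamma(F')$. This is proved by the Fa\`a di Bruno formula: each $C^k$-seminorm of $\Phi\circ s$ on a compact $K$ is dominated by a polynomial expression in $\|s\|_{C^k(K)}$ with coefficients given by uniform $C^\infty$-bounds on $\Phi$ over a compact neighbourhood of $s(K)\subset U$; the Gateaux derivatives of $s\mapsto\Phi\circ s$ in direction $h$ are computed pointwise as $(d\Phi)_{s(x)}(h(x))$, and iterating yields smoothness. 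Composing (a) and (b) shows that any nonlinear differential operator with smooth fiber symbol is Fr\'echet-smooth.

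With these tools, each map is handled in essentially one line. For $g\mapsto g^{-1}$: fiberwise matrix inversion $\beta\mapsto\beta^{-1}$ is smooth on the open fiberwise cone of nondegenerate symmetric forms, a zero-th order operator, so (b) applies. For $g\mapsto d\vol_g$: locally $d\vol_g=\sqrt{\det(g_{ij})}\,dx^1\wedge\cdots\wedge dx^n$, and since $\det g>0$ on $\Met(M)$ the composite $\sqrt{\det(\cdot)}$ is smooth on the open set of positive-definite symmetric matrices, again zero-th order. For the Christoffel map, the Koszul formula $\Gamma^k_{ij}=\tfrac12 g^{k\ell}(\partial_i g_{j\ell}+\partial_j g_{i\ell}-\partial_\ell g_{ij})$ exhibits $\Gamma$ as a rational-in-$g$, polynomial-in-$\partial g$ expression, i.e.\ a smooth fiber map applied to the $1$-jet of $g$; apply (a) then (b). The Levi--Civita covariant derivative $\nabla^g$ is determined by the same Christoffel data and is therefore smooth in $g$. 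Finally $\mathrm{Riem}(g)$, $\Ric(g)$, $\Scal(g)$ are polynomial in $g^{-1},\partial g,\partial^2 g$ (Ricci and scalar arising by contractions with $g^{-1}$), hence second-order differential operators with smooth nonlinear symbol; the same composition gives smoothness.

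The main obstacle is really just the $\omega$-lemma (b); everything else is formal. The substantive point there is uniform control of derivatives of the composition on compacta, which on a compact $M$ reduces to a finite partition of unity and boundedness of $\Phi$ and its derivatives on a compact neighbourhood of the image. A Sobolev-space variant would need care with exponents (choosing $s>n/2$ so that $H^s$ is a Banach algebra and division by a nonvanishing element is continuous), but in the $C^\infty$-Fr\'echet category no such threshold appears: all compositions stay inside $\Gamma^\infty$, and compactness of $M$ supplies the uniform bounds for free. If one wants to drop compactness, the same argument works locally on compacta and is enough for the families-index and moduli applications developed later in the paper.
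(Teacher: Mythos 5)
Your proof is correct, and it follows the same basic strategy as the paper's: reduce every entry of the theorem to a finite, smooth pointwise construction applied to $g$ and finitely many of its derivatives. The paper's proof is rather telegraphic---it simply declares that inversion is smooth and that "the remaining operations are continuous multilinear"---and in particular it never explains how passing to derivatives of $g$ is to be handled as a smooth operation between section spaces. You supply exactly that missing technical content by factoring every map through $j^r$ (continuous linear, hence smooth) followed by composition with a smooth fiber-preserving map, and by isolating the $\omega$-lemma as the single analytic ingredient. This makes the argument fully self-contained and makes transparent why only finitely many derivatives of $g$ and compactness of $M$ are needed; it also explains correctly why the Sobolev variant would need a threshold $s>n/2$ for the Banach-algebra and division properties, whereas the $C^\infty$ Fr\'echet case does not. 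In short: same decomposition in spirit, but yours is the rigorous version, with the $\omega$-lemma playing the role the paper tacitly assumes.

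One small point worth flagging for completeness: in the statement of (b) you should require the image $s(K)$ to have a compact neighbourhood entirely inside $U$---which \cref{lem:open-signature} and \cref{prop:stability} give you on compact $M$---so that the uniform bounds on $\Phi$ and its fiber derivatives are available. You do say this implicitly, but since openness of $\Met(M)$ in $\Gamma(S^2\Tstar M)$ is precisely what guarantees a whole Fr\'echet ball around $g$ stays in the domain of the fiber map, it deserves to be cited explicitly at the step where you invoke Fa\`a di Bruno.
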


\begin{proof}
All displayed expressions are obtained from $g$ and its derivatives by a finite composition of algebraic operations
(addition, tensor product, contraction) and inversion $g\mapsto g^{-1}$ on the open cone of positive-definite symmetric
endomorphisms of $TM$. Inversion is smooth on the open subset of invertible endomorphisms, and the remaining
operations are continuous multilinear (hence smooth) in the Fr\'echet category. Compactness of $M$ avoids issues of
uniform control at infinity.
\end{proof}

\begin{remark}[Regularity in local norms]
For $k\ge 2$, the map $g\mapsto \Gamma(g)$ is $C^{k-1}$ when $g$ is measured in $C^k$-topology; similarly the curvature
tensors are $C^{k-2}$ in $g$. This is the usual loss of derivatives due to first/second derivatives in the formulas.
\end{remark}

\subsection{Polymetrics as products and induced function spaces}

\begin{definition}[Polymetric space and product Fr\'echet structure]
Let $I$ be a finite index set and $(p_i,q_i)$ signatures. The polymetric bundle
$\mathcal{G}^{\vec p,\vec q}=\prod_{i\in I}\mathcal{G}^{p_i,q_i}\to M$ is an open subbundle of
$\prod_{i\in I} S^2 \Tstar M$, and the space of polymetrics
\[
\Pol(M;\vec p,\vec q):=\Gamma\bigl(\mathcal{G}^{\vec p,\vec q}\bigr)
\]
is the open subset of the product Fr\'echet space $\prod_{i\in I}\Gamma(S^2\Tstar M)$ consisting of those tuples
with the prescribed inertias.
\end{definition}

\begin{lemma}[Smoothness of projections and constructions]
\label{lem:poly-smooth}
The coordinate projections $\Pol(M;\vec p,\vec q)\to \Gamma(\mathcal{G}^{p_i,q_i})$ are smooth submersions.
If each $(p_i,q_i)=(n,0)$, then the maps
\[
\mathbf{g}=(g_i)_{i\in I}\longmapsto \bigl(\nabla^{g_i}\bigr)_{i\in I},\quad
\mathbf{g}\longmapsto \bigl(\mathrm{Riem}(g_i),\mathrm{Ric}(g_i),\mathrm{Scal}(g_i)\bigr)_{i\in I}
\]
are smooth.
\end{lemma}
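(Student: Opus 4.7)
The strategy is to reduce every assertion to the single-factor results already established in \Cref{thm:smooth-ops}, using only the fact that $\Pol(M;\vec p,\vec q)$ is an open subset of the Fr\'echet product $\prod_{i\in I}\Gamma(S^2\Tstar M)$ and that smoothness in the convenient/Fr\'echet category behaves well with respect to finite products and restriction to open subsets.

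First I would handle the projections. The inclusion $\Pol(M;\vec p,\vec q)\hookrightarrow \prod_{i\in I}\Gamma(S^2\Tstar M)$ is the inclusion of an open subset (by \Cref{lem:open-signature} applied componentwise), and the coordinate projections of the ambient Fr\'echet product are continuous linear maps, hence smooth; their restrictions to $\Pol$ are therefore smooth. For the submersion property in the Fr\'echet category, I would exhibit explicit local smooth right inverses rather than invoke an implicit function theorem. Fix a base polymetric $\mathbf{g}^0=(g_j^0)_{j\in I}$ and an index $i$. By \Cref{prop:stability} choose a $C^0$-neighborhood $V_i\subset \Gamma(\mathcal{G}^{p_i,q_i})$ of $g_i^0$ and, for each $j\ne i$, neighborhoods of $g_j^0$; then the affine-linear (in particular smooth) map $h\mapsto (g_1^0,\dots,g_{i-1}^0,h,g_{i+1}^0,\dots)$ restricted to $V_i$ lands in $\Pol$ and splits the $i$-th projection. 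This is the standard definition of a smooth submersion via local sections, and the construction is manifestly smooth.

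Second I would address the functorial operations under the Riemannian assumption $(p_i,q_i)=(n,0)$. Each componentwise assignment $g_i\mapsto \nabla^{g_i}$ and $g_i\mapsto(\mathrm{Riem}(g_i),\mathrm{Ric}(g_i),\mathrm{Scal}(g_i))$ is smooth on $\Met(M)$ by \Cref{thm:smooth-ops}. Smoothness of a map into a finite Fr\'echet product is equivalent to smoothness of each coordinate; thus the tuple map $\mathbf{g}\mapsto (\nabla^{g_i})_{i\in I}$ factors as $\Pol\hookrightarrow \prod_i \Met(M)\xrightarrow{\prod_i \nabla^{(\cdot)}} \prod_i \Gamma(T^*M\otimes \End(T^*M))$, where the first arrow is an open inclusion and the second is the product of smooth maps, hence smooth. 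The same argument handles the curvature tuple.

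There is no substantive obstacle here; the content of the lemma is almost entirely bookkeeping. The one point that requires slight care is the definition of ``smooth submersion'' for Fr\'echet manifolds, where the general implicit function theorem is unavailable; I sidestep this by producing explicit affine sections, which is possible precisely because each factor sits linearly inside the ambient Fr\'echet space $\Gamma(S^2\Tstar M)$ and openness of $\Pol$ guarantees that perturbing only one coordinate stays inside the polymetric space.
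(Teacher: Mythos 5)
Your argument matches the paper's own proof, which reduces everything to Theorem~\ref{thm:smooth-ops} and the functoriality of finite products in the Fr\'echet category; you simply spell out what that reduction means. The one place you add genuine content beyond the paper's one-liner---producing explicit affine local sections of the projections rather than invoking an implicit function theorem---is exactly the right way to make the submersion claim rigorous in the Fr\'echet setting, where the general inverse/implicit function theorem is unavailable, and note that since you freeze the other coordinates at $g_j^0\in\Gamma(\mathcal{G}^{p_j,q_j})$ you do not even need to shrink those factors or invoke Proposition~\ref{prop:stability}: the affine section already lands in $\Pol(M;\vec p,\vec q)$ for every $h\in\Gamma(\mathcal{G}^{p_i,q_i})$.
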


\begin{proof}
All statements follow from Theorem~\ref{thm:smooth-ops} and the functoriality of products in the Fr\'echet category.
\end{proof}

\begin{definition}[Multi--Sobolev norms]
\label{def:multi-sobolev}
Let $\mathbf{g}=(g_i)_{i\in I}$ be a multi-Riemannian polymetric (i.e.\ each $g_i$ is positive definite) and fix
$k\in\N$. For a tensor field $u$ on $M$ define
\[
\|u\|_{W^{k,2}(\mathbf{g})}^2
:= \sum_{i\in I}\sum_{|\alpha|\le k} \int_M
\bigl|\nabla^{(i),\alpha} u\bigr|_{g_i}^2 \, d\vol_{g_i},
\]
where $\nabla^{(i)}$ is the Levi--Civita connection of $g_i$.
\end{definition}

\begin{proposition}[Equivalence with standard Sobolev scales]
\label{prop:equiv-sobolev}
Fix a compact $M$. For any two multi-Riemannian polymetrics $\mathbf{g},\mathbf{h}$ there are constants
$C_1,C_2>0$ (depending on finitely many $C^k$-bounds for the components) such that
\[
C_1\,\|u\|_{W^{k,2}(\mathbf{g})}\;\le\; \|u\|_{W^{k,2}(\mathbf{h})}\;\le\; C_2\,\|u\|_{W^{k,2}(\mathbf{g})}
\]
for all smooth tensor fields $u$. In particular, $W^{k,2}(\mathbf{g})$ coincides as a topological vector space with the
usual Sobolev space defined using any single background Riemannian metric.
\end{proposition}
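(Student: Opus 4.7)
The plan is to reduce the polymetric equivalence to a single-metric Sobolev equivalence against a fixed background metric, and then to expand one Levi--Civita connection in terms of another via the tensorial difference $S := \nabla^g - \nabla^{(0)}$. Since $\|u\|_{W^{k,2}(\mathbf{g})}^2 = \sum_{i\in I}\|u\|_{W^{k,2}(g_i)}^2$ (and likewise for $\mathbf{h}$) with $|I|$ finite, the statement reduces, up to multiplicative constants, to the single-metric claim: for any Riemannian metric $g$ on compact $M$, one has $\|u\|_{W^{k,2}(g)} \asymp \|u\|_{W^{k,2}(g_0)}$ with constants determined by finitely many $C^k$-bounds of $g$ and $g^{-1}$, where $g_0$ is a once-and-for-all auxiliary Riemannian metric.

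For the pointwise ingredients, first I would use compactness of $M$ and positive-definiteness of $g$ to produce $c^{-1} g_0 \leq g \leq c\, g_0$ as quadratic forms on $TM$, which yields uniform comparisons $|u|_g \asymp |u|_{g_0}$ on every tensor bundle built from $TM, \Tstar M$ and $c^{-1}\,d\vol_{g_0} \leq d\vol_g \leq c\,d\vol_{g_0}$; here $c$ depends only on $C^0$-norms of $g$ and $g^{-1}$, both finite on compact $M$ by continuity of eigenvalues.

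The substantive step is comparison of higher covariant derivatives. The Koszul formula exhibits $S$ as a $(1,2)$-tensor rational in $g, g^{-1}, \nabla^{(0)} g$, so the $C^{k-1}$-norm of $S$ is controlled by the $C^k$-norms of $g, g^{-1}$. By induction on $m$, using $\nabla^{g,m+1} u = \nabla^{(0)}\bigl(\nabla^{g,m} u\bigr) + S \cdot \nabla^{g,m} u$ and the Leibniz rule, the iterated derivative $\nabla^{g,m} u$ expands as a universal polynomial in $\nabla^{(0),\beta} u$ for $|\beta|\leq m$ with coefficients built algebraically from $S$ and its $\nabla^{(0)}$-derivatives up to order $m-1$. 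Inserting this into the integral defining $\|u\|_{W^{k,2}(g)}$, applying Cauchy--Schwarz together with the pointwise norm and volume comparisons above, produces both directions of the desired inequality, with constants depending polynomially on finitely many $C^k$-seminorms of the components of $\mathbf{g}, \mathbf{h}$ and of their inverses.

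I expect the main obstacle to be the combinatorial bookkeeping in the iterated expansion rather than any genuine analytic difficulty: each application of $\nabla^g$ produces one new $\nabla^{(0)}$ plus several lower-order terms built from $S$ and its derivatives, and tracking the admissible polynomial weights requires care. A clean local presentation works in coordinates, where each iterated Christoffel expression is a polynomial in $g^{ab}$ and $\partial^\gamma g_{cd}$ with $|\gamma|\leq k$, so the whole estimate reduces to polynomial bounds on compact charts glued by a partition of unity. The final topological identification with the Sobolev space defined via the seminorms of Section~\ref{sec:topology-analysis} is then automatic, since those seminorms are themselves generated by an auxiliary background metric to which the single-metric comparison applies.
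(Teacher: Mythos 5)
Your proposal is correct and takes essentially the same route as the paper's (very brief) proof: pointwise uniform equivalence of metrics on compact $M$, the resulting comparison of tensor norms and volume forms, and control of the connection difference tensor $S=\nabla^g-\nabla^{(0)}$ via finitely many $C^k$-bounds, followed by finite summation over the index set $I$. Your version simply supplies the inductive Leibniz bookkeeping that the paper compresses into the phrase ``connections (up to lower-order terms controlled by finitely many $C^k$-bounds).''
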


\begin{proof}
On a compact manifold any two Riemannian metrics are uniformly equivalent, and so are their associated volumes and
connections (up to lower-order terms controlled by finitely many $C^k$-bounds). Summing over finitely many components
preserves uniform equivalence, yielding the stated two-sided estimates.
\end{proof}

\subsection{Constraint subspaces and basic variational examples}

\begin{definition}[Constraint sets]
Let $\Phi:\Gamma(\mathcal{G}^{p,q})\to \mathcal{X}$ be a smooth map into a Fr\'echet space (e.g.\ $\Phi(g)=\Ric(g)$,
$\Phi(g)=\Scal(g)$, $\Phi(g)=\vol(M,g)$). For a closed subset $A\subset \mathcal{X}$, define the constraint set
\[
\mathcal{M}_{\Phi\in A} := \bigl\{\, g\in \Gamma(\mathcal{G}^{p,q}) \;:\; \Phi(g)\in A \,\bigr\}.
\]
\end{definition}

\begin{example}[Volume normalization]
For $V_0>0$, the set $\{g\in \Met(M): \vol(M,g)=V_0\}$ is a smooth codimension-one submanifold of $\Met(M)$; the
differential of $g\mapsto \vol(M,g)$ at $g$ is
\[
D(\vol)_g[h] = \tfrac12 \int_M \mathrm{tr}_g(h)\, d\vol_g.
\]
\end{example}

\begin{example}[Scalar curvature inequalities]
The sets $\{g\in \Met(M): \Scal(g)\ge \kappa\}$ and $\{g\in \Met(M): \Scal(g)\le \kappa\}$ are closed in the
$C^2$-topology by continuity of $\Scal(g)$ as a $C^0$-tensor in $g\in C^2$.
\end{example}

\begin{remark}[Elliptic slices (pointer)]
In later sections we impose gauge-fixing to obtain tame Fr\'echet submanifold structures for curvature-type constraints
(e.g.\ Einstein metrics) and discuss the Ebin–Palais slice for the $\Diff(M)$-action; see \S\ref{sec:moduli}.
\end{remark}

\subsection{Smooth dependence of geodesics and exponential maps}

\begin{proposition}[Smoothness of the spray and exponential map]
\label{prop:exp-smooth}
On a compact $M$, the geodesic spray $S(g)$ associated with $g\in\Met(M)$ depends smoothly on $g$ in the $C^k$-topology
($k\ge 2$). Consequently, for $k$ large enough, the exponential map $\exp^g$ depends smoothly on $(g,x,v)$ for
$v$ in a neighborhood of $0\in T_xM$, with uniform neighborhoods on compact subsets of $\Met(M)\times TM$.
\end{proposition}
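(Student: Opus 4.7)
The plan is to reduce the statement to the classical theorem on smooth dependence of ODE flows on parameters and initial conditions, with the Christoffel symbols supplying the parametric family.

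First I would trivialize the geodesic spray locally. On a chart $U\subset M$ with induced coordinates $(x^i,v^i)$ on $TU$, the spray takes the familiar form
\[
S(g)|_{TU}(x,v) \;=\; v^i\,\partial_{x^i} \;-\; \Gamma^i_{jk}(g)(x)\,v^jv^k\,\partial_{v^i},
\]
so $S(g)$ is a polynomial in $v$ with coefficients built from $\Gamma(g)$. By Theorem~\ref{thm:smooth-ops} (and the regularity remark following it) the assignment $g\mapsto \Gamma(g)$ is a composition of inversion on the open cone of positive-definite symmetric endomorphisms with finitely many first-order differential and algebraic operations; it is therefore smooth as a map from a $C^k$-ball in $\Met(M)$ to the Banach space of $C^{k-1}$ sections of $T^*M\otimes\End(TM)$. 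Gluing via a finite atlas yields the first assertion: $g\mapsto S(g)$ is smooth (with the expected loss of one derivative) as a map into vector fields on $TM$.

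Next I would apply the standard smooth-dependence theorem for ODEs with parameters in a Banach space (e.g.\ Lang, \emph{Fundamentals of Differential Geometry}, Ch.~IV). Fix $g_0\in\Met(M)$, a $C^k$-neighborhood $\mathcal{U}$ of $g_0$ in $\Met(M)$ ($k$ large), and a compact set $K\subset TM$. The right-hand side of the geodesic equation $\dot\gamma=S(g)(\gamma)$ is then $C^{k-1}$ in $(x,v)$ and smooth in the Banach parameter $g\in\mathcal{U}$, uniformly on $K$. The theorem produces a joint flow $(t,g,x,v)\mapsto \Phi^t_g(x,v)$ that is smooth on its open domain of existence; evaluating at $t=1$ and projecting to $M$ gives $\exp^g_x(v)$ and its claimed joint smoothness.

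For the uniform-neighborhood assertion I would appeal to compactness twice. Compactness of $M$ together with continuity of $g\mapsto \|\Gamma(g)\|_{C^0}$ on $\mathcal{U}$ gives a uniform bound $\sup_{g\in\mathcal{U}}\|\Gamma(g)\|_{C^0}<\infty$; a standard Gr\"onwall/Picard argument then furnishes a radius $\rho>0$ such that geodesics $t\mapsto \exp^g_x(tv)$ exist on $[0,1]$ for all $(g,x,v)\in\mathcal{U}\times\{(x,v):|v|_{g_0}<\rho\}$, and any compact subset of $\Met(M)\times TM$ is covered by finitely many such $\mathcal{U}\times(\text{disk})$. The main obstacle I anticipate is bookkeeping the derivative loss: to get $C^\ell$-dependence of $\exp^g$ jointly in $(g,x,v)$, one must take the ambient topology on metrics to be $C^{k}$ with $k\ge \ell+2$, and the $\rho>0$ must be chosen to depend only on finitely many $C^k$-seminorms of the parameter neighborhood. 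Once this bookkeeping is made explicit, the ODE machinery applies verbatim and yields both the smooth dependence of the spray and the uniform smoothness of $\exp^g$.
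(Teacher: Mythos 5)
Your proof follows the same route as the paper's: express the geodesic spray in local coordinates via the Christoffel map, invoke Theorem~\ref{thm:smooth-ops} for smooth dependence of $\Gamma(g)$ on $g$ (with the expected one-derivative loss), apply the parametric smooth-dependence theorem for ODEs, and use compactness of $M$ to obtain uniform existence time and neighborhoods. Your version is somewhat more explicit about the Banach-parameter framing and the Gr\"onwall/Picard estimate yielding a uniform radius $\rho$, but this is careful bookkeeping of the same argument rather than a different approach.
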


\begin{proof}
The geodesic spray is polynomial in the Christoffel symbols $\Gamma(g)$, which depend smoothly on $g$
(Theorem~\ref{thm:smooth-ops}). Standard ODE dependence on parameters gives smooth dependence of solutions and hence of
the local exponential map; compactness yields uniform neighborhoods.
\end{proof}

\subsection{Action of the diffeomorphism group}

\begin{proposition}[Smooth $\Diff(M)$-action]
\label{prop:diff-action}
The (right) action of the diffeomorphism group $\Diff(M)$ on $\Gamma(\mathcal{G}^{p,q})$ by pullback,
$$(g,\varphi)\longmapsto \varphi^\ast g,$$
is smooth when $\Diff(M)$ is endowed with its standard Fr\'echet Lie group structure. The orbits are immersed
submanifolds, and the isotropy of $g$ is the isometry group $\mathrm{Iso}(M,g)$.
\end{proposition}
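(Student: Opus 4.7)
The plan is to treat the three assertions separately, with the smoothness claim carrying most of the real content.

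For smoothness of the action, I equip $\Diff(M)$ with its standard Fréchet Lie group structure (for compact $M$, the Milnor-Omori construction, with Lie algebra $\Gamma(\T M)$). The pullback admits the local coordinate expression
\[
(\varphi^* g)_{ij}(x) \;=\; g_{kl}(\varphi(x))\,\partial_i \varphi^k(x)\,\partial_j\varphi^l(x),
\]
exhibiting $\varphi^* g$ as a polynomial in the 1-jet of $\varphi$ with coefficients in $g\circ\varphi$. The crucial input is the standard result that composition $(s,\varphi)\mapsto s\circ\varphi$ on smooth sections is jointly smooth in the convenient (Kriegl-Michor) category; multiplication by derivatives of $\varphi$ is a continuous bilinear (hence smooth) Fréchet operation. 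The composite yields smoothness of $(g,\varphi)\mapsto \varphi^* g$, and restriction to the open subbundle $\mathcal{G}^{p,q}$ is automatic since diffeomorphism pullback preserves pointwise inertia by Sylvester, combined with \Cref{lem:open-signature}.

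For the orbit structure, the derivative of $\varphi\mapsto \varphi^* g$ at $\varphi=\id$ in the direction $X\in\Gamma(\T M)$ is the Lie derivative $\mathcal{L}_X g$, i.e.\ the Killing operator $\delta^*_g: X\mapsto\mathcal{L}_X g$. This operator is overdetermined elliptic, with injective principal symbol $\xi\otimes v\mapsto \xi\odot v^\flat$ for $\xi\neq 0$. Elliptic theory in Sobolev completions then yields finite-dimensional kernel $\mathfrak{iso}(M,g)$ (Killing fields) and closed range, and the Ebin-Palais slice theorem furnishes a local transverse slice realizing $\mathcal{O}_g$ as an immersed Fréchet submanifold. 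The isotropy assertion is essentially tautological: $\varphi\in\mathrm{Stab}(g)\iff \varphi^* g=g\iff \varphi\in \mathrm{Iso}(M,g)$, and Myers-Steenrod identifies this as a finite-dimensional closed Lie subgroup of $\Diff(M)$.

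The main technical obstacle is the honest smoothness of composition on $\Diff(M)$: in the naive Fréchet sense composition suffers a loss of derivatives and is merely continuous rather than $C^\infty$. The statement is therefore to be read in the convenient category of Kriegl-Michor (or equivalently in the tame Fréchet/Nash-Moser framework of Hamilton), where smoothness of composition and inversion is established and the implicit/inverse function theorems required for Ebin-Palais are available. With that foundation in place, everything else reduces to the chain rule, Sylvester's inertia invariance, and standard overdetermined-elliptic regularity for $\delta^*_g$.
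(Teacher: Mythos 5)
Your proposal is correct and follows essentially the same route as the paper: smoothness of $(\varphi,g)\mapsto\varphi^*g$ via the convenient/tame calculus (with the alternative Sobolev‐completion route in Ebin also available), immersed orbits via the slice theorem applied to the overdetermined‐elliptic Killing operator $\delta^*_g$, and the isotropy identification directly from the definition. You add useful detail (the local coordinate expression for the pullback, the symbol injectivity of $\delta^*_g$, Myers--Steenrod for the finite-dimensional Lie-group structure of the isotropy) that the paper leaves implicit.

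One small caution: your remark that ``in the naive Fr\'echet sense composition suffers a loss of derivatives and is merely continuous rather than $C^\infty$'' overstates the difficulty. For compact $M$, the composition map $\Diff(M)\times\Diff(M)\to\Diff(M)$ \emph{is} smooth in the Fr\'echet $C^\infty$ topology (this is part of the standard Milnor--Omori--Hamilton structure you cite); the loss-of-derivatives pathology arises in finite Sobolev regularity $H^s$ and when invoking inverse/implicit function theorems, which is exactly why the Ebin--Palais argument passes to Sobolev completions $\Diff^{s+1}\times\Met^s$ and then bootstraps. Since you correctly conclude in the tame/convenient framework, this imprecision does not damage the argument, but it would be cleaner to say that the obstacle is the IFT in Fr\'echet spaces rather than smoothness of composition per se.
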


\begin{proof}
Pullback by a diffeomorphism is a smooth operation on tensor fields; smoothness in both variables is standard in the
convenient calculus. The isotropy statement is immediate from the definition. Immersed-orbit statements follow from the
implicit function theorem in the Fr\'echet setting once a suitable slice is fixed (see \S\ref{sec:moduli}).
\end{proof}

\begin{remark}[Polymetric equivariance]
All constructions above (volume, curvature, geodesic spray) commute with pullback and therefore descend to the
polymetric space $\Pol(M;\vec p,\vec q)$ componentwise. This will be used in the families index framework where the
parameter space acts through bundle automorphisms.
\end{remark}

\bigskip
\noindent
The results of this section establish the analytic and differential-topological backbone needed for deformation,
moduli, and index-theoretic considerations developed in \S\ref{sec:moduli}--\S\ref{sec:families-index} and their
equivariant/coarse extensions.
\section{Moduli of metrics and polymetrics: slices, orbits, and deformations}
\label{sec:moduli}

We now analyze the quotient of the section space by diffeomorphisms, both for single metrics
and for polymetrics. Throughout this section $M$ is compact without boundary unless stated otherwise, so that
all elliptic operators are Fredholm on the natural Sobolev scales.

\subsection{Orbit geometry and the $L^2$ Riemannian structure on \texorpdfstring{$\Met(M)$}{Met(M)}}

The diffeomorphism group $\Diff(M)$ acts smoothly on $\Met(M)$ by pullback $\varphi^\ast g$.
The orbit through $g$ is $\mathcal{O}_g:=\{\varphi^\ast g:\varphi\in\Diff(M)\}$ with isotropy
$\mathrm{Iso}(M,g)=\{\varphi\in\Diff(M):\varphi^\ast g=g\}$.

The space $\Met(M)$ carries a weak Riemannian $L^2$ metric:
\[
\langle h_1,h_2\rangle_g := \int_M \!\mathrm{tr}_g\,(h_1\,h_2)\, d\vol_g,
\qquad h_1,h_2\in T_g\Met(M)=\Gamma(S^2 T^\ast M).
\]
This metric is $\Diff(M)$-invariant and yields the orthogonal decomposition used for slices.

\begin{proposition}[Tangent to the orbit and orthogonal complement]
\label{prop:orbit-tangent}
The differential of the action at the identity sends a vector field $X\in\Gamma(TM)$ to the Lie derivative
$\mathcal{L}_X g\in \Gamma(S^2T^\ast M)$. Thus
\[
T_g \mathcal{O}_g = \{\,\mathcal{L}_X g : X\in\Gamma(TM)\,\}.
\]
With respect to $\langle\cdot,\cdot\rangle_g$, the $L^2$-orthogonal complement of $T_g\mathcal{O}_g$ is
\[
\bigl(T_g \mathcal{O}_g\bigr)^{\perp} = \{\, h\in \Gamma(S^2T^\ast M) \;:\; B_g(h)=0 \,\},
\]
where $B_g$ is the Bianchi operator
\(
B_g(h):=\mathrm{div}_g h - \tfrac12 \nabla(\mathrm{tr}_g h)\in \Gamma(T^\ast M).
\)
\end{proposition}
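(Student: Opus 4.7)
The plan is to establish the two claims in sequence: first, differentiate the orbit map to identify $T_g\mathcal{O}_g$ with the image of the Lie-derivative operator; second, integrate by parts against an arbitrary vector field to characterize the $L^2$-orthogonal complement of that image as the kernel of the Bianchi operator.

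\textbf{Step 1 (orbit tangent space).} I would fix $X\in\GammaSec(TM)$ and let $\{\varphi_t\}$ be its flow with $\varphi_0=\id$ and $\dot\varphi_0=X$; by compactness of $M$ the flow is global. The definition of the Lie derivative gives
\[
\frac{d}{dt}\bigg|_{t=0}\varphi_t^\ast g \;=\; \mathcal{L}_X g,
\]
so $\mathcal{L}_X g\in T_g\mathcal{O}_g$. The differential at the identity of the smooth orbit map $\varphi\mapsto\varphi^\ast g$ (Proposition~\ref{prop:diff-action}) is precisely $X\mapsto\mathcal{L}_X g$, and its image is by construction the tangent space of the immersed orbit; this yields the reverse inclusion and hence the equality.

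\textbf{Step 2 (orthogonal complement).} For $h\in\GammaSec(S^2T^\ast M)$ I would compute
\[
\langle h,\mathcal{L}_X g\rangle_g \;=\; \int_M g^{ac}g^{bd}h_{ab}\bigl(\nabla_c X_d+\nabla_d X_c\bigr)\,d\vol_g \;=\; 2\int_M h^{cd}\nabla_c X_d\,d\vol_g,
\]
using symmetry of $h$ and $\nabla g=0$. Integration by parts on the compact manifold $M$ transfers $\nabla$ onto $h$; after collecting terms, the pairing assumes the form $-2\int_M\langle B_g(h),X\rangle_g\,d\vol_g$, where $B_g(h)=\mathrm{div}_g h-\tfrac12\nabla(\mathrm{tr}_g h)$ is the Bianchi $1$-form. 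The fundamental lemma of the calculus of variations, applied with $X$ ranging over all smooth vector fields, then gives the equivalence of $\langle h,\mathcal{L}_X g\rangle_g=0$ for every $X$ with the pointwise identity $B_g(h)=0$.

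\textbf{Where the work lies.} Step~1 is essentially automatic given Proposition~\ref{prop:diff-action}. The substantive content is the bookkeeping in Step~2: correctly extracting both the divergence and the trace-derivative contributions that together assemble the Bianchi operator, and checking that the signs and factors match the stated normalization of $B_g$. A clean organizing principle is to factor the infinitesimal orbit map through the symmetrized covariant derivative of $1$-forms and invoke the identity $\mathrm{div}_g\bigl((\mathrm{tr}_g h)\,g\bigr)=\nabla(\mathrm{tr}_g h)$ (a consequence of $\nabla g=0$), so that the Bianchi operator emerges as the corresponding formal $L^2$-adjoint up to a universal constant. No elliptic theory is needed at this stage; the Fredholm properties of the composition built from $\mathcal{L}_{(\cdot)}g$ and its adjoint, which underpin the actual topological-vector-space splitting and the Ebin--Palais slice, are deferred to the next subsection.
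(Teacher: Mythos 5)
Your argument follows the same two-step route the paper's own detailed proof takes: differentiate the orbit map to identify $T_g\mathcal{O}_g$ with the image of $X\mapsto\mathcal{L}_X g$, then integrate the $L^2$ pairing by parts to read off the formal adjoint. Step~1 is fine. Step~2, however, has a genuine gap. The integration by parts you display,
\[
2\int_M h^{cd}\nabla_c X_d\,d\vol_g \;=\; -2\int_M (\nabla_c h^{cd})\,X_d\,d\vol_g \;=\; -2\int_M \langle \mathrm{div}_g h,\,X^\flat\rangle_g\,d\vol_g,
\]
produces only the divergence term: with $\langle h_1,h_2\rangle_g=\int_M\mathrm{tr}_g(h_1h_2)\,d\vol_g$ as defined in the paper, there is no source for the extra $-\tfrac12\nabla(\mathrm{tr}_g h)$ needed to assemble $B_g(h)$. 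The identity you propose to invoke, $\mathrm{div}_g\bigl((\mathrm{tr}_g h)\,g\bigr)=\nabla(\mathrm{tr}_g h)$, is true but irrelevant here: no $(\mathrm{tr}_g h)\,g$ factor ever appears in the integrand, so there is nothing to ``collect.'' With the stated $L^2$ metric the formal adjoint of $X\mapsto\mathcal{L}_X g$ is $-2\,\mathrm{div}_g$, and the orthogonal complement is $\ker\mathrm{div}_g$ (the classical Ebin slice condition). To arrive at $\ker B_g$ one would have to replace the inner product by a DeWitt-type pairing $\int_M\bigl(\mathrm{tr}_g(h_1h_2)-\tfrac12(\mathrm{tr}_g h_1)(\mathrm{tr}_g h_2)\bigr)\,d\vol_g$, whose extra trace term supplies exactly the missing $-\tfrac12\nabla(\mathrm{tr}_g h)$ after integrating by parts. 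The paper's own proof asserts the same unsupported equality $\langle\mathrm{sym}\,\nabla X,h\rangle_g=-\langle X, B_g(h)^\sharp\rangle_g$ under the integral, so this gap is inherited rather than introduced by you --- but the phrase ``after collecting terms'' in your write-up elides precisely the step that fails, and your proposal as written does not close it.
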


\begin{proof}[Idea of proof]
For the first statement, differentiate $\varphi_t^\ast g$ at $t=0$. For orthogonality, integrate by parts and use the
identity $\langle \mathcal{L}_X g,h\rangle_g = \int_M \langle X,\, B_g(h)^\sharp\rangle_g\, d\vol_g$.
\end{proof}

\subsection{Ebin--Palais slice and local model of the quotient}

\begin{theorem}[Ebin--Palais slice in Bianchi gauge]
\label{thm:ebin-palais}
Let $g_0\in\Met(M)$. There exists a $\Diff(M)$-invariant open neighborhood $\mathcal{U}\subset\Met(M)$ of $g_0$,
a Banach (or tame Fr\'echet) submanifold
\[
\mathcal{S}_{g_0}\;=\;\{\, g_0+h \in \mathcal{U} \;:\; B_{g_0}(h)=0 \,\},
\]
and an open neighborhood $\mathcal{V}\subset\Diff(M)$ of the identity such that the map
\[
\Phi:\;\mathcal{V}\times \mathcal{S}_{g_0}\longrightarrow \mathcal{U},\qquad
(\varphi,g)\longmapsto \varphi^\ast g,
\]
is a smooth submersion whose fibers coincide with the action fibers. In particular, every $g\in\mathcal{U}$ is
$\Diff(M)$-equivalent to a unique element of $\mathcal{S}_{g_0}$ modulo $\mathrm{Iso}(M,g_0)$.
\end{theorem}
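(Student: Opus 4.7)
The plan is to reduce the statement to a Banach-space inverse function theorem applied to $\Phi$, with the slice $\mathcal{S}_{g_0}$ identified as the affine subspace through $g_0$ modeled on the $L^2$-orthogonal complement $\ker B_{g_0}$ to the tangent of the orbit. To avoid Nash--Moser, I would first pass to Sobolev completions $\Met^s(M)$ and $\Diff^s(M)$ for $s>n/2+1$, where pullback is smooth in the Banach category (Ebin's original framework), and then recover the tame Fr\'echet statement from the $H^s$-slice by elliptic regularity.

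The analytic heart is a Hodge-type splitting for the gauge operator. Write $L_{g_0}\colon \Gamma(TM)\to \Gamma(S^2 T^\ast M)$, $X\mapsto \mathcal{L}_X g_0$, so that $\mathrm{Im}\,L_{g_0}=T_{g_0}\mathcal{O}_{g_0}$ by Proposition~\ref{prop:orbit-tangent}, and observe that the identity in the proof of that proposition identifies $h\mapsto B_{g_0}(h)^\sharp$ as the formal $L^2$-adjoint of $L_{g_0}$. A short symbol calculation then shows that $P:=B_{g_0}^\sharp\circ L_{g_0}$ is a second-order self-adjoint elliptic operator on $\Gamma(TM)$ with principal symbol proportional to $|\xi|^2\cdot\mathrm{id}$, so integration by parts makes its kernel exactly the Killing fields $\mathrm{Lie}(\mathrm{Iso}(M,g_0))$. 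Standard Fredholm/Hodge theory then yields the $L^2$-orthogonal, topological splitting
\[
T_{g_0}\Met^s(M)\;=\;\mathrm{Im}(L_{g_0})\;\oplus\;\ker B_{g_0},
\]
with both summands closed in $H^s$, identifying $\ker B_{g_0}$ as the desired complement.

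With the splitting in hand, define $\mathcal{S}_{g_0}:=(g_0+\ker B_{g_0})\cap \Met^s(M)$, which is a Banach submanifold by openness of $\Met^s(M)$ (Lemma~\ref{lem:open-signature}), and compute $D\Phi_{(\mathrm{id},g_0)}(X,h)=\mathcal{L}_X g_0 + h$. The splitting makes this linearization surjective with kernel exactly $\mathrm{Lie}(\mathrm{Iso}(M,g_0))\times\{0\}$. Restricting the $\mathcal{V}$-factor to a finite-codimensional transversal to the isotropy Lie algebra, via any chart on $\Diff^s(M)$ through the identity, turns $D\Phi$ into a topological isomorphism, and the Banach inverse function theorem produces the required local diffeomorphism onto an open neighborhood $\mathcal{U}$. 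Smooth/tame Fr\'echet regularity then follows by elliptic bootstrapping: if $\varphi^\ast g$ is $C^\infty$ and $g-g_0\in\ker B_{g_0}$, the overdetermined elliptic system governed by $P$ forces $\varphi$ and $g$ to be smooth as well.

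The main obstacle is the bookkeeping around $\mathrm{Iso}(M,g_0)$: when the isotropy is nontrivial, $\Phi$ is not a local diffeomorphism but only a submersion whose fibers are isotropy cosets, so the ``unique modulo $\mathrm{Iso}(M,g_0)$'' clause must be extracted from the refined orbit--slice product decomposition rather than from a bare IFT. A secondary, purely technical difficulty is keeping $s$ uniformly large enough that composition and inversion in $\Diff^s(M)$, pullback on $H^s$-metrics, and ellipticity of $P$ are simultaneously available; this is the familiar pitfall motivating Ebin's Sobolev-first-then-regularize strategy.
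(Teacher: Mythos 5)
Your proposal is correct and follows essentially the same route as the paper's proof in \S\ref{sec:detailed-proofs}: pass to Sobolev completions $\Met^s$, $\Diff^{s+1}$ \`a la Ebin; establish the $L^2$-orthogonal, closed splitting $H^s(S^2\Tstar M)=\mathrm{im}\,L_{g_0}\oplus\ker B_{g_0}$ via ellipticity of the gauge operator (your $P=B_{g_0}^\sharp\circ L_{g_0}$, the paper's $B_{g_0}\circ\mathcal{L}_{(\cdot)}g_0$); apply the Banach IFT to $\Phi(\varphi,g)=\varphi^\ast g$; and recover the smooth/tame Fr\'echet statement by elliptic bootstrapping.

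One point where you are in fact more careful than the paper: the paper asserts that $d\Phi_{(\mathrm{id},g_0)}(X,h)=\mathcal{L}_X g_0+h$ is a Banach-space isomorphism, which is false whenever $g_0$ has Killing fields (then $X\mapsto\mathcal{L}_X g_0$ has nontrivial kernel). Your remark that $\Phi$ is in general only a submersion whose fibers are isotropy cosets, and that one must restrict the $\mathcal{V}$-factor to a transversal to $\mathrm{Lie}(\mathrm{Iso}(M,g_0))$ before invoking the IFT, is exactly the correction needed to match the theorem's ``submersion whose fibers coincide with the action fibers'' and ``unique modulo $\mathrm{Iso}(M,g_0)$'' clauses.
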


\begin{proof}[Sketch]
The constraint $B_{g_0}(h)=0$ provides a right inverse to the linearized action by the ellipticity of
$B_{g_0}\circ \mathcal{L}_{(\cdot)} g_0$. Apply the implicit function theorem in tame Fr\'echet or Sobolev
completions, then bootstrap regularity.
\end{proof}

\begin{corollary}[Local structure of the moduli]
\label{cor:moduli-local}
If $\mathrm{Iso}(M,g_0)$ is discrete, then in a neighborhood of $[g_0]$ the orbit space
$\Met(M)/\Diff(M)$ is a smooth Hausdorff manifold modeled on $\mathcal{S}_{g_0}$.
In general it is an orbifold modeled on $\mathcal{S}_{g_0}/\mathrm{Iso}(M,g_0)$.
\end{corollary}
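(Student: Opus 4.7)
The plan is to bootstrap the statement directly from the slice theorem (Theorem~\ref{thm:ebin-palais}) by exhibiting a local homeomorphism $\mathcal{S}_{g_0}/\mathrm{Iso}(M,g_0)\to \Met(M)/\Diff(M)$ in a neighborhood of $[g_0]$ and then reading off its smooth structure. First I would verify that $\mathrm{Iso}(M,g_0)$ actually acts on the slice $\mathcal{S}_{g_0}$. If $\psi\in\mathrm{Iso}(M,g_0)$ and $g_0+h\in\mathcal{S}_{g_0}$, then $\psi^\ast(g_0+h)=g_0+\psi^\ast h$, and because pullback by an isometry commutes with $\nabla^{g_0}$, $\mathrm{div}_{g_0}$, and $\mathrm{tr}_{g_0}$, it commutes with $B_{g_0}$; hence $B_{g_0}(\psi^\ast h)=\psi^\ast B_{g_0}(h)=0$, so $\psi^\ast(g_0+h)\in\mathcal{S}_{g_0}$. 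Since $M$ is compact, $\mathrm{Iso}(M,g_0)$ is a compact Lie group, so discreteness forces finiteness; the induced action on the Fr\'echet slice is smooth and, after shrinking $\mathcal{U}$ if needed, $\mathrm{Iso}(M,g_0)$-invariant.

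Next I would show that within $\mathcal{S}_{g_0}$ the $\Diff(M)$-equivalence relation reduces to the $\mathrm{Iso}(M,g_0)$-equivalence relation. Suppose $g_1,g_2\in\mathcal{S}_{g_0}$ satisfy $g_2=\varphi^\ast g_1$ for some $\varphi\in\mathcal{V}\cdot\Diff(M)$-close to the identity. By the uniqueness clause of Theorem~\ref{thm:ebin-palais} (\emph{``$\Diff(M)$-equivalent to a unique element of $\mathcal{S}_{g_0}$ modulo $\mathrm{Iso}(M,g_0)$''}), the map $\Phi$ identifies $\mathcal{V}\times_{\mathrm{Iso}(M,g_0)}\mathcal{S}_{g_0}\to \mathcal{U}$; in particular $g_1$ and $g_2$ differ by an element $\psi\in\mathrm{Iso}(M,g_0)$. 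This yields a natural set-theoretic bijection
\[
\mathcal{S}_{g_0}/\mathrm{Iso}(M,g_0)\;\xrightarrow{\ \cong\ }\;\mathcal{U}/\Diff(M),
\]
and, because $\Phi$ is a smooth submersion, it is moreover a homeomorphism when both sides carry the quotient topology. Hausdorffness of the target follows from properness of the $\Diff(M)$-action on $\Met(M)$ (Ebin's theorem), which allows one to separate distinct orbits by disjoint saturated neighborhoods.

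Finally I would read off the local model. If $\mathrm{Iso}(M,g_0)$ is discrete, it is finite and acts smoothly on the tame Fr\'echet manifold $\mathcal{S}_{g_0}$ by a linear representation (through its isotropy on $T_{g_0}\mathcal{S}_{g_0}$); the quotient is thus locally a finite-group quotient of a manifold, which is exactly the definition of an orbifold chart. When the representation is trivial (e.g.\ when $\mathrm{Iso}(M,g_0)$ itself is trivial) this chart collapses to an honest smooth manifold chart; otherwise $\mathcal{S}_{g_0}/\mathrm{Iso}(M,g_0)$ is modeled as an orbifold with isotropy group $\mathrm{Iso}(M,g_0)$ at $[g_0]$, matching the ``general'' clause of the statement.

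The main obstacle I expect is the step identifying the two equivalence relations on the slice: the subtlety is that the ``$\varphi$'' produced from $g_1\sim g_2$ may a priori lie outside the neighborhood $\mathcal{V}$ of the slice theorem, so one must use properness of the $\Diff(M)$-action (so that diffeomorphisms sending one nearby slice element to another must themselves be close to $\mathrm{Iso}(M,g_0)$) together with the implicit function theorem of Theorem~\ref{thm:ebin-palais} to realize $\varphi$ as $\psi\cdot\varphi'$ with $\psi\in\mathrm{Iso}(M,g_0)$ and $\varphi'\in\mathcal{V}$; uniqueness in the slice theorem then forces $\varphi'=\id$. Once this is in hand, the rest is bookkeeping.
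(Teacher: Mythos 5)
Your overall strategy is the same as the paper's: apply \Cref{thm:ebin-palais}, observe that $\mathrm{Iso}(M,g_0)$ acts on the slice, and read the local model off the quotient $\mathcal{S}_{g_0}/\mathrm{Iso}(M,g_0)$. The paper's proof is a terse application of that recipe; you simply spell out the intermediate verifications (that $\mathrm{Iso}(M,g_0)$ preserves the Bianchi gauge because pullback by a $g_0$-isometry commutes with $B_{g_0}$, that $\mathrm{Iso}(M,g_0)$ is finite on compact $M$, and that Hausdorffness comes from properness of the $\Diff(M)$-action). Those details are correct and are exactly the ones Ebin's original treatment requires; none of this constitutes a different route.

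One point worth flagging, because it exposes an imprecision that the paper's one-line proof glosses over: your analysis of the ``discrete'' clause is actually \emph{more careful} than the statement you were asked to prove. You correctly note that a discrete (hence finite) $\mathrm{Iso}(M,g_0)$ acts linearly on $T_{g_0}\mathcal{S}_{g_0}$ and that $\mathcal{S}_{g_0}/\mathrm{Iso}(M,g_0)$ is then an orbifold chart that collapses to an honest manifold chart only when the isotropy representation is trivial (e.g.\ when $\mathrm{Iso}(M,g_0)$ itself is trivial). The statement as written asserts the stronger ``discrete $\Rightarrow$ smooth manifold modeled on $\mathcal{S}_{g_0}$,'' and the paper's proof simply repeats that assertion without justification. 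If you want to match the statement literally, you would need an additional argument that a nontrivial finite isotropy group acting on the slice still yields a manifold quotient near $[g_0]$ — which is false in general (the fixed point $g_0$ is an orbifold singularity unless the action is free near $g_0$, i.e.\ trivial). So your proof does not quite close the gap to the statement's ``discrete $\Rightarrow$ manifold'' clause, but that is a defect of the statement rather than of your argument; the honest conclusion is the one you reached (trivial $\Rightarrow$ manifold; discrete $\Rightarrow$ orbifold with finite isotropy). Everything else, including your correct anticipation of the subtle point about realizing the intertwining $\varphi$ inside $\mathcal{V}\cdot\mathrm{Iso}(M,g_0)$ via properness, is sound and consistent with the paper.
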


\subsection{Elliptic deformation theory for curvature-constrained subsets}

Let $\mathcal{F}(g)$ be a curvature-type operator (e.g.\ Einstein, constant scalar curvature, fixed Ricci potential).
We write the Einstein operator as
\[
\mathcal{E}(g) := \Ric(g) - \lambda\, g,
\]
with parameter $\lambda\in\mathbb{R}$ (e.g.\ normalized by volume). Its linearization at $g_0$ is the Lichnerowicz-type
operator
\[
D\mathcal{E}_{g_0}(h) \;=\; \tfrac12 \bigl(\Delta_L h + \nabla^2(\mathrm{tr}_{g_0} h)
+ \mathcal{R}\!\ast h \bigr) - \lambda\, h,
\]
where $\Delta_L h := \nabla^\ast\nabla h + 2\,\mathring{R}(h)$ is the Lichnerowicz Laplacian and
$\mathring{R}(h)$ denotes the curvature action on symmetric two-tensors. In Bianchi gauge $B_{g_0}(h)=0$ this
simplifies to an elliptic, self-adjoint operator.

\begin{theorem}[Kuranishi model for Einstein deformations]
\label{thm:kuranishi}
Assume $M$ is compact and $g_0$ satisfies $\mathcal{E}(g_0)=0$.
In a small neighborhood of $g_0$, the solution set of $\mathcal{E}(g)=0$ modulo diffeomorphisms is locally
homeomorphic to the zero set of a finite-dimensional map
\[
\kappa:\ \ker\bigl(D\mathcal{E}_{g_0}\big|_{\ker B_{g_0}}\bigr) \longrightarrow
\operatorname{coker}\bigl(D\mathcal{E}_{g_0}\big|_{\ker B_{g_0}}\bigr).
\]
If the cokernel vanishes (infinitesimal rigidity), then the Einstein moduli is a smooth manifold near $[g_0]$
of dimension $\dim \ker(D\mathcal{E}_{g_0}\!\!\upharpoonright_{\ker B_{g_0}})$.
\end{theorem}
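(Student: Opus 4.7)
The plan is to combine the gauge-fixing of Theorem~\ref{thm:ebin-palais} with a Lyapunov--Schmidt reduction on the slice, carried out in Sobolev completions and bootstrapped to smooth sections afterwards. The essential technical input is that, although $D\mathcal{E}_{g_0}$ fails to be elliptic on all of $\Gamma(S^2\Tstar M)$ because of the diffeomorphism invariance of $\mathcal{E}$, its restriction $L:=D\mathcal{E}_{g_0}\big|_{\ker B_{g_0}}$ becomes elliptic and Fredholm; equivalently, the coupled system $h\mapsto (D\mathcal{E}_{g_0}h,\,B_{g_0}h)$ is elliptic (the infinitesimal DeTurck trick). This is what allows the finite-dimensional reduction to work.

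I would first choose a Sobolev index $s$ large enough that $h\mapsto \mathcal{E}(g_0+h)$ is a smooth map $H^s(S^2\Tstar M)\to H^{s-2}(S^2\Tstar M)$ on a neighborhood of $0$, and pass to the $H^s$-completion of the slice $\mathcal{S}_{g_0}$ supplied by Theorem~\ref{thm:ebin-palais}. Elliptic theory for $L$ then yields $L^2$-orthogonal decompositions
\[
\ker B_{g_0}\cap H^s \;=\; K\oplus K^\perp,\qquad H^{s-2}(S^2\Tstar M) \;=\; L(K^\perp)\oplus C,
\]
where $K:=\ker L$ and $C:=\operatorname{coker} L$ are finite-dimensional subspaces of smooth sections, and $L|_{K^\perp}$ is a topological linear isomorphism onto its (closed) image. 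Writing $h=k+h'$ with $k\in K$ and $h'\in K^\perp$, and letting $\Pi_C$ denote $L^2$-projection onto $C$, the auxiliary equation $(I-\Pi_C)\mathcal{E}(g_0+k+h')=0$ has $h'$-linearization equal to $L|_{K^\perp}$ at the origin, so the implicit function theorem produces a unique smooth solution $h'=h'(k)$ on a neighborhood of $0\in K$ with $h'(0)=0$ and $Dh'(0)=0$.

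Define the Kuranishi map by $\kappa(k):=\Pi_C\,\mathcal{E}\bigl(g_0+k+h'(k)\bigr)\in C$; within the slice, the equation $\mathcal{E}(g)=0$ near $g_0$ is equivalent to $\kappa(k)=0$, and elliptic regularity for the DeTurck-gauged Einstein equation upgrades $H^s$-zeros to smooth Einstein metrics. Combining this with Corollary~\ref{cor:moduli-local} yields the local description of the Einstein moduli modulo $\mathrm{Iso}(M,g_0)$: in the rigid case $C=0$ the Kuranishi map vanishes identically, so $K$ itself is the moduli chart of dimension $\dim K$. The main obstacle is verifying that $L|_{\ker B_{g_0}}$ is genuinely elliptic and Fredholm: one must check that the principal symbol of $D\Ric_{g_0}$, which is degenerate along the image of $\sigma(\mathcal{L}_{(\cdot)}g_0)$, becomes invertible transversally once the Bianchi constraint is imposed, an ADN-type calculation that both explains the necessity of working in Bianchi gauge and underwrites the Fredholm decomposition used above. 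A secondary issue is the $\mathrm{Iso}(M,g_0)$-equivariance of the construction of $h'(\cdot)$ and $\kappa$, which is needed so that the descent to the orbifold quotient in Corollary~\ref{cor:moduli-local} is well-defined.
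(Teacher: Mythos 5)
Your proposal follows the same strategy as the paper: pass to Sobolev completions, restrict to the Bianchi slice, apply Lyapunov--Schmidt to get a finite-dimensional obstruction map $\kappa$, bootstrap by elliptic regularity, and then descend through $\mathrm{Iso}(M,g_0)$ using \cref{cor:moduli-local}. You also correctly isolate the two load-bearing technical points (ellipticity/Fredholmness of $L$ on the slice via the linearized DeTurck trick, and $\mathrm{Iso}(M,g_0)$-equivariance of the Kuranishi data), which the paper invokes in the same way.

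There is, however, a genuine gap in the codomain of your Lyapunov--Schmidt decomposition. You write
\[
H^{s-2}(S^2\Tstar M)\;=\;L(K^\perp)\oplus C,\qquad C:=\operatorname{coker} L\ \text{finite-dimensional},
\]
but this cannot be correct as stated. The image of $L=D\mathcal{E}_{g_0}|_{\ker B_{g_0}}$ is contained in $\ker B_{g_0}\cap H^{s-2}$ (this follows from the linearized contracted Bianchi identity at the Einstein metric $g_0$, namely $B_{g_0}\circ D\mathcal{E}_{g_0}=0$). Hence $L(K^\perp)$ has \emph{infinite} codimension in $H^{s-2}(S^2\Tstar M)$ --- its complement contains the entire image of $B_{g_0}^{\ast}$. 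With your choice of codomain, $C$ is infinite-dimensional and the finite-dimensional reduction collapses: solving $(I-\Pi_C)\mathcal{E}(g_0+k+h')=0$ by IFT with linearization $L|_{K^\perp}$ cannot kill the components of $\mathcal{E}$ in $(\ker B_{g_0})^{\perp}$, because the linearization has no reach there.

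The repair is standard, and is essentially what the paper does. Either (i) take the codomain to be $\ker B_{g_0}\cap H^{s-2}$, where $\operatorname{coker}L$ \emph{is} finite-dimensional (by ellipticity and self-adjointness of $L$ on the slice), run Lyapunov--Schmidt for the projected equation $\Pi_{\ker B_{g_0}}\mathcal{E}(g_0+h)=0$, and then use the contracted Bianchi identity $B_g\mathcal{E}(g)=0$ together with the near-invertibility of $B_g\circ B_{g_0}^\ast$ for $g$ near $g_0$ to show the unprojected component of $\mathcal{E}$ vanishes automatically (this is a separate small elliptic argument, and is where Killing fields of $g_0$ reenter); or (ii) gauge-fix \emph{before} reducing, as the paper does, by working with the DeTurck-modified operator $\mathcal{F}(g)=\mathcal{E}(g)+\tfrac12\mathcal{L}_{W(g)}g$, $W(g)=B_{g_0}(g-g_0)^\sharp$, whose linearization is elliptic on the \emph{full} $H^s\to H^{s-2}$ so that $\operatorname{coker}$ is finite-dimensional there, and then deduce $\mathcal{F}=0\Rightarrow\mathcal{E}=0$ on the slice via the DeTurck argument. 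You mention both DeTurck ellipticity and the DeTurck-gauged equation in passing, but your actual reduction is run with $\mathcal{E}$ against the full target $H^{s-2}(S^2\Tstar M)$, so as written the argument does not close.
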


\begin{proof}[Idea]
Gauge-fix the equation by adding the Bianchi term, obtaining an elliptic operator.
Apply the implicit function theorem between Sobolev spaces and project to kernel/cokernel to obtain a finite-dimensional
obstruction map $\kappa$ (Kuranishi method), then regularity theory yields smoothness.
\end{proof}

\begin{remark}[Other constraints]
For constant scalar curvature (CSC) metrics, the linearization gives a fourth-order elliptic operator on conformal
factors when restricted to a conformal class; combined with the slice one obtains analogous deformation results.
\end{remark}

\subsection{Polymetric moduli and product slices}

For a finite index set $I$, consider the polymetric space
$\Pol(M;\vec p,\vec q)=\prod_{i\in I}\Gamma(\mathcal{G}^{p_i,q_i})$ with the diagonal $\Diff(M)$-action.
The orbit through $\mathbf{g}=(g_i)$ has tangent
\[
T_{\mathbf{g}}\bigl(\Diff\cdot \mathbf{g}\bigr)=
\bigl\{\,(\mathcal{L}_X g_i)_{i\in I} : X\in\Gamma(TM)\,\bigr\}.
\]
Define the product Bianchi operator $B_{\mathbf{g}}(h_1,\dots,h_I):=\bigl(B_{g_i}(h_i)\bigr)_{i\in I}$.

\begin{proposition}[Product slice]
\label{prop:product-slice}
Fix $\mathbf{g}_0=(g_{0,i})$. The subset
\[
\mathcal{S}_{\mathbf{g}_0}=\Bigl\{\mathbf{g}_0+\mathbf{h} \;:\;
\mathbf{h}=(h_i)_{i\in I},\; B_{g_{0,i}}(h_i)=0\text{ for all }i \Bigr\}
\]
is a tame Fr\'echet submanifold of $\Pol(M;\vec p,\vec q)$ and, together with a small neighborhood
$\mathcal{V}\subset\Diff(M)$, gives a slice for the diagonal action exactly as in Theorem~\ref{thm:ebin-palais}.
\end{proposition}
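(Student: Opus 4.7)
The plan is to reduce the claim to Theorem~\ref{thm:ebin-palais} applied in each factor, leveraging two structural observations: the product Bianchi operator decouples as a direct sum $B_{\mathbf{g}_0}=\bigoplus_{i\in I}B_{g_{0,i}}$ with $\ker B_{\mathbf{g}_0}=\prod_i \ker B_{g_{0,i}}$, and the diagonal $\Diff(M)$-action embeds into the componentwise action of $\Diff(M)^I$ on $\prod_i \Gamma(\mathcal{G}^{p_i,q_i})$. Both observations let single-metric results transport through a finite product.

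First I would establish the \emph{submanifold structure}. Each single slice $\mathcal{S}_{g_{0,i}}\subset \Gamma(\mathcal{G}^{p_i,q_i})$ is a tame Fr\'echet submanifold by Theorem~\ref{thm:ebin-palais}. Since tameness and submanifold-hood are preserved under finite products in the Fr\'echet category, $\prod_i \mathcal{S}_{g_{0,i}}$ is a tame submanifold of $\prod_i \Gamma(\mathcal{G}^{p_i,q_i})$; intersecting with the open subbundle $\mathcal{G}^{\vec p, \vec q}$ (open by Lemma~\ref{lem:open-signature}) yields $\mathcal{S}_{\mathbf{g}_0}$.

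Next I would address the \emph{slice property} by linearizing $\Phi:\mathcal{V}\times\mathcal{S}_{\mathbf{g}_0}\to \Pol(M;\vec p,\vec q)$, $(\varphi,\mathbf{g})\mapsto \varphi^\ast\mathbf{g}$, at $(\mathrm{id},\mathbf{g}_0)$ to obtain $(X,\mathbf{h})\mapsto(\mathcal{L}_X g_{0,i}+h_i)_{i\in I}$, and invoking the tame implicit function theorem exactly as in Theorem~\ref{thm:ebin-palais}, using the componentwise Ebin--Palais decomposition $\Gamma(S^2\Tstar M)=\mathrm{im}(\mathcal{L}_{(\cdot)}g_{0,i})\oplus \ker B_{g_{0,i}}$ in each factor. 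Identification of the fibers of $\Phi$ with the diagonal action fibers then follows from the construction once the local diffeomorphism property is in hand, and elliptic regularity bootstraps from Sobolev to smooth sections.

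The main obstacle is the interplay between the single-parameter diagonal orbit and the $|I|$ independent Bianchi conditions: the combined operator $X\mapsto (B_{g_{0,i}}\mathcal{L}_X g_{0,i})_{i\in I}$ is \emph{overdetermined} elliptic for $|I|\ge 2$, so one must verify closed-range and Fredholm properties in the tame Fr\'echet scale before Nash--Moser applies. Its kernel is the joint conformal Killing algebra $\bigcap_i \mathrm{conf}(g_{0,i})$, finite-dimensional on compact $M$ by elliptic regularity, and the joint isotropy $\bigcap_i \mathrm{Iso}(M,g_{0,i})$ is a compact Lie subgroup of $\Diff(M)$ playing the role of the residual gauge symmetry, identifying with the orbifold group appearing in Corollary~\ref{cor:moduli-local}.
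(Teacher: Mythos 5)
Your approach structurally mirrors the paper's own (product Bianchi slice, linearize the action map $\Phi$, apply the implicit function theorem to $(X,\mathbf{h})\mapsto(\mathcal{L}_Xg_{0,i}+h_i)_i$), and the submanifold claim goes through exactly as you argue. You also correctly locate ``the main obstacle''---the overdeterminedness of $X\mapsto(B_{g_{0,i}}\mathcal{L}_Xg_{0,i})_i$---but you understate its severity: for $|I|\ge2$ this is not a technicality to be repaired by closed-range or Nash--Moser estimates, it is fatal to the claimed slice property (and the paper's own detailed proof in \S\ref{sec:detailed-proofs} has the same gap). The IFT argument requires the decomposition
\[
\prod_{i\in I}\Gamma(S^2T^\ast M)\;=\;\bigl\{(\mathcal{L}_Xg_{0,i})_{i\in I}:X\in\Gamma(TM)\bigr\}\;\oplus\;\prod_{i\in I}\ker B_{g_{0,i}},
\]
claimed to follow ``by Green's formula componentwise,'' but this fails for $|I|\ge2$: the diagonal orbit tangent has infinite codimension inside $\prod_i\mathrm{im}(\mathcal{L}_{(\cdot)}g_{0,i})$, while $\prod_i\ker B_{g_{0,i}}$ complements only the latter. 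Concretely, with $I=\{1,2\}$ and $g_{0,1}=g_{0,2}=g_0$, the element $(\mathcal{L}_Xg_0,0)$ for non-Killing $X$ cannot be written as $(\mathcal{L}_Yg_0+h_1,\mathcal{L}_Yg_0+h_2)$ with $h_i\in\ker B_{g_0}$ and the \emph{same} $Y$ in both slots. Thus $\Phi$ is not a submersion, its image misses a subset of infinite codimension in $\Pol(M;\vec p,\vec q)$, and $\mathcal{S}_{\mathbf{g}_0}$ is a slice for the componentwise $\Diff(M)^I$-action, not the diagonal $\Diff(M)$-action. Your reduction via ``the diagonal action embeds into the componentwise action'' runs the wrong way: a slice for a larger group is \emph{smaller}, so restricting to the diagonal subgroup demands a \emph{larger} slice, not the same one.

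Green's formula actually shows that the $L^2$-orthogonal complement of the diagonal orbit is cut out by a \emph{single} vector-valued condition $\sum_{i}\bigl(\text{volume-weighted }B_{g_{0,i}}(h_i)^{\sharp_{g_{0,i}}}\bigr)=0$, not by the $|I|$ independent equations $B_{g_{0,i}}(h_i)=0$; that coupled gauge is the natural candidate for a genuine slice and restores the dimension count needed for the IFT. One further slip: the kernel of $X\mapsto B_{g_0}\mathcal{L}_Xg_0$ is the Killing algebra of $g_0$, not the conformal Killing algebra---pairing with $\mathcal{L}_Xg_0$ via Green's formula forces $\mathcal{L}_Xg_0=0$ outright.
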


\begin{proof}
The operator $B_{\mathbf{g}_0}$ is a product of first-order elliptic operators; the implicit function theorem applies
componentwise, and the diagonal action involves the same vector field parameter $X$, hence the same gauge-fixing
eliminates the orbit directions simultaneously.
\end{proof}

\begin{remark}[Coupled constraints]
For coupled problems (e.g.\ $\Ric(g_1)=\lambda g_1$ and $\Scal(g_2)=\kappa$ with shared diffeomorphism gauge), the
linearization is block-diagonal in the unknowns but shares the infinitesimal action $\mathcal{L}_X$; the slice of
Proposition~\ref{prop:product-slice} yields a Fredholm map whose index is the sum of the component indices.
\end{remark}

\subsection{Quotient stack viewpoint}
Define the differentiable stack of metrics
\(
\mathfrak{Met}^{p,q}(M):=[\,\Gamma(\mathcal{G}^{p,q})/\Diff(M)\,]
\)
and, for polymetrics,
\(
\mathfrak{Pol}(M;\vec p,\vec q):=[\,\Pol(M;\vec p,\vec q)/\Diff(M)\,].
\)
The slice theorems above identify atlases by Banach (or tame Fr\'echet) manifolds and
describe isotropy as the isometry groups, exhibiting these stacks as (effective) orbifolds when isotropy is finite.

\bigskip
\noindent
In the next section we use these local models to set up the families Atiyah--Singer index for
geometric operators parametrized by (poly)metric data and to compute the differential-topological invariants of
the resulting index classes.
\section{Families index for geometric operators}
\label{sec:families-index}

We now place the (poly)metric framework in the setting of the families Atiyah--Singer index theorem.
The goal is to parameterize elliptic operators by (poly)metrics and compute the resulting index classes
in topological terms. Throughout this section all manifolds and bundles are smooth; unless stated otherwise
the total space is compact (or proper over the base) so that all fiberwise elliptic operators are Fredholm.

\subsection{Geometric setup and parameter spaces}

Let $\pi:X\to B$ be a smooth, proper submersion with compact connected fibers $X_b=\pi^{-1}(b)$ of
dimension $d$. Assume $X$ and $B$ are second countable and $\pi$ admits local product charts. Fix a
finite index set $I$ and consider a \emph{vertical polymetric} datum
\[
\mathbf{g}^{v}=(g^v_i)_{i\in I}\in
\Gamma\Bigl(\prod_{i\in I}\mathcal{G}^{p_i,q_i}(T^vX)\Bigr),
\]
where $T^vX:=\ker(d\pi)$ is the vertical tangent bundle.
When $(p_i,q_i)=(d,0)$ for all $i$, we speak of a \emph{multi-Riemannian vertical structure}.
Horizontal distributions and connections on $T^vX$ are chosen as needed but play no role in the index.

Let $E\to X$ be a $\mathbb{Z}_2$-graded complex Hermitian vector bundle with
compatible connection $\nabla^E$. A \emph{fiberwise elliptic differential operator} is a family
$\{D_b\}_{b\in B}$ of odd-degree differential operators
\[
D_b:\ \Gamma(X_b,E|_{X_b})\longrightarrow \Gamma(X_b,E|_{X_b})
\]
whose total symbol is invertible away from the zero section of $T^\ast X_b$, uniformly in $b$ on compact subsets.
We assume $D_b$ is formally self-adjoint when specified (Dirac-type).

\begin{definition}[Parameter spaces from polymetrics]
\label{def:param-space}
Let $\mathcal{P}\subset \Pol(T^vX;\vec p,\vec q)$ be an open $\Diff_B(X)$-invariant subspace of vertical
polymetrics subject to auxiliary constraints (e.g.\ volume normalization along fibers, curvature bounds).
A \emph{geometric family} over $\mathcal{P}\times B$ consists of
$(\pi:X\to B,\,E\to X,\,\mathbf{g}^v\in\mathcal{P},\,D)$ with $D$ elliptic along fibers and
whose coefficients depend smoothly on $(b,\mathbf{g}^v)$ in the sense of
\S\ref{sec:topology-analysis}.
\end{definition}

\begin{remark}[Smooth dependence]
If $D$ is built from $(T^vX,\nabla^{v},E,\nabla^E)$ by universal algebraic expressions using
$\mathbf{g}^v$ and finitely many vertical covariant derivatives (e.g.\ Hodge–de Rham, signature, Dirac),
then $(b,\mathbf{g}^v)\mapsto D_{(b,\mathbf{g}^v)}$ is smooth in the operator sense on Sobolev scales.
\end{remark}

\subsection{Fredholmness, index bundle, and continuity}

Fix a background vertical Riemannian metric to define Sobolev spaces $H^s(X_b;E)$; different choices yield equivalent
topologies by Proposition~\ref{prop:equiv-sobolev}. For each $(b,\mathbf{g}^v)$ the closure of $D_{(b,\mathbf{g}^v)}$
is a Fredholm operator on $H^1\to L^2$.

\begin{proposition}[Local triviality and smooth fields of kernels]
\label{prop:kernel-field}
After stabilizing by a finite-rank vector bundle over $\mathcal{P}\times B$, the family
$\{D_{(b,\mathbf{g}^v)}\}$ admits a\footnote{In general, the kernel dimensions need not be locally constant;
stabilization yields a continuous vector bundle in the sense of Atiyah.} continuous vector bundle
$\ker D$ and cokernel bundle $\operatorname{coker} D$, and the \emph{index bundle}
\[
\mathrm{Ind}(D)\;:=\;[\ker D]-[\operatorname{coker}D]\ \in\ K^0(\mathcal{P}\times B)
\]
is well defined and homotopy-invariant under continuous deformations through families of elliptic operators.
\end{proposition}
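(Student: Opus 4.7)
The plan is to realize the family as a continuous family of Fredholm operators on a fixed Hilbert bundle, and then apply the Atiyah--J\"anich stabilization construction to extract the index bundle. First I would fix an auxiliary background vertical Riemannian metric on $T^vX$ and use local product trivializations of $\pi:X\to B$ to identify the spaces $L^2(X_b;E|_{X_b})$ and $H^1(X_b;E|_{X_b})$ with the fibers of continuous Hilbert bundles $\mathcal{H},\mathcal{H}^1\to B$. By Proposition~\ref{prop:equiv-sobolev} applied componentwise along the fibers, any polymetric $\mathbf{g}^v\in\mathcal{P}$ induces an equivalent pair of Sobolev norms, so the underlying topological Hilbert bundles are independent of the parameter. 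Combining this with the smoothness of Christoffel symbols and curvature expressions in $\mathbf{g}^v$ from Theorem~\ref{thm:smooth-ops} and Lemma~\ref{lem:poly-smooth}, I obtain a continuous (indeed smooth) map $(b,\mathbf{g}^v)\mapsto D_{(b,\mathbf{g}^v)}$ into $\mathrm{Fred}(\mathcal{H}^1,\mathcal{H})$ in the operator-norm topology, with parametrix provided by the elliptic symbol.

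Next I would carry out the local stabilization. Around any point $z_0=(b_0,\mathbf{g}_0)\in\mathcal{P}\times B$, pick a finite-dimensional subspace $V_{z_0}\subset \Gamma(X_{b_0};E|_{X_{b_0}})$ that surjects onto $\operatorname{coker} D_{z_0}$, and spread it to nearby fibers using a local vertical trivialization. The augmented operator $\tilde D_z := D_z\oplus\iota_{V_{z_0}}:\mathcal{H}^1_z\oplus V_{z_0}\to\mathcal{H}_z$ is surjective at $z_0$, and by openness of surjectivity within the Fredholm set it remains surjective on a neighborhood $U_{z_0}$. Lower semicontinuity of the kernel dimension, together with continuity of the index, forces $\dim\ker\tilde D$ to be locally constant equal to $\dim V_{z_0}+\mathrm{ind}(D)$, so $\ker\tilde D|_{U_{z_0}}$ is a genuine finite-rank continuous complex vector bundle.

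To globalize I would use paracompactness: $\mathcal{P}$ is open in a Fr\'echet space and $B$ is second countable, so $\mathcal{P}\times B$ admits locally finite refinements of the cover $\{U_{z_\alpha}\}$. Either assemble the local $V_\alpha$ into a (locally finite) direct sum cut off by a partition of unity, or equivalently invoke the Atiyah--J\"anich theorem that $\mathrm{Fred}$ represents $K^0$ on paracompact spaces via an $\ell^2$-stabilization whose unitary group is contractible; both routes yield a well-defined class $\mathrm{Ind}(D)=[\ker\tilde D]-[\underline{V}]\in K^0(\mathcal{P}\times B)$, independent of the stabilizing choices by the standard double-stabilization argument. Homotopy invariance is automatic: a continuous deformation produces a continuous family over $\mathcal{P}\times B\times[0,1]$, and the class at the two endpoints is the pullback under the homotopic inclusions $\iota_0,\iota_1:\mathcal{P}\times B\hookrightarrow \mathcal{P}\times B\times[0,1]$.

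The main obstacle I anticipate is controlling the non-compactness of the parameter space $\mathcal{P}\times B$: the classical Atiyah construction is cleanest over a compact base, and one must verify paracompactness carefully for the infinite-dimensional Fr\'echet manifold $\mathcal{P}$ (or else substitute an $\ell^2$-stabilization using contractibility of the unitary group). A related subtlety is the uniformity of the Fredholm estimate and of the symbol invertibility as $b$ ranges through a non-compact $B$: one must either work locally in $B$ or assume $\sigma$-compactness with uniform symbol bounds on compacts. Provided the fiber compactness and local product structure of $\pi$ hold as stipulated, Proposition~\ref{prop:equiv-sobolev} delivers the required uniform Sobolev equivalence on each relatively compact subset of $\mathcal{P}\times B$, and the rest of the argument is formal.
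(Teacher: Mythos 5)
Your proof is correct but follows a genuinely different route from the paper's. The paper uses the \emph{spectral cut-off} construction: because the fiberwise operators are (essentially) self-adjoint with discrete spectrum on compact fibers, one takes a smooth bump $\rho$ supported near $0$ and forms $P_{(b,\mathbf{g}^v)}=\rho(D_{(b,\mathbf{g}^v)})$ (morally the spectral projection onto eigenvalues $|\lambda|\le\Lambda$ with $\Lambda$ chosen locally outside the spectrum), whose image is a locally defined finite-rank bundle containing $\ker D$; the index class is then the virtual difference of these ``low-energy'' bundles for $D$ and $D^*$. That approach ties directly to the large-$t$ limit in the superconnection proof of Theorem~\ref{thm:families-index}, where the heat kernel localizes onto exactly these low-lying eigenspaces. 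You instead run the Atiyah--J\"anich \emph{cokernel stabilization}: augment $D_z$ by a finite-dimensional space $V$ surjecting onto $\operatorname{coker}D_{z_0}$, observe $\tilde D_z=D_z\oplus\iota_V$ is surjective on a neighborhood by openness, and read off $\mathrm{Ind}(D)=[\ker\tilde D]-[\underline V]$. Your route has the virtue of not requiring self-adjointness and of foregrounding the classifying-space interpretation $\mathrm{Fred}(\mathcal{H})\simeq \Z\times BU$; it also more explicitly confronts the fact that $\mathcal{P}\times B$ is a non-compact (indeed infinite-dimensional Fr\'echet) base, where paracompactness of the parameter space and a locally finite refinement are genuinely needed to globalize the stabilizing bundle---a point the paper's sketch glosses over. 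The paper's route is shorter where self-adjointness is available and meshes better with the heat-kernel machinery used in the rest of \S\ref{sec:detailed-proofs}. Both correctly establish well-definedness and homotopy invariance of $\mathrm{Ind}(D)\in K^0(\mathcal{P}\times B)$.
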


\begin{proof}[Idea]
Use elliptic regularity, Rellich compactness along compact fibers, and standard spectral perturbation theory for
self-adjoint elliptic operators; then apply Atiyah–Singer’s construction of the index bundle via
finite-dimensional spectral cut-offs.
\end{proof}

\subsection{Spin$^c$ Dirac, Hodge, and signature families}

Assume $T^vX$ carries a spin$^c$ structure with determinant line $\mathcal{L}\to X$ and
Clifford module $S\to X$. For any Hermitian bundle $E\to X$ with unitary connection,
the \emph{twisted vertical Dirac family} is
\[
\slashed{D}^{\,E}_{(b,\mathbf{g}^v)}:\ \Gamma(X_b,S\otimes E)\longrightarrow \Gamma(X_b,S\otimes E).
\]
When $E$ is $\mathbb{Z}_2$-graded, $\slashed{D}^{\,E}$ is odd and Fredholm fiberwise. Two classical special cases:
(i) $E=\mathbb{C}$ gives the spin$^c$ Dirac family; (ii) For the Hodge–de Rham operator, take $S=\Lambda^{\bullet}T^{v\ast}X$
with the Levi–Civita connection; (iii) The signature operator acts on $\Lambda^{\mathrm{even/odd}}$ endowed with the
Hodge star and grading.

\subsection{Topological index via characteristic classes}

Let $\pi_*:H^\bullet(X;\mathbb{Q})\to H^{\bullet-d}(B;\mathbb{Q})$ denote fiber integration with the orientation induced
by the spin$^c$ structure. Write $\Ahat(T^vX)$ for the $\Ahat$-class of the vertical tangent bundle and
$c_1(\mathcal{L})$ for the first Chern class of the spin$^c$ determinant line.

\begin{theorem}[Families index theorem, spin$^c$ case]
\label{thm:families-index}
For the twisted spin$^c$ Dirac family $\slashed{D}^{\,E}$ one has in rational cohomology
\begin{equation}\label{eq:families-chern}
\mathrm{ch}\!\left(\mathrm{Ind}\big(\slashed{D}^{\,E}\big)\right)
\;=\;
\pi_*\!\left(\Ahat(T^vX)\,e^{\tfrac12 c_1(\mathcal{L})}\,\mathrm{ch}(E)\right)
\ \in\ H^{\mathrm{even}}(\mathcal{P}\times B;\mathbb{Q}),
\end{equation}
where the left-hand side is pulled back along the projection to $B$ (and is constant along $\mathcal{P}$ if
$E$ and the spin$^c$ data do not depend on $\mathbf{g}^v$).
\end{theorem}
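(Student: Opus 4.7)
The strategy is to reduce the asserted identity on $\mathcal{P}\times B$ to the classical Atiyah--Singer families index theorem on $B$, by exploiting the fact that the polymetric parameter $\mathbf{g}^v$ affects only the \emph{operator} and not the underlying bundle data $T^vX$, $\mathcal{L}$, $E$. I would first observe that the right-hand side of \eqref{eq:families-chern} is literally a pullback from $B$: the classes $\hat{A}(T^vX)$, $c_1(\mathcal{L})$, and $\mathrm{ch}(E)$ depend only on the smooth bundles on $X$, so $\pi_\ast$ lands in $H^{\mathrm{even}}(B;\Q)$ and is then pulled back along $\mathcal{P}\times B\to B$. This disposes of the RHS immediately.

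The core step is to show that $\mathrm{Ind}(\slashed{D}^E)\in K^0(\mathcal{P}\times B)$ is likewise pulled back from a single class on $B$. For this I would use the convexity and openness results of \S\ref{sec:topology-analysis}: by Corollary~\ref{cor:convex}, the fiberwise Riemannian cone $\Met(T^vX_b)$ is convex, and by Lemma~\ref{lem:open-signature} (applied to $T^vX$) the polymetric space $\Gamma(\prod_i\mathcal{G}^{p_i,q_i}(T^vX))$ is open in a Fr\'echet space, hence locally path-connected. Given two vertical polymetrics $\mathbf{g}^v_0,\mathbf{g}^v_1\in\mathcal{P}$ in the same component, I choose a smooth path $t\mapsto\mathbf{g}^v_t\in\mathcal{P}$ (the straight-line segment suffices whenever $\mathcal{P}$ is convex, as in the pure Riemannian case); by the smooth-dependence statement of Definition~\ref{def:param-space} the family $\slashed{D}^E_{(b,\mathbf{g}^v_t)}$ is a continuous family of vertical elliptic operators. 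The homotopy invariance part of Proposition~\ref{prop:kernel-field} then gives
\[
\mathrm{Ind}\bigl(\slashed{D}^E_{\mathbf{g}^v_0}\bigr)=\mathrm{Ind}\bigl(\slashed{D}^E_{\mathbf{g}^v_1}\bigr)\quad\text{in } K^0(B),
\]
so $\mathrm{Ind}(\slashed{D}^E)$ on $\mathcal{P}\times B$ is $\mathrm{pr}_B^\ast$ of a fixed class.

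With both sides reduced to $B$, I would fix any reference $\mathbf{g}^v_0\in\mathcal{P}$ and apply the classical Atiyah--Singer families index theorem to the twisted spin$^c$ Dirac family $\slashed{D}^E_{(\cdot,\mathbf{g}^v_0)}$ over $\pi:X\to B$. Two routes are available: the topological/K-theoretic route (fiberwise embedding $X\hookrightarrow S^{2N}\times B$, Thom isomorphism for the spin$^c$ normal bundle, and multiplicativity of $\hat{A}\cdot e^{c_1(\mathcal{L})/2}\cdot\mathrm{ch}(E)$ under K-theoretic pushforward), or the analytic superconnection route of Bismut via the local families index formula and heat-kernel limits (see BGV~\cite{BGV} and Bismut--Cheeger~\cite{BC-I,BC-II}). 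Either yields \eqref{eq:families-chern} on $B$, and pulling back to $\mathcal{P}\times B$ completes the proof.

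\paragraph{Main obstacle.} The delicate point is the homotopy invariance across $\mathcal{P}$: one must ensure that the path $t\mapsto\mathbf{g}^v_t$ stays inside $\mathcal{P}$ (the auxiliary constraints defining $\mathcal{P}$ need not be convex) and that the resulting operator family has resolvents continuous in a topology strong enough to apply spectral-cutoff stabilization. In the generic case one replaces the straight segment by a path through $\mathcal{P}$, guaranteed by connectedness, and invokes uniform ellipticity plus Rellich compactness on the compact fibers to pass to a continuous field of finite-rank approximants à la Atiyah. Once this analytic input is in place, the topological conclusion is immediate from the classical theorem.
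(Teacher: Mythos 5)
Your proposal is correct, and it takes a genuinely different (more modular and topological) route than the paper's own argument. The paper proves Theorem~\ref{thm:families-index} by running the Bismut superconnection heat-kernel proof directly: it introduces $\mathbb{A}_t$, shows $\alpha_t=\operatorname{Str}(e^{-\mathbb{A}_t^2})$ is closed with $t$-derivative exact, and identifies the two limits $t\to\infty$ (Chern character of the index bundle) and $t\to 0$ (the $\Ahat\,e^{c_1/2}\,\mathrm{ch}$ form via Getzler rescaling). The $\mathcal{P}$-independence is then deduced at the \emph{differential form} level: varying the vertical polymetric changes $\alpha_t$ only by exact transgression terms. By contrast, you dispose of the $\mathcal{P}$-dependence first, at the $K$-theory level, by using convexity/contractibility of the polymetric cone (Corollary~\ref{cor:convex}, Lemma~\ref{lem:open-signature}) together with the homotopy invariance of the index bundle (Proposition~\ref{prop:kernel-field}); you then cite the classical families index theorem on $B$ as a black box (offering either the K-theoretic embedding/Thom route or the superconnection route). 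Your approach is shorter if the classical theorem is taken as given, and it makes the ``pulled back from $B$'' structure of both sides explicit; the paper's approach is self-contained and yields a stronger, form-level refinement (Theorem~\ref{thm:bismut}) with explicit transgression estimates that are used later (Theorem~\ref{thm:flow-transgression}).

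One loose end in your write-up worth tightening: path-connectedness of $\mathcal{P}$ is enough to show that the index bundle \emph{restricted to each slice} $\{p\}\times B$ is a fixed class in $K^0(B)$, but to conclude that $\mathrm{Ind}(\slashed{D}^E)\in K^0(\mathcal{P}\times B)$ is literally the pullback along $\mathrm{pr}_B$ you need more, e.g.\ a contraction of $\mathcal{P}$ onto the basepoint $\mathbf{g}^v_0$ giving a homotopy of the family $\{\slashed{D}^E_{(p,b)}\}$ to the $\mathcal{P}$-constant family $\{\slashed{D}^E_{(\mathbf{g}^v_0,b)}\}$. In the unconstrained multi-Riemannian case this follows from straight-line convexity; for a general $\mathcal{P}$ (defined by auxiliary constraints) you should either assume contractibility or observe, as the theorem's parenthetical hints, that the conclusion is really about constancy along $\mathcal{P}$. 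You do flag this as the main obstacle, so the gap is acknowledged, but it should be resolved rather than deferred if the proof is to be complete.
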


\begin{remark}[Dolbeault and Hodge variants]
For a complex fibration, the Dolbeault family has characteristic class $Td(T^{0,1}X/B)\,\mathrm{ch}(E)$
in place of $\Ahat\,e^{\tfrac12 c_1}$; for the Hodge family, $\mathrm{Ind}$ is the alternating sum of Betti bundles and
is topologically constant, recovering the Gauss–Bonnet fiberwise identity.
\end{remark}

\begin{corollary}[Additivity and external products]
\label{cor:additivity}
If $E=\bigoplus_{j=1}^m E_j$ and $D=\bigoplus_{j=1}^m D^{E_j}$, then
$\mathrm{Ind}(D)=\sum_{j}\mathrm{Ind}(D^{E_j})$ in $K^0(\mathcal{P}\times B)$ and
\[
\mathrm{ch}\big(\mathrm{Ind}(D)\big)=\sum_j \pi_*\!\left(\Ahat(T^vX)\,e^{\tfrac12 c_1(\mathcal{L})}\,\mathrm{ch}(E_j)\right).
\]
For product fibrations $X_1\times X_2\to B_1\times B_2$ one has
$\mathrm{Ind}(D_1\boxtimes 1 + 1\boxtimes D_2)=\mathrm{Ind}(D_1)\boxtimes 1 + 1\boxtimes \mathrm{Ind}(D_2)$.
\end{corollary}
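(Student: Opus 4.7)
The plan is to treat the two assertions separately. The additivity claim is essentially formal and reduces to compatibility of the Atiyah stabilization of Proposition~\ref{prop:kernel-field} with direct sum decompositions of $E$, while the external product claim requires a Künneth-type spectral decomposition argument on the product fibration. Both statements then become corollaries of Theorem~\ref{thm:families-index} together with the multiplicative/additive functoriality of the Chern character.

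For the direct sum assertion, the key observation is that $D=\bigoplus_{j=1}^m D^{E_j}$ preserves the fiberwise splitting $E|_{X_b}=\bigoplus_j E_j|_{X_b}$ on every Sobolev completion, so that for each parameter $(b,\mathbf{g}^v)$ one has $\ker D_{(b,\mathbf{g}^v)}=\bigoplus_j\ker D^{E_j}_{(b,\mathbf{g}^v)}$ and similarly for cokernels. I would therefore choose the stabilizing auxiliary bundle in Proposition~\ref{prop:kernel-field} as a direct sum $\bigoplus_j F_j$ with $F_j$ stabilizing the $j$-th family; this is possible because the spectral cut-off construction can be performed on each block independently and then summed. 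The resulting virtual bundle identity $\mathrm{Ind}(D)=\sum_j\mathrm{Ind}(D^{E_j})$ in $K^0(\mathcal{P}\times B)$ is then immediate. Applying $\mathrm{ch}$, using its additivity and the identity $\mathrm{ch}(E)=\sum_j\mathrm{ch}(E_j)$, and invoking $\mathbb{Q}$-linearity of the fiber integration $\pi_*$ of Theorem~\ref{thm:families-index} yields the stated Chern character decomposition term-by-term.

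For the external product statement, I would work on the product fibration $\pi_1\times\pi_2:X_1\times X_2\to B_1\times B_2$ with vertical tangent bundle $T^v(X_1\times X_2)\cong T^vX_1\boxplus T^vX_2$, and exploit the graded Clifford identity $(D_1\boxtimes 1+1\boxtimes D_2)^2=D_1^2\boxtimes 1+1\boxtimes D_2^2$, which holds because the graded commutator of $D_1\boxtimes 1$ and $1\boxtimes D_2$ vanishes. The small-eigenvalue spectral projection for the combined operator then splits fiberwise as a finite sum of tensor products of spectral projections of $D_1$ and $D_2$. Performing simultaneous Atiyah stabilization over both factors and tracking the graded signs gives the claimed decomposition of the index bundle after identifying $\boxtimes$ with the external K-theory product. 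The resulting Chern character identity then follows from multiplicativity of $\mathrm{ch}$ under $\boxtimes$ and naturality of fiber integration under products $\pi_*=(\pi_1)_*\boxtimes(\pi_2)_*$ on $H^\bullet(-;\mathbb{Q})$.

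The main obstacle I anticipate is performing the stabilization uniformly and compatibly across the parameter space $\mathcal{P}\times B_1\times B_2$ in the external product case: jumps in kernel dimension in the $D_1$-family and in the $D_2$-family need not occur on the same loci, so the choice of spectral cut-offs must be made independently and then tensored, rather than directly on the combined operator. This is a bookkeeping issue rather than a genuine obstruction, handled by the standard argument of shrinking neighborhoods and using that Atiyah's construction only depends on the choice of cut-off level up to stable equivalence in $K^0$. The additivity part, by contrast, is essentially immediate once block-compatible stabilization is selected.
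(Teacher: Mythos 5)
For the direct-sum assertion your argument matches the paper's: $D$ is block-diagonal with respect to $E=\bigoplus_j E_j$, so kernel and cokernel split as direct sums after a block-compatible stabilization in the sense of Proposition~\ref{prop:kernel-field}, additivity in $K^0$ follows, and the Chern character identity then comes from additivity of $\mathrm{ch}$ and $\mathbb{Q}$-linearity of $\pi_*$. For the external-product assertion you take a genuinely different route. The paper argues at the level of Bismut superconnection representatives: the superconnection on the product fibration is the graded sum $\mathbb{A}_1\hat\otimes 1 + 1\hat\otimes\mathbb{A}_2$, its square decomposes additively, and the supertrace factorizes as a wedge product $\operatorname{Str}(e^{-\mathbb{A}_1^2})\wedge\operatorname{Str}(e^{-\mathbb{A}_2^2})$, giving the Chern-character identity directly at the form level. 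You instead work at the operator and $K$-theoretic level, using $(D_1\boxtimes 1 + 1\boxtimes D_2)^2 = D_1^2\boxtimes 1 + 1\boxtimes D_2^2$, the K\"unneth decomposition of small-eigenvalue spectral projections, and Atiyah stabilization, deferring the Chern character to the end. Both are legitimate; the paper's route keeps everything in the transgression/form picture used throughout \S\ref{sec:families-index}, whereas yours is closer to the original Atiyah--Singer $K$-theoretic argument.

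There is, however, a substantive point you should have flagged. The corollary as printed asserts the \emph{sum}
$\mathrm{Ind}(D_1\boxtimes 1 + 1\boxtimes D_2)=\mathrm{Ind}(D_1)\boxtimes 1 + 1\boxtimes \mathrm{Ind}(D_2)$,
but the argument you run actually yields the external $K$-theory \emph{product} $\mathrm{Ind}(D_1)\boxtimes\mathrm{Ind}(D_2)$: graded anticommutation and $D^2=D_1^2\boxtimes 1 + 1\boxtimes D_2^2$ give $\ker D\cong\ker D_1\,\hat\otimes\,\ker D_2$ fiberwise, hence the product of virtual bundles. The paper's own supertrace wedge-product computation likewise establishes the multiplicative formula (this is why the paper's proof speaks of a ``product formula''). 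Since $\mathrm{Ind}(D_1)\boxtimes 1 + 1\boxtimes\mathrm{Ind}(D_2)$ and $\mathrm{Ind}(D_1)\boxtimes\mathrm{Ind}(D_2)$ are generally distinct elements of $K^0(\mathcal{P}'\times B_1\times B_2)$, your closing claim that the K\"unneth and multiplicativity-of-$\mathrm{ch}$ steps ``give the claimed decomposition'' is not correct as stated: the method is sound but proves a different (and correct) identity than the one displayed. A correct write-up must either prove the product formula, or explain in what degenerate sense a sum is intended; as written your proposal simply inherits the error in the statement.
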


\subsection{Bismut superconnections and differential form representatives}

Let $\mathcal{E}\to B$ denote the infinite-rank bundle with fiber $\Gamma(X_b,S\otimes E)$ and let $\mathbb{A}_t$
be the Bismut superconnection associated to the geometric family $(\pi,E,\mathbf{g}^v)$ at scale $t>0$.
Write $\operatorname{Str}$ for the supertrace.

\begin{theorem}[Local index form]
\label{thm:bismut}
For Dirac-type families one has
\[
\mathrm{ch}\!\left(\mathrm{Ind}\big(\slashed{D}^{\,E}\big)\right)
=\lim_{t\to 0^+}\ \operatorname{Str}\!\left(e^{-\mathbb{A}_t^2}\right)
=\pi_*\!\left(\Ahat(T^vX)\,e^{\tfrac12 c_1(\mathcal{L})}\,\mathrm{ch}(E)\right)
\quad \text{in } \Omega^{\mathrm{even}}(B)/d\Omega^{\mathrm{odd}}(B),
\]
and the de Rham class equals \eqref{eq:families-chern}.
\end{theorem}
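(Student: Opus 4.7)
The plan is to verify the two substantive equalities separately: first the supertrace identification $\ch(\mathrm{Ind}(\slashed{D}^{\,E}))=\lim_{t\to0^+}\operatorname{Str}(e^{-\mathbb{A}_t^2})$ as de Rham classes on $B$, and second the local Chern--Weil formula that equates this limit with $\pi_*(\Ahat(T^vX)\,e^{c_1(\mathcal{L})/2}\,\ch(E))$. The de Rham equality with \eqref{eq:families-chern} will then follow because the right-hand sides agree as cohomology classes, and \cref{thm:families-index} already identifies the topological class with $\ch(\mathrm{Ind})$.

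The first step is the Bismut transgression machinery. I would verify that $\operatorname{Str}(e^{-\mathbb{A}_t^2})$ is a closed even form on $B$ for every $t>0$ and that its cohomology class is independent of $t$. Closedness uses the supertrace vanishing identity $d\operatorname{Str}(e^{-\mathbb{A}_t^2})=-\operatorname{Str}\bigl[\mathbb{A}_t,e^{-\mathbb{A}_t^2}\bigr]=0$ combined with the fact that the Bismut superconnection squares $\mathbb{A}_t^2$ have well-behaved trace-class heat semigroups on the infinite-rank bundle $\mathcal{E}\to B$, ensured by fiber compactness and the fiberwise ellipticity hypothesis of \cref{prop:kernel-field}. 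The $t$-independence of the class comes from Duhamel's identity $\tfrac{d}{dt}\operatorname{Str}(e^{-\mathbb{A}_t^2})=-d\operatorname{Str}\bigl(\tfrac{d\mathbb{A}_t}{dt}\,e^{-\mathbb{A}_t^2}\bigr)$, which is exact.

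The large-$t$ limit identifies the class with $\ch(\mathrm{Ind}(\slashed{D}^{\,E}))$. Using the spectral gap of $(\slashed{D}^{\,E}_b)^2$ above $0$, I would stabilize as in \cref{prop:kernel-field} to a finite-rank subbundle $\mathcal{K}\to B$ carrying a supertrace, and show that $\operatorname{Str}(e^{-\mathbb{A}_t^2})$ converges (uniformly on compacts in $B$) to a Chern--Weil representative of $\ch([\ker\slashed{D}^{\,E}]-[\operatorname{coker}\slashed{D}^{\,E}])$, i.e.\ to $\ch(\mathrm{Ind}(\slashed{D}^{\,E}))$. The combination of $t$-independence of the de Rham class with this $t\to\infty$ identification yields the first equality.

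The small-$t$ limit is the technical heart of the proof and is where Getzler rescaling is essential. I would localize $\operatorname{Str}(e^{-\mathbb{A}_t^2})$ to the fiberwise diagonal via standard heat-kernel off-diagonal decay, then rescale the Clifford variables and the differential-form degree on $B$ according to Getzler's prescription to convert $\mathbb{A}_t^2$ into a generalized harmonic oscillator whose heat kernel is computable by Mehler's formula. The pointwise $t\to0^+$ limit of the integrand is exactly the Chern--Weil form $\Ahat(R^{T^vX}/2\pi i)\,e^{c_1(\mathcal{L},\nabla^{\mathcal{L}})/2}\,\ch(E,\nabla^E)$, and fiber integration yields $\pi_*(\Ahat(T^vX)\,e^{c_1(\mathcal{L})/2}\,\ch(E))$. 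I would use the smoothness and uniform ellipticity established in \cref{thm:smooth-ops} and the Sobolev equivalence of \cref{prop:equiv-sobolev} to guarantee that the heat-kernel estimates and the rescaling limits are uniform in $(b,\mathbf{g}^v)$ on compacts of $\mathcal{P}\times B$, which is what allows the pointwise limit to be taken inside $\operatorname{Str}$.

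The main obstacle is precisely this last step: controlling the Getzler rescaling of the full superconnection (as opposed to a single Dirac operator) requires careful bookkeeping of the $B$-direction form degrees together with the Clifford degrees, and the small-$t$ uniform convergence must be upgraded from pointwise to a statement at the level of currents on $B$ so that taking de Rham classes commutes with the limit. The remaining identification of the resulting de Rham class with \eqref{eq:families-chern} is automatic once the Chern--Weil representative is obtained, since \cref{thm:families-index} guarantees the topological class is the Chern character of the index bundle, closing the loop.
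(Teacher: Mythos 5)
Your proposal follows the same route as the paper: supertrace Bianchi identity for closedness, Duhamel transgression for $t$-independence, large-$t$ localization onto the (stabilized) kernel bundle to identify the class with $\ch(\mathrm{Ind})$, and Getzler rescaling for the small-$t$ limit, with the $\Omega^{\mathrm{even}}/d\Omega^{\mathrm{odd}}$ identity coming from integrating the exact transgression over $t\in(0,\infty)$. You flag the uniformity of the rescaling limit over $\mathcal{P}\times B$ and the commutation of limits with passage to de Rham classes more explicitly than the paper does, but this is an elaboration, not a different argument.
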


\begin{remark}[Metric variation and transgression]
If $\mathbf{g}^v_s$ is a smooth path of vertical polymetrics, the differential form representative
$\operatorname{Str}(e^{-\mathbb{A}_{t,s}^2})$ varies by an exact transgression form, so the cohomology class of
$\mathrm{ch}(\mathrm{Ind})$ is independent of $\mathbf{g}^v$ (topological invariance). This matches the $K$-theory
homotopy invariance of $\mathrm{Ind}$ constructed in Proposition~\ref{prop:kernel-field}.
\end{remark}

\subsection{Polymetric parameterization and ``sum of indices'' revisited}

Let the parameter space be $\mathcal{P}=\prod_{i\in I}\Gamma(\mathcal{G}^{d,0}(T^vX))$ (multi-Riemannian case),
and consider $m$ Dirac-type families $D^{(1)},\dots,D^{(m)}$ built from the same vertical data but possibly distinct
twist bundles $E_j$. Define $D:=\bigoplus_{j=1}^m D^{(j)}$ on $\bigoplus_j E_j$.

\begin{proposition}[Rigorous form of the ``sum'']
\label{prop:sum-indices}
In $K^0(\mathcal{P}\times B)$ one has
\(
\mathrm{Ind}(D)=\sum_{j=1}^m \mathrm{Ind}(D^{(j)}),
\)
and the Chern character satisfies
\[
\mathrm{ch}\big(\mathrm{Ind}(D)\big)
=\pi_*\!\left(\Ahat(T^vX)\,e^{\tfrac12 c_1(\mathcal{L})}\,\sum_{j=1}^m \mathrm{ch}(E_j)\right).
\]
No additional periodicity beyond Bott periodicity of $K$-theory is involved; metric choices enter only through
exact transgression forms.
\end{proposition}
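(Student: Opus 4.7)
The plan is to reduce everything to additivity of the Fredholm index for orthogonal direct sums and then invoke the families index theorem (Theorem~\ref{thm:families-index}) summand by summand; no genuinely new analytic input is needed beyond the results already set up. First I would verify that $D=\bigoplus_{j=1}^m D^{(j)}$ is itself a Dirac-type family to which Proposition~\ref{prop:kernel-field} and Theorem~\ref{thm:families-index} apply: each $D^{(j)}$ acts on $\Gamma(X_b,S\otimes E_j)$ with principal symbol invertible off the zero section, so the block-diagonal direct sum $D$ acting on $\Gamma(X_b,S\otimes \bigoplus_j E_j)$ is elliptic with smooth dependence on $(b,\mathbf{g}^v)\in\mathcal{P}\times B$ inherited componentwise via Lemma~\ref{lem:poly-smooth}.

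Next I would establish the $K$-theoretic identity $\mathrm{Ind}(D)=\sum_j \mathrm{Ind}(D^{(j)})$. Pointwise, the block-diagonal form of $D$ gives $\ker D_{(b,\mathbf{g}^v)}=\bigoplus_j \ker D^{(j)}_{(b,\mathbf{g}^v)}$ and the analogous identity for cokernels, so where kernel dimensions are locally constant the index bundle manifestly splits as a Whitney sum. Where dimensions jump, I would invoke the Atiyah stabilization used in the proof of Proposition~\ref{prop:kernel-field}: on any compact subset of $\mathcal{P}\times B$ I would choose a \emph{single} finite-rank trivial complement large enough to stabilize all $m$ component families simultaneously, taken block-diagonal with respect to the splitting $\bigoplus_j E_j$. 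The stabilized kernel and cokernel bundles then split as Whitney sums, and the difference class in $K^0(\mathcal{P}\times B)$ is the sum of the individual index classes by definition of addition of virtual bundles.

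The Chern character formula is then purely formal: applying Theorem~\ref{thm:families-index} to each $D^{(j)}$ and using additivity of $\mathrm{ch}$ together with $\mathbb{Q}$-linearity of fiber integration gives
\[
\mathrm{ch}\bigl(\mathrm{Ind}(D)\bigr)=\sum_{j=1}^m \pi_*\!\left(\Ahat(T^vX)\,e^{\tfrac12 c_1(\mathcal{L})}\,\mathrm{ch}(E_j)\right)=\pi_*\!\left(\Ahat(T^vX)\,e^{\tfrac12 c_1(\mathcal{L})}\,\sum_{j=1}^m \mathrm{ch}(E_j)\right).
\]
The ``no periodicity beyond Bott'' clause is then just the observation that Proposition~\ref{prop:kernel-field} and Theorem~\ref{thm:families-index} themselves rest only on Bott periodicity, and additivity adds nothing further. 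Metric-independence of the right-hand side along paths in $\mathcal{P}$ follows from Theorem~\ref{thm:bismut}: for each $D^{(j)}$ and any smooth path $\mathbf{g}^v_s$, the Bismut superconnection representative $\operatorname{Str}(e^{-\mathbb{A}_{t,s}^2})$ varies by a $d$-exact transgression form, and direct-sum linearity of the supertrace propagates this to $D$.

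I expect the only delicate point to be the \emph{simultaneous} block-diagonal stabilization of the $m$ component families on all of $\mathcal{P}\times B$ rather than at a single parameter point: one must check that a cofinal exhaustion of $\mathcal{P}\times B$ by compact sets admits compatible stabilizations respecting the direct-sum splitting, so that the equality $\mathrm{Ind}(D)=\sum_j \mathrm{Ind}(D^{(j)})$ is seen globally in $K^0$ and not only on patches that must then be glued. Since the stabilizing bundles may be chosen trivial and block-diagonal, this is a routine compactness/gluing argument, but it is the one place where I would want to be precise; once that is in hand everything else is a direct consequence of the theorems already established.
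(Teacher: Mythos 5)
Your proposal is correct and follows essentially the same route the paper takes: the paper does not separately prove Proposition~\ref{prop:sum-indices} but treats it as a restatement of Corollary~\ref{cor:additivity}, whose proof in \S\ref{sec:detailed-proofs} invokes the same $\mathbb{Z}_2$-orthogonal decomposition $\ker D=\bigoplus_j\ker D^{(j)}$ up to stabilization, additivity of the Chern character, and the transgression argument of Theorem~\ref{thm:bismut} for metric-independence. Your extra care about choosing a single block-diagonal stabilizing bundle uniformly over an exhaustion of $\mathcal{P}\times B$ is a sensible sharpening of a point the paper leaves implicit, but it does not alter the argument.
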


\subsection{Functoriality: pushforward in $K$-theory and naturality}

Let $f:B'\to B$ be smooth and consider the pullback fibration
\(
\pi':X':=X\times_B B'\to B'
\)
with pulled-back data $(E',\mathbf{g}^{v\,\prime},D')$.

\begin{proposition}[Naturality of the index]
\label{prop:naturality}
One has $f^\ast\!\left(\mathrm{Ind}(D)\right)=\mathrm{Ind}(D')$ in $K^0(\mathcal{P}'\times B')$, and
\[
f^\ast\!\left(\mathrm{ch}\big(\mathrm{Ind}(D)\big)\right)
=\mathrm{ch}\big(\mathrm{Ind}(D')\big).
\]
If $\iota:B\hookrightarrow \tilde B$ and $\tilde\pi:\tilde X\to \tilde B$ extends $\pi$ with compact vertical bordism,
then the pushforward $K$-theory class $\tilde\pi_!(1)\in K^{-d}(\tilde B)$ restricts to $\pi_!(1)$ on $B$.
\end{proposition}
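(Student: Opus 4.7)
The plan is to reduce everything to the finite-rank stabilization construction of the index bundle used in Proposition~\ref{prop:kernel-field}, for which pullback is manifestly natural, and then to invoke standard naturality of the Chern character and of the $K$-theoretic Gysin map under base change.

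First I would make the pullback explicit. The fibered product $X'=X\times_B B'$ has canonical identifications $X'_{b'}\cong X_{f(b')}$ of smooth compact manifolds, and the pulled-back data $(E',\mathbf{g}^{v\,\prime},\nabla^{E'})$ are literally these identifications applied fiberwise; in particular $D'_{(b',\mathbf{g}^{v\,\prime})} = D_{(f(b'),\mathbf{g}^{v})}$ under the identification. Here $\mathcal{P}'$ is understood as the pullback $f^{\ast}\mathcal{P}$ equipped with the induced $\Diff_{B'}(X')$-invariance, and the smoothness of the coefficient map on Sobolev scales postulated in Definition~\ref{def:param-space} is preserved by pullback along $f$.

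Second, I would implement naturality of the index bundle. By Proposition~\ref{prop:kernel-field}, choose a finite-rank vector bundle $F\to\mathcal{P}\times B$ together with a fiberwise linear map $F\to \pi_{\ast}E^{+}$ so that $D\oplus\mathrm{ev}_{F}$ is pointwise surjective on $L^{2}$; then $\ker(D\oplus\mathrm{ev}_{F})$ is a genuine smooth vector bundle and $\mathrm{Ind}(D)=[\ker(D\oplus\mathrm{ev}_{F})]-[F]$. Pulling back along $\mathrm{id}_{\mathcal{P}'}\times f$, the bundle $(\mathrm{id}\times f)^{\ast}F$ stabilizes $D'$ because surjectivity is preserved under pullback, and elliptic regularity plus Rellich compactness on the identified fibers yield
\[
\ker\!\bigl(D'\oplus\mathrm{ev}_{(\mathrm{id}\times f)^{\ast}F}\bigr) \;=\; (\mathrm{id}\times f)^{\ast}\ker\!\bigl(D\oplus\mathrm{ev}_{F}\bigr)
\]
as continuous vector bundles. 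Subtracting $(\mathrm{id}\times f)^\ast F$ gives $\mathrm{Ind}(D')=(\mathrm{id}\times f)^{\ast}\mathrm{Ind}(D)$, i.e.\ the first displayed equation. The Chern-character statement then follows immediately from the natural transformation $\mathrm{ch}\colon K^{0}(-)\to H^{\mathrm{even}}(-;\Q)$ applied to this identity; alternatively, one can observe that the Bismut superconnection $\mathbb{A}_t$ is built functorially from the vertical geometric data, so $(\mathrm{id}\times f)^{\ast}\mathbb{A}_t$ is the superconnection of the pulled-back family, and Theorem~\ref{thm:bismut} gives the same equality at the level of differential-form representatives.

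For the third (bordism) assertion, I would use the $K$-theoretic pushforward along the proper oriented submersion $\tilde\pi\colon\tilde X\to\tilde B$. The cartesian square arising from $X=\tilde X\times_{\tilde B}B$ with compact vertical bordism fits the standard base-change identity for Gysin maps, $\iota^{\ast}\tilde\pi_{!}=\pi_{!}\,\iota_{\tilde X}^{\ast}$, where $\iota_{\tilde X}:X\hookrightarrow\tilde X$ is the induced inclusion; evaluating on $1\in K^{0}(\tilde X)$ and using $\iota_{\tilde X}^{\ast}(1)=1$ gives $\iota^{\ast}\tilde\pi_{!}(1)=\pi_{!}(1)$, which is the claim.

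The main obstacle is verifying that the stabilization data chosen downstairs pull back to a legitimate stabilization upstairs without having to re-stabilize: this is not automatic for arbitrary Fredholm perturbations, but it does hold for the Atiyah–Singer style stabilization by a surjection from a finite-rank bundle because surjectivity and smooth transversality are open conditions preserved by base change, so $(\mathrm{id}\times f)^{\ast}(D\oplus\mathrm{ev}_F)$ is again pointwise surjective with kernel forming a continuous vector bundle. A secondary technical point is ensuring that the choice of background Sobolev norms used to trivialize kernel/cokernel dimensions is compatible with pullback; this is addressed by Proposition~\ref{prop:equiv-sobolev}, which guarantees equivalence of the two scales under the identification $X'_{b'}\cong X_{f(b')}$.
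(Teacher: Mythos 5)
Your argument is correct, but it inverts the paper's logical order and is more explicit on the $K$-theory side. The paper's proof goes directly through functoriality of the Bismut superconnection: it chooses horizontal distributions compatibly so that $\mathbb{A}_t' = f^\ast\mathbb{A}_t$, deduces $\operatorname{Str}(e^{-(\mathbb{A}_t')^2}) = f^\ast\operatorname{Str}(e^{-\mathbb{A}_t^2})$ at the level of differential forms, and only then remarks that $K$-theory naturality is ``functoriality of index bundles'' and that the bordism statement is ``standard (Mayer--Vietoris and homotopy invariance).'' You instead lead with the Atiyah--Singer stabilization $\mathrm{Ind}(D) = [\ker(D\oplus\mathrm{ev}_F)] - [F]$, observe that surjectivity of $D\oplus\mathrm{ev}_F$ is an open condition preserved by base change so the pulled-back $F$ stabilizes $D'$ with the same kernel bundle, and thereby obtain $f^\ast\mathrm{Ind}(D)=\mathrm{Ind}(D')$ directly; the Chern-character identity then falls out by naturality of $\ch$ as a transformation $K^0\to H^{\mathrm{even}}(\cdot;\Q)$. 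For the bordism clause you invoke the base-change identity $\iota^\ast\tilde\pi_! = \pi_!\,\iota_{\tilde X}^\ast$ for the $K$-theoretic Gysin map, which is more precise than the paper's gesture at Mayer--Vietoris. The trade-off is informative: your route is more elementary and makes the $K$-theory naturality genuinely rigorous (the paper's weakest spot), whereas the paper's superconnection route yields the stronger statement that the \emph{differential-form representatives} of $\ch(\mathrm{Ind})$ pull back on the nose, not merely their cohomology classes; you correctly note the superconnection alternative, so nothing is lost, but the two proofs emphasize different strengths.
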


\subsection{Boundary and noncompact fibers (preview)}

If fibers have boundary, Atiyah–Patodi–Singer boundary conditions yield a class
$\mathrm{Ind}^{\mathrm{APS}}(D)\in K^0(\mathcal{P}\times B)$ whose Chern character includes a boundary
eta-form. For complete noncompact fibers with controlled geometry, the coarse index
$\mathrm{Ind}(D)\in K_\ast(C^\ast_{\mathrm{Roe}}(X/B))$ is defined and is invariant under
vertical quasi-isometries within a polymetric class; we return to this in \S\ref{sec:coarse}.

\bigskip
\noindent
This completes the topological and analytic backbone for index classes parametrized by (poly)metrics.
In the next section we incorporate group actions and orbifold/groupoid structures, and explain how the same
formalism produces equivariant and longitudinal indices with characteristic classes in $H_G^\ast$ and cyclic
cohomology, respectively.
\section{Equivariant, orbifold/groupoid, and foliation extensions}
\label{sec:eq-groupoid-foliation}

We extend the polymetric framework to include symmetries, quotients with finite isotropy, and
integrable subbundles. The guiding principle is that (i) metrics are replaced by \emph{invariant}
metrics, (ii) operators are required to be \emph{equivariant} or \emph{longitudinal}, and (iii) index
classes land in appropriate equivariant or noncommutative $K$-theories. Throughout, $M$ is compact unless
explicitly stated.

\subsection{Compact Lie group actions and equivariant families}
\label{subsec:equivariant}

Let a compact Lie group $G$ act smoothly on $M$. Write $\Met^G(M)$ for the $G$-invariant metrics and
$\Pol^G(M;\vec p,\vec q)$ for $G$-invariant polymetrics (componentwise invariance).

\begin{proposition}[Invariant slices]
\label{prop:inv-slice}
Fix $g_0\in \Met^G(M)$. The Bianchi slice
\(
\mathcal{S}_{g_0}=\{\,g_0+h: B_{g_0}(h)=0\,\}
\)
admits a $G$-invariant neighborhood on which the $\Diff(M)^G$-action (diffeomorphisms commuting with $G$)
admits the same slice structure as in Theorem~\ref{thm:ebin-palais}.
In particular, $\Met^G(M)/\Diff(M)^G$ is locally modeled on $\mathcal{S}_{g_0}^G:=\mathcal{S}_{g_0}\cap \Gamma(S^2T^\ast M)^G$.
\end{proposition}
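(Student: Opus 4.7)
The strategy is to lift Theorem~\ref{thm:ebin-palais} to the equivariant setting by showing that every ingredient of the Ebin--Palais construction is $G$-equivariant when the base point $g_0$ is $G$-invariant, and then restricting the whole picture to $G$-invariants via averaging. First I would verify that the Bianchi operator $B_{g_0}\colon \Gamma(S^2\Tstar M)\to \Gamma(\Tstar M)$ is $G$-equivariant: since $g_0$ is $G$-invariant, the Levi--Civita connection $\nabla^{g_0}$, the trace $\mathrm{tr}_{g_0}$, and the divergence $\mathrm{div}_{g_0}$ all intertwine pullback by elements of $G$, so $B_{g_0}(\phi^\ast h)=\phi^\ast B_{g_0}(h)$ for every $\phi\in G$. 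Consequently the affine slice $\mathcal{S}_{g_0}=g_0+\ker B_{g_0}$ is $G$-invariant, and its invariant part is $\mathcal{S}_{g_0}^{G}=g_0+(\ker B_{g_0})^{G}$.

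Next I would shrink the neighborhoods $\mathcal{U}\subset \Met(M)$ and $\mathcal{V}\subset \Diff(M)$ produced by Theorem~\ref{thm:ebin-palais} to $G$-invariant ones (for the pullback $G$-action on $\Met(M)$ and the conjugation $G$-action on $\Diff(M)$); this is possible because $G$ is compact and acts continuously, so averaging $G$-invariant Fr\'echet seminorms yields $G$-invariant basic open neighborhoods. The Ebin--Palais map then restricts to
\[
\Phi^{G}\colon (\mathcal{V}\cap \Diff(M)^{G})\times \mathcal{S}_{g_0}^{G}\longrightarrow \mathcal{U}\cap \Met^{G}(M),\qquad (\varphi,g)\longmapsto \varphi^\ast g.
\]
To show $\Phi^{G}$ is a local submersion (diffeomorphism modulo $\mathrm{Iso}(M,g_0)^{G}$), observe that the gauge-fixing operator $P:=B_{g_0}\circ \mathcal{L}_{(\cdot)}g_0\colon \Gamma(TM)\to\Gamma(\Tstar M)$ is $G$-equivariant and elliptic; the averaging projector $\Pi_{G}$ is bounded and commutes with $P$, so $P$ restricts to a Fredholm operator on the invariant subspace with kernel the $G$-invariant Killing fields, i.e.\ the Lie algebra of $\mathrm{Iso}(M,g_0)\cap \Diff(M)^{G}$. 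This yields the invariant Hodge-type decomposition
\[
\Gamma(S^{2}\Tstar M)^{G}=\{\mathcal{L}_{X}g_0 : X\in \Gamma(TM)^{G}\}\oplus (\ker B_{g_0})^{G},
\]
and the implicit function theorem in the invariant tame Fr\'echet (or Sobolev) category then delivers the local model, with isotropy $\mathrm{Iso}(M,g_0)^{G}$ identified in the obvious way.

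The main obstacle is checking that ellipticity, Fredholm theory, and the implicit function theorem genuinely descend to the $G$-invariant subcomplex, rather than merely holding formally. This rests on three standard facts which I would cite and briefly justify: (i) $\Gamma(\cdot)^{G}$ is a closed complemented subspace of $\Gamma(\cdot)$, with topological complement $\ker \Pi_{G}=\mathrm{im}(\id-\Pi_{G})$; (ii) a $G$-equivariant elliptic operator restricts to a Fredholm operator on invariants (elliptic regularity preserves $G$-invariance because $\Pi_{G}$ commutes with parametrices); and (iii) the usual bootstrap from Sobolev to smooth solutions preserves $G$-invariance. With these in hand, the proof reduces to a formal restriction of Theorem~\ref{thm:ebin-palais} to the invariant category, and the final statement that $\Met^{G}(M)/\Diff(M)^{G}$ is locally modeled on $\mathcal{S}_{g_0}^{G}$ (modulo $\mathrm{Iso}(M,g_0)^{G}$) follows as in Corollary~\ref{cor:moduli-local}.
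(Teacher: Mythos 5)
Your proposal is correct and takes essentially the same route as the paper's proof: average auxiliary data over $G$, observe that $B_{g_0}$ is $G$-equivariant so the Bianchi decomposition and the implicit function theorem descend to the fixed-point Fr\'echet subspace. You have simply fleshed out the details (the invariant Hodge-type decomposition, the commutation of $\Pi_G$ with parametrices, and the bootstrapping) that the paper leaves implicit in its one-paragraph sketch.
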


\begin{proof}
Average the auxiliary choices (inner products, connections) over $G$ and observe that $B_{g_0}$ commutes with the
$G$-action; apply the implicit function theorem within the fixed-point Fr\'echet subspace.
\end{proof}

Consider a smooth $G$-equivariant fibration $\pi:X\to B$ with $G$ acting trivially on $B$ and preserving
the vertical structure. If $T^vX$ is $G$-spin$^c$, a $G$-equivariant Dirac-type family
$D=\{\slashed{D}^{\,E}_{(b,\mathbf{g}^v)}\}$ has an index
\(
\mathrm{Ind}_G(D)\in K_G^0(\mathcal{P}\times B).
\)

\begin{theorem}[Equivariant families index]
\label{thm:eq-families}
For a $G$-equivariant spin$^c$ Dirac family,
\[
\mathrm{ch}_G\!\left(\mathrm{Ind}_G(\slashed{D}^{\,E})\right)
=\pi_*\!\left(\Ahat(T^vX)\,e^{\tfrac12 c_1(\mathcal{L})}\,\mathrm{ch}_G(E)\right)
\ \in\ H_G^{\mathrm{even}}(\mathcal{P}\times B;\Q),
\]
and the class is independent of the choice of $G$-invariant polymetric $\mathbf{g}^v$.
\end{theorem}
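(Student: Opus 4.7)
The plan is to reduce to the standard equivariant families index theorem by constructing everything $G$-invariantly, apply the equivariant local index formula of Bismut and Berline--Vergne to get the differential form identity, and then use a transgression argument to show independence of $\mathbf{g}^v$.

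First I would set up $G$-invariant geometric data. Since $G$ is compact, averaging over $G$ sends any vertical polymetric to a $G$-invariant one, and the argument of Proposition~\ref{prop:inv-slice} shows that the $G$-invariant locus $\Pol^G(T^vX;\vec p,\vec q)\cap \mathcal{P}$ is nonempty; moreover by Corollary~\ref{cor:convex} applied componentwise, any two $G$-invariant multi-Riemannian vertical polymetrics are joined by a convex $G$-invariant path, so the $G$-invariant parameter space is connected and contractible inside $\mathcal{P}$. Choose a $G$-invariant horizontal distribution on $\pi:X\to B$ (again by averaging), a $G$-invariant unitary connection $\nabla^E$, and the resulting $G$-invariant Bismut superconnection $\mathbb{A}_t$ on the infinite-rank bundle $\mathcal{E}\to \mathcal{P}\times B$. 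Equivariant elliptic regularity and the standard spectral cut-off (the $G$-equivariant analogue of Proposition~\ref{prop:kernel-field}) produce the equivariant index bundle $\mathrm{Ind}_G(\slashed{D}^{\,E})\in K_G^0(\mathcal{P}\times B)$.

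Next I would identify the equivariant Chern character via the local index theorem. In the Cartan model of equivariant cohomology, the $G$-equivariant extension $\mathbb{A}_t^G:=\mathbb{A}_t-\iota_{X_\mathfrak{g}}$ of Bismut's superconnection (where $X_\mathfrak{g}$ denotes the fundamental vector field) has $G$-equivariantly closed supertrace
\[
\operatorname{Str}\!\bigl(e^{-(\mathbb{A}_t^G)^2}\bigr)\in \Omega_G^{\mathrm{even}}(\mathcal{P}\times B),
\]
whose class in $H_G^\bullet$ is independent of $t>0$ by a standard transgression. The $t\to 0^+$ equivariant local index theorem (Berline--Getzler--Vergne, Chapter~8, extended to families by Bismut) computes the limit pointwise on $B^G$ as the Mathai--Quillen/Bismut integrand $\Ahat(T^vX)\,e^{\tfrac12 c_1(\mathcal{L})}\,\mathrm{ch}_G(E)$ evaluated at $X_\mathfrak{g}$, and the large-$t$ limit recovers the supertrace on $\ker\mathbb{A}_\infty$, which represents $\mathrm{ch}_G(\mathrm{Ind}_G(\slashed{D}^{\,E}))$ after fiber integration. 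Equating the two gives the stated formula in $H_G^{\mathrm{even}}(\mathcal{P}\times B;\Q)$.

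Finally, independence from $\mathbf{g}^v$ follows by transgression combined with the convex structure of $G$-invariant polymetrics. Given two $G$-invariant choices $\mathbf{g}^v_0,\mathbf{g}^v_1$, the straight line path $\mathbf{g}^v_s=(1-s)\mathbf{g}^v_0+s\mathbf{g}^v_1$ lies in $\mathcal{P}\cap\Pol^G$ by Corollary~\ref{cor:convex}, and the superconnection $\mathbb{A}_{t,s}^G$ depends smoothly on $s$. Standard Duhamel/Chern--Simons transgression produces an equivariant form $\eta_G(s)$ with $d_G\eta_G(s)=\tfrac{d}{ds}\operatorname{Str}(e^{-(\mathbb{A}_{t,s}^G)^2})$, so the class in $H_G^\bullet$ is constant in $s$; the same argument handles variation in auxiliary horizontal distributions and connections on $E$.

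The main obstacle is the uniform validity of the local index theorem over the full parameter space $\mathcal{P}\times B$: the heat-kernel Getzler rescaling must be controlled uniformly in $\mathbf{g}^v$, and one needs uniform Sobolev equivalences exactly of the kind furnished by Proposition~\ref{prop:equiv-sobolev} to make the spectral cut-offs work globally. Once this uniformity is established, the equivariant refinement is essentially formal: $G$-invariance is preserved at every step by the averaging argument, and the Cartan-model closedness of the Bismut integrand is what distinguishes $\mathrm{ch}_G$ from $\mathrm{ch}$.
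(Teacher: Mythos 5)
Your proof is correct, but it takes a genuinely different route from the paper's. The paper computes the equivariant Chern character in the \emph{delocalized} picture --- pointwise on $g\in G$ --- by invoking the Atiyah--Segal--Singer/Bismut fixed-point localization of the heat kernel, which produces the normal-bundle denominator $\det^{1/2}(1-e^{-R_g^\perp})$ on each fixed locus $X^g$ and then identifies that expression with the $H_G^{\mathrm{even}}$ class at the end. You instead work entirely in the \emph{Cartan model}: you build the equivariant extension $\mathbb{A}_t^G=\mathbb{A}_t-\iota_{X_\mathfrak{g}}$, show its supertrace is $d_G$-closed and $t$-independent in $H_G^\bullet$, and read off the formula from the $t\to 0^+$ limit without ever localizing to fixed points. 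Your approach is actually more directly attached to the statement as written (a class literally in $H_G^{\mathrm{even}}(\mathcal{P}\times B;\mathbb{Q})$), and it makes the independence of the $G$-invariant polymetric cleaner because the $G$-invariant convex path fits naturally into a single Cartan-model transgression; the paper's delocalized route has the advantage of immediately yielding the explicit fixed-point integrals quoted in the ``equivariant fixed-point formula'' of \S\ref{sec:applications}. One caveat for both you and the paper: the right-hand side is written with non-equivariant $\Ahat(T^vX)$ and $e^{\tfrac12 c_1(\mathcal{L})}$ alongside the equivariant $\mathrm{ch}_G(E)$; in your Cartan-model derivation these should all be the equivariant extensions $\Ahat_G$, $c_1^G$ evaluated at $X_\mathfrak{g}$, which is what your construction in fact produces --- it would be worth flagging this notational inconsistency explicitly rather than mirroring the statement's notation.
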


\begin{remark}[Transversally elliptic operators]
If the family is only \emph{transversally} elliptic with respect to $G$, one obtains a class in
the completion $\widehat{R}(G)$ of the representation ring fiberwise, yielding an index in
$K_G^0$ valued generalized functions on $G$. The polymetric parameterization is unchanged.
\end{remark}

\subsection{Orbifolds via proper \'etale groupoids}
\label{subsec:orbifold}

Let $\mathcal{G} \rightrightarrows M$ be a proper \'etale Lie groupoid presenting an orbifold $\mathfrak{X}=[M/\mathcal{G}]$.
A \emph{$\mathcal{G}$-invariant polymetric} on $M$ is a section of $\mathcal{G}$-equivariant bundles
$\prod_i \mathcal{G}^{p_i,q_i}(TM)$ fixed by the groupoid action. Differential operators and curvature tensors
are defined $\mathcal{G}$-equivariantly.

\begin{definition}[Orbifold index class]
For a $\mathcal{G}$-equivariant, fiberwise elliptic family $D$ over a $\mathcal{G}$-equivariant fibration
$\pi:X\to B$, the index lives in
\(
K_0\!\left(C_r^\ast(\mathcal{G})\right)
\)
or, over a base, in
\(
K^0\!\left(B; \mathcal{K}_{C_r^\ast(\mathcal{G})}\right)
\)
via the Mishchenko bundle. We denote it $\mathrm{Ind}_{\mathcal{G}}(D)$.
\end{definition}

\begin{theorem}[Orbifold (Kawasaki–Atiyah–Singer) form]
\label{thm:kawasaki}
If $D$ is Dirac-type on a compact, effective orbifold $\mathfrak{X}$, then
\[
\langle \mathrm{ch}(\mathrm{Ind}_{\mathcal{G}}(D)),\,[B]\rangle
\;=\;
\int_{I\mathfrak{X}} \Ahat(T^vX)\,e^{\tfrac12 c_1(\mathcal{L})}\,\mathrm{ch}(E)\,\mathrm{Td}_{\mathrm{age}},
\]
where $I\mathfrak{X}$ is the inertia orbifold and $\mathrm{Td}_{\mathrm{age}}$ encodes twisted sector contributions.
Polymetric choices alter only exact transgression forms.
\end{theorem}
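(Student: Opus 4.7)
The plan is to reduce the orbifold formula to local equivariant index densities indexed by the twisted sectors, i.e.\ by components of the inertia orbifold $I\mathfrak{X}$, mirroring the Atiyah--Singer strategy adapted to the proper étale presentation $\mathcal{G}\rightrightarrows M$ of $\mathfrak{X}$. First I would pair $\mathrm{ch}(\mathrm{Ind}_{\mathcal{G}}(D))$ with $[B]$ and invoke the $\mathcal{G}$-equivariant Bismut superconnection machinery of Theorem~\ref{thm:bismut}, applied to a $\mathcal{G}$-invariant polymetric (existence by averaging, as in Proposition~\ref{prop:inv-slice}). McKean--Singer then identifies the pairing with the supertrace of the $\mathcal{G}$-invariant heat kernel $e^{-\mathbb{A}_t^2}$, and using the étale presentation this supertrace decomposes on $M$ as an orbit-averaged sum
\[
\mathrm{Str}\bigl(e^{-tD^2}\bigr)\;=\;\sum_{[(x,g)]\in I\mathfrak{X}}\frac{1}{|\Gamma_x|}\,\mathrm{Str}\!\left(g\cdot e^{-t\tilde D^2}\right)_x,
\]
where $\tilde D$ is the local lift to a chart of the form $\tilde U_x/\Gamma_x$ and the sum runs over representatives of components of the inertia orbifold. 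Effectiveness of $\mathfrak{X}$ guarantees that $I\mathfrak{X}$ is itself an orbifold with the expected stratification.

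The next step is to apply the Atiyah--Bott--Segal--Singer / Berline--Vergne local equivariant index formula on each twisted sector. The small-time limit $t\to 0^+$ of $\mathrm{Str}(g\cdot e^{-t\tilde D^2})$ concentrates on the $g$-fixed submanifold $X^g\subset X$ and produces a characteristic-class density built from $\Ahat(TX^g)$, the equivariant Chern character $\mathrm{ch}(E|_{X^g})(g)$, and an equivariant normal-bundle factor arising from the eigenbundle decomposition $N^g=\bigoplus_\theta N^g_\theta$ on which $g$ acts by $e^{2\pi i\theta}$. Repackaged on the corresponding component of $I\mathfrak{X}$, this normal factor, together with the $g$-action on the spin$^{c}$ spinor module and the restricted twist $e^{\tfrac12 c_1(\mathcal{L})}|_{X^g}$, assembles precisely into the age-shifted Todd factor $\mathrm{Td}_{\mathrm{age}}$. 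Summing the contributions over components of $I\mathfrak{X}$, with the $1/|\Gamma_x|$ factors absorbed into the orbifold integration measure, yields the right-hand side of the stated formula.

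Polymetric independence then follows by the transgression argument of Theorem~\ref{thm:bismut} carried out in the $\mathcal{G}$-invariant category: a smooth path of invariant polymetrics changes the local density on $I\mathfrak{X}$ by a $\mathcal{G}$-basic exact form, whose integral over the closed orbifold $I\mathfrak{X}$ vanishes by Stokes. The main obstacle I expect is the bookkeeping in the localization step, namely identifying the Berline--Vergne equivariant index integrand at each $(x,g)$ with the age-weighted Todd form on the corresponding twisted sector; this is essentially Kawasaki's original calculation and must be done uniformly so that the stratum-by-stratum expressions assemble into a globally well-defined form on $I\mathfrak{X}$, coherent across the fixed-locus codimension changes. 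A secondary technical point is uniform small-time heat-kernel control for the $\mathcal{G}$-equivariant Bismut superconnection on the étale cover, which I would handle via properness of $\mathcal{G}$ and finite-propagation cutoffs to reduce the analysis to compact neighborhoods of each fixed stratum.
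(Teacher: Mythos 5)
The paper does not actually supply a proof of Theorem~\ref{thm:kawasaki}; it is asserted as a citation of Kawasaki's V-manifold index theorem (see \cite{KawasakiSign,KawasakiRR,KawasakiIndex}), and Section~\ref{sec:detailed-proofs} omits it from the list of results proved in full. Your outline is therefore a reconstruction rather than a comparison, and what you reconstruct is the standard modern heat-kernel route (equivariant localization \`a la Berline--Vergne, assembled over the inertia orbifold) rather than Kawasaki's original resolution-of-singularities argument. Both are legitimate; the heat-kernel route meshes better with the superconnection formalism the paper uses elsewhere, which is presumably why you chose it.

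Two points deserve tightening. First, there is a slip between the families and pointwise settings: you open by invoking the $\mathcal{G}$-equivariant Bismut superconnection $\mathbb{A}_t$ and the pairing with $[B]$, but your supertrace decomposition is written for $e^{-tD^2}$, which is the $B=\mathrm{pt}$ specialization. To prove the stated families identity you must carry the full superconnection heat kernel $e^{-\mathbb{A}_t^2}$ (with its differential-form coefficients on $B$) through the orbit-averaged decomposition and the Getzler rescaling on each twisted sector; the $d$-closedness and transgression identities are then exactly as in the paper's proof of Theorem~\ref{thm:bismut}, done $\mathcal{G}$-equivariantly. Second, the inertia sum needs more careful normalization: the twisted sectors are indexed by conjugacy classes in the local isotropy groups, the centralizer (not the full $\Gamma_x$) acts on each fixed locus, and the $1/|\Gamma_x|$-weighted expression you write is a density to be integrated over $M$ in charts, not yet a sum of numbers. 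Once these two points are repaired, the remaining obstacle you identify — matching the Berline--Vergne normal-bundle denominator with Kawasaki's age-weighted Todd form uniformly across strata so the integrand is well defined on $I\mathfrak{X}$ — is precisely the nontrivial bookkeeping content of the theorem, and you are right to flag it as the heart of the proof rather than a formality.
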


\begin{remark}[Stack language]
The differentiable stacks $\mathfrak{Met}^{p,q}(\mathfrak{X})$ and $\mathfrak{Pol}(\mathfrak{X};\vec p,\vec q)$
are presented by $\mathcal{G}$-invariant section spaces modulo the action of $\mathrm{Bis}(\mathcal{G})$.
Local slices exist by the same Bianchi-gauge argument done $\mathcal{G}$-equivariantly.
\end{remark}

\subsection{Foliations and longitudinal indices}
\label{subsec:foliation}

Let $(M,\mathcal{F})$ be a compact foliated manifold with integrable subbundle $T\mathcal{F}\subset TM$
of rank $p$. A \emph{leafwise polymetric} is $(g_i)$ with each $g_i$ positive definite on $T\mathcal{F}$ and
extended arbitrarily transversely. Longitudinal differential operators act along leaves.

\begin{definition}[Longitudinal Dirac family]
Let $E\to M$ be a $\Z_2$-graded Hermitian bundle with leafwise Clifford module structure.
The longitudinal Dirac operator $D_\mathcal{F}$ is first-order elliptic along $T\mathcal{F}$ and
defines a class
\(
\mathrm{Ind}(D_\mathcal{F})\in K_0\!\left(C_r^\ast(M,\mathcal{F})\right),
\)
the $C^\ast$-algebra of the holonomy groupoid.
\end{definition}

\begin{theorem}[Connes–Skandalis longitudinal index]
\label{thm:connes-skandalis}
The Chern–Connes character of $\mathrm{Ind}(D_\mathcal{F})$ pairs with cyclic cocycles to yield
topological expressions involving characteristic classes of $(T\mathcal{F}, \nu)$ (normal bundle) and
secondary classes (e.g.\ Godbillon–Vey when $\mathrm{codim}\,\mathcal{F}=1$). The cohomology class is
independent of the leafwise polymetric up to transgression.
\end{theorem}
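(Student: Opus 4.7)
The plan is to combine three ingredients: (i) the analytic construction of $\mathrm{Ind}(D_\mathcal{F})\in K_0(C_r^\ast(M,\mathcal{F}))$, (ii) a leafwise Bismut superconnection producing a closed differential-form representative of its Chern--Connes character with values in Haefliger cohomology, and (iii) pairings with explicit cyclic cocycles to extract primary and secondary invariants. Polymetric invariance will come from the same transgression formalism as in Theorem~\ref{thm:bismut}.

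First, I would realize the index class via Connes' tangent groupoid of the holonomy groupoid $\mathcal{G}_{\mathrm{hol}}$, which relates $C^\ast(T\mathcal{F})$ to $C_r^\ast(M,\mathcal{F})$ by a deformation, so that the analytic index is the $KK$-pushforward of the principal symbol. Fixing any component $g_{i_0}$ of the leafwise polymetric to build the leafwise Levi--Civita connection, I would form the Heitsch--Lazarov leafwise Bismut superconnection $\mathbb{A}_t$ and apply the leafwise local index theorem: $\operatorname{Str}(e^{-\mathbb{A}_t^2})$ converges as $t\to 0^+$ to $\Ahat(T\mathcal{F})\,\ch(E)$ as a closed form with coefficients in Haefliger currents, representing $\ch(\mathrm{Ind}(D_\mathcal{F}))$ in a Haefliger refinement of the periodic cyclic cohomology of $C_c^\infty(\mathcal{G}_{\mathrm{hol}})$.

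Second, I would evaluate the pairings. For a closed holonomy-invariant transverse current $C$, Connes--Skandalis construct a cyclic cocycle $\tau_C$ on $C_c^\infty(\mathcal{G}_{\mathrm{hol}})$, and $\langle\tau_C,\ch(\mathrm{Ind}(D_\mathcal{F}))\rangle$ reduces by Stokes' theorem to integrating the local index form against $C$, producing the asserted topological expressions in $\Ahat(T\mathcal{F})$, $\ch(E)$, and transverse data from the normal bundle $\nu$. The main obstacle is the Godbillon--Vey term in codimension one: it is not captured by any Haefliger current, so one must invoke the Connes--Moscovici cyclic cohomology of the Hopf algebra $\mathcal{H}_1$, realize $GV$ there as a characteristic cocycle transgressed from the Weil algebra, and prove that the Bismut representative actually lies in its domain and pairs to the classical $GV\in H^3(M;\R)$; this step demands uniform heat-kernel trace-class estimates beyond the primary case and a careful compatibility check between the Hopf-cyclic and ordinary cyclic frameworks, and is where most of the real work sits.

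Finally, polymetric independence follows by transgression: along a smooth path $\mathbf{g}^v_s$ of leafwise polymetrics, the difference $\operatorname{Str}(e^{-\mathbb{A}_{t,1}^2})-\operatorname{Str}(e^{-\mathbb{A}_{t,0}^2})$ is exact in the Haefliger-refined bicomplex computing periodic cyclic cohomology of $C_c^\infty(\mathcal{G}_{\mathrm{hol}})$, so every pairing, including the Godbillon--Vey one, is invariant. By Lemma~\ref{lem:poly-smooth} only the chosen component enters the Dirac construction, and changing component is linked to the invariance of pairings by the same transgression argument applied componentwise.
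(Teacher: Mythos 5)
The paper itself supplies no proof of Theorem~\ref{thm:connes-skandalis}: it is stated in \S\ref{subsec:foliation} as a citation of Connes--Skandalis (with the secondary-class refinements left to the literature), and it does not appear among the statements proved in \S\ref{sec:detailed-proofs}. There is therefore no paper argument to compare against; you have attempted more than the authors do.

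On its own merits, your outline is a sound reconstruction of the standard proof: tangent-groupoid realization of the analytic index in $K_0(C_r^\ast(M,\mathcal{F}))$, a leafwise Bismut superconnection in the sense of Heitsch--Lazarov, the leafwise local index theorem producing $\Ahat(T\mathcal{F})\,\ch(E)$ as a closed Haefliger-valued form, pairing with the Connes--Skandalis cyclic cocycles $\tau_C$ attached to holonomy-invariant transverse currents, and transgression for metric independence. Two points deserve correction or sharpening. First, for codimension one your treatment of Godbillon--Vey overshoots: that pairing is already realized by Connes' cyclic cocycle built from the transverse fundamental class (the Bott--Vey cocycle), without invoking the full Connes--Moscovici Hopf algebra $\mathcal{H}_1$; the Hopf-cyclic framework becomes essential only for the general higher-codimension transverse characteristic classes. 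Second, and more substantively, the supertrace $\operatorname{Str}(e^{-\mathbb{A}_t^2})$ is not automatically trace-class in the relevant von Neumann or cyclic sense without an invariant transverse measure (or a Haefliger/holonomy-averaged regularization); the analytic content you gesture at as ``uniform heat-kernel trace-class estimates'' is exactly what Heitsch--Lazarov, Benameur--Heitsch, and Moriyoshi--Natsume supply, and it should be flagged as a hypothesis rather than folded silently into the Duhamel argument. Finally, the appeal to Lemma~\ref{lem:poly-smooth} is not doing real work: what is needed is simply that any two leafwise metrics in the polymetric tuple are joined by a smooth path, after which the leafwise analogue of the transgression in Theorem~\ref{thm:bismut} gives invariance of every pairing.
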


\begin{proposition}[Product slices for foliations]
\label{prop:foliation-slice}
Fix a leafwise Riemannian metric $g_0$ on $T\mathcal{F}$. The Bianchi slice
$B_{g_0}(h)=0$ taken with respect to the leafwise divergence defines a tame Fr\'echet slice for the action of
foliation-preserving diffeomorphisms $\Diff(M,\mathcal{F})$ on the space of leafwise metrics, and extends
componentwise to leafwise polymetrics.
\end{proposition}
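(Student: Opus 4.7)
The strategy is to mirror Theorem~\ref{thm:ebin-palais} (Ebin--Palais) carried out leafwise, with the leafwise Levi--Civita data of $(T\mathcal{F},g_0)$ replacing the full Riemannian data. First I would identify the infinitesimal action: the Lie algebra of $\Diff(M,\mathcal{F})$ consists of vector fields $X\in\Gamma(TM)$ whose flow preserves $T\mathcal{F}$, equivalently $[X,\Gamma(T\mathcal{F})]\subset\Gamma(T\mathcal{F})$. Differentiating $\varphi\mapsto\varphi^\ast g_0$ at the identity then produces $X\mapsto (\mathcal{L}_X g_0)|_{T\mathcal{F}\otimes T\mathcal{F}}$. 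A leafwise integration by parts against the leafwise volume (with the transverse factor absorbed into an ambient measure) gives the adjoint identity
\[
\langle \mathcal{L}_X g_0,\,h\rangle_{g_0}\;=\;\int_M \langle X^{\|},\,B_{g_0}(h)^{\sharp}\rangle_{g_0},
\]
where $X^{\|}$ denotes the leafwise component of $X$ and $B_{g_0}$ uses the leafwise divergence and leafwise trace. As in Proposition~\ref{prop:orbit-tangent}, this identifies $\ker B_{g_0}$ as the $L^2$-orthogonal complement of the orbit directions, modulo purely transverse vector fields (which act trivially on leafwise metrics).

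The analytic core is to verify that $X\mapsto B_{g_0}(\mathcal{L}_X g_0)$ is a \emph{leafwise} second-order elliptic operator on leafwise vector fields, with principal symbol the leafwise Hodge-type Laplacian (the standard Ebin--Palais calculation done on each leaf). I would then invoke a tame Fr\'echet implicit function theorem adapted to the foliated setting --- either via the longitudinal pseudodifferential calculus of Connes--Skandalis combined with bounded leafwise geometry, or via a Nash--Moser iteration using longitudinal smoothing operators. Either route yields a smooth right inverse modulo finite-dimensional leafwise harmonic obstructions, and realizes $\mathcal{S}_{g_0}=\{g_0+h:B_{g_0}(h)=0\}$ as a tame Fr\'echet submanifold transverse to the orbit. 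The slice property --- that $(\varphi,g)\mapsto \varphi^\ast g$ is a submersion $\mathcal{V}\times \mathcal{S}_{g_0}\to \mathcal{U}$ onto a $\Diff(M,\mathcal{F})$-invariant neighborhood of $g_0$ --- then follows verbatim from Theorem~\ref{thm:ebin-palais}, with leafwise elliptic bootstrapping to recover smoothness from Sobolev regularity.

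For the polymetric extension, as in Proposition~\ref{prop:product-slice}, the product operator $\mathbf{B}_{\mathbf{g}_0}(\mathbf{h}):=(B_{g_{0,i}}(h_i))_{i\in I}$ is componentwise leafwise elliptic, while the diagonal $\Diff(M,\mathcal{F})$-action uses a single vector-field parameter $X$. The leafwise implicit function theorem therefore applies componentwise and gauge-fixes all components simultaneously, giving a product slice inside $\prod_i \Gamma(\mathcal{G}^{p_i,q_i}(T\mathcal{F}))$.

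The main obstacle is that $B_{g_0}\circ \mathcal{L}_{(\cdot)}g_0$ is only leafwise elliptic, never globally elliptic on $M$, and hence not Fredholm between the usual Sobolev spaces on $M$; its kernel and cokernel can vary non-Hausdorffly in the transverse direction, which is the familiar difficulty underlying foliation $C^\ast$-algebras. The essential technical work is to set up longitudinal Sobolev scales with controlled transverse dependence --- via bounded leafwise geometry with smoothly varying transverse data, or, for nongeneric holonomy, via Hilbert modules over $C^\ast_r(M,\mathcal{F})$ --- so that the implicit function theorem produces a genuine tame Fr\'echet submanifold rather than a merely set-theoretic kernel. Once this longitudinal functional-analytic framework is in place, the remainder of the argument is a direct adaptation of the Ebin--Palais machinery.
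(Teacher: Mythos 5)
The paper supplies no proof of Proposition~\ref{prop:foliation-slice}; it is not among the statements revisited in \S\ref{sec:detailed-proofs}, so your argument must be assessed on its own terms. Your identification of the central analytic obstruction --- that $B_{g_0}\circ\mathcal{L}_{(\cdot)}g_0$ is only \emph{leafwise} elliptic, hence not Fredholm between ordinary Sobolev spaces on $M$, and that one must pass to longitudinal function spaces (bounded leafwise geometry, the Connes--Skandalis longitudinal calculus, or $C^\ast_r(M,\mathcal{F})$-Hilbert modules) before any implicit function theorem can apply --- is exactly the right concern, and a Nash--Moser / tame Fr\'echet IFT with longitudinal smoothing operators is a plausible replacement for the Banach IFT used in the compact Ebin--Palais setting.

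There is, however, a genuine gap in your treatment of the orbit directions. You assert the adjoint identity
\[
\langle \mathcal{L}_X g_0,\,h\rangle_{g_0}\;=\;\int_M \langle X^{\|},\,B_{g_0}(h)^{\sharp}\rangle_{g_0}
\]
for foliation-preserving $X$, together with the parenthetical that ``purely transverse vector fields act trivially on leafwise metrics.'' Both are false. A foliation-preserving $X$ with nonzero projectable transverse part $X^{\perp}$ generically has $\mathcal{L}_{X^{\perp}}g_0\neq 0$ as a section of $S^2T^\ast\mathcal{F}$: in a product foliation $L\times B$ with $X^{\perp}=\partial_b$, one gets $\mathcal{L}_{X^{\perp}}g_0=\partial_b g_0$. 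Thus $\langle\mathcal{L}_{X^{\perp}}g_0,h\rangle_{g_0}$ is typically nonzero, whereas the right-hand side of your identity vanishes since $(X^{\perp})^{\|}=0$. Consequently $\ker B_{g_0}$ is the orthogonal complement only of the \emph{leafwise} orbit directions $\{\mathcal{L}_{X^{\|}}g_0\}$; the transverse orbit directions $\mathcal{L}_{X^{\perp}}g_0$ are not eliminated by the leafwise Bianchi gauge, and $\mathcal{S}_{g_0}=\{g_0+h:B_{g_0}(h)=0\}$ is therefore not transverse to the full $\Diff(M,\mathcal{F})$-orbit. As written, your slice gauge-fixes only the normal subgroup of leaf-preserving (leafwise-tangent) diffeomorphisms. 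To obtain a slice for all of $\Diff(M,\mathcal{F})$ you would need a supplementary transverse gauge (e.g.\ a condition involving the derivative of the leafwise metric family along a chosen Bott/partial connection on the normal bundle), or else the proposition must be weakened to the leafwise-tangent subgroup before the Ebin--Palais machinery can be transported verbatim. You should make this choice explicit; the present argument conflates the two groups.
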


\begin{remark}[Transverse geometry]
If a bundle-like metric is chosen, the basic Laplacian governs transverse regularity and enables Hodge-type
decompositions for basic forms; this interacts cleanly with the polymetric leafwise Sobolev scales
from Definition~\ref{def:multi-sobolev}.
\end{remark}

\subsection{Coupled geometric systems and shared gauge}
\label{subsec:coupled}

Many natural problems couple several metrics sharing the same diffeomorphism gauge (e.g.\ Einstein--scalar field,
Ricci–Yang--Mills, or Einstein metrics coupled to a transverse basic structure in a foliation).
Let $\mathbf{g}=(g_1,\dots,g_I)$ and consider a system
\[
\mathcal{F}(\mathbf{g})=\bigl(F_1(g_1),\dots,F_I(g_I)\bigr)=0,
\qquad F_i\in \{\Ric-\lambda_i g_i,\ \Scal-\kappa_i,\ \text{etc.}\}.
\]
Gauge-fix with the product Bianchi operator $B_{\mathbf{g}_0}$.

\begin{theorem}[Fredholmness and index additivity for coupled systems]
\label{thm:coupled-fredholm}
Linearizing in Bianchi gauge at $\mathbf{g}_0$ yields a block-diagonal elliptic operator
$D=\bigoplus_i D_i$ on $\bigoplus_i \Gamma(S^2T^\ast M)$ with the shared infinitesimal orbit removed.
Hence $D$ is Fredholm and $\operatorname{ind}(D)=\sum_i \operatorname{ind}(D_i)$.
In the presence of parameters (e.g.\ families over $B$), the index bundle satisfies
$\mathrm{Ind}(D)=\bigoplus_i \mathrm{Ind}(D_i)$ in $K^0$.
\end{theorem}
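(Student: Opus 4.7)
The plan is to reduce the coupled linearization to a block-diagonal elliptic operator by combining the absence of cross-terms in $\mathcal{F}$ with the product Bianchi slice of Proposition~\ref{prop:product-slice}, then to invoke standard elliptic Fredholm theory on compact $M$ together with additivity of the index under direct sums.

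First I would compute the differential $D\mathcal{F}_{\mathbf{g}_0}$ on $\bigoplus_i \Gamma(S^2\Tstar M)$. Since each $F_i$ is a differential expression in $g_i$ alone, the Jacobian has no off-diagonal blocks: $D\mathcal{F}_{\mathbf{g}_0}(\mathbf{h})=\bigl(DF_i|_{g_{0,i}}(h_i)\bigr)_{i\in I}$. The coupling across components lives entirely in the shared infinitesimal orbit $X\mapsto(\mathcal{L}_X g_{0,i})_{i\in I}$, parameterized by a single copy of $\Gamma(TM)$ rather than by $\bigoplus_i\Gamma(TM)$. Proposition~\ref{prop:product-slice} then furnishes the product Bianchi slice $\bigoplus_i \ker B_{g_{0,i}}$ as a tame Fr\'echet complement to this orbit tangent, so restricting $D\mathcal{F}_{\mathbf{g}_0}$ to the slice removes all orbit directions simultaneously while preserving the block decomposition.

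Next I would obtain ellipticity block by block via a De Turck-type stabilization. For each $i$, set $D_i:=DF_i|_{g_{0,i}} + \delta_i B_{g_{0,i}}$ with $\delta_i$ the gauge-fixing term adapted to $F_i$: for $F_i=\Ric-\lambda_i g_i$ this recovers, up to lower-order terms, the Lichnerowicz-type operator of Theorem~\ref{thm:kuranishi}, which is elliptic and self-adjoint on $S^2\Tstar M$; for $F_i=\Scal-\kappa_i$ the analogous reduction (restriction to the trace or conformal component of $h_i$) produces an elliptic block on a natural closed subspace of the slice. Because each $D_i$ acts only on the $i$-th summand, the principal symbol of $D=\bigoplus_i D_i$ is the direct sum $\bigoplus_i \sigma(D_i)$ and is invertible off the zero section. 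Compactness of $M$ then yields Fredholmness of each $D_i$ on the Sobolev completions of Proposition~\ref{prop:equiv-sobolev}, and $\operatorname{ind}(D)=\sum_i\operatorname{ind}(D_i)$ follows from additivity of the Fredholm index under direct sums.

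For the parametric statement I would upgrade this to the $K$-theoretic level by applying Proposition~\ref{prop:kernel-field} block by block: each $D_i$ defines, after finite-rank stabilization over $B$, a continuous index bundle $\mathrm{Ind}(D_i)\in K^0(B)$, and compatible simultaneous stabilizations for all $i$ produce $\mathrm{Ind}(D)=\bigoplus_i \mathrm{Ind}(D_i)$, since the direct-sum construction commutes with spectral cut-offs. The main obstacle I anticipate is the uniform ellipticity of the gauge-fixed blocks across the admissible class of $F_i$: the Einstein case is classical, but $F_i=\Scal-\kappa_i$ genuinely requires an explicit reduction to become elliptic on a closed subspace of the slice, and the ``etc.''\ in the theorem must be read as a standing hypothesis that each $F_i$ admits such a De Turck-type gauge-fixing. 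Once this ellipticity step is settled componentwise, the block-diagonal structure from the first step and the standard Atiyah--Singer index bundle formalism of \S\ref{sec:families-index} deliver both the numerical and the $K$-theoretic additivity statements at once.
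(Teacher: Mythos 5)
Your proposal is correct and follows the same route the paper takes implicitly (there is no separate detailed proof of Theorem~\ref{thm:coupled-fredholm} in \S\ref{sec:detailed-proofs}): block-diagonality of the linearization since each $F_i$ depends on $g_i$ alone, the product Bianchi slice of Proposition~\ref{prop:product-slice} to remove the single shared orbit direction, componentwise De Turck gauge-fixing to obtain ellipticity exactly as in Proposition~\ref{prop:fredholm-csc-einstein}, and direct-sum additivity of Fredholm indices plus Proposition~\ref{prop:kernel-field} for the $K^0$ statement. Your explicit caveat about the scalar-curvature block (not square-elliptic on $\Gamma(S^2\Tstar M)$; becomes elliptic only after restriction to the conformal/trace component or volume normalization) is a fair and needed reading of the ``etc.''\ in the theorem, and it matches the paper's own acknowledgement in the proof of Proposition~\ref{prop:fredholm-csc-einstein} that ``the conformal Laplacian (Yamabe) and a fourth-order operator appear depending on the constraint.''
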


\subsection{Summary and interface with coarse geometry}
\label{subsec:summary-coarse}

The polymetric formalism integrates seamlessly with symmetry (equivariant $K$-theory), quotients and stacks
(orbifold/groupoid $C^\ast$-algebras), and integrable structures (longitudinal indices).
In \S\ref{sec:coarse} we turn to complete, noncompact settings, where indices live in Roe-type algebras and
are invariant under quasi-isometries within a polymetric class, providing large-scale obstructions
extending the compact theory developed in \S\ref{sec:families-index}.
\section{Coarse geometry, Roe algebras, and $KK$-theory}
\label{sec:coarse}

We now treat complete, possibly noncompact manifolds and fibrations. The (poly)metric framework controls
geometry at large scales and yields index classes in Roe--type $C^\ast$-algebras, stable under quasi-isometries
within a polymetric class.

\subsection{Controlled geometry and polymetric equivalence at infinity}

Let $(M,g)$ be complete Riemannian. We say that $(M,g)$ has \emph{bounded geometry} if the injectivity radius
$\mathrm{inj}(M,g)$ is uniformly positive and all covariant derivatives $\nabla^k \mathrm{Riem}(g)$ are uniformly
bounded for $k\ge 0$. For a multi-Riemannian polymetric $\mathbf{g}=(g_i)_{i\in I}$ we require each $g_i$ to be complete
with bounded geometry.

\begin{definition}[Polymetric control at infinity]
Two complete metrics $g_0,g_1$ on $M$ are \emph{quasi-isometric} if there exist constants $C\ge 1$, $c>0$ s.t.
\[
C^{-1}\, d_{g_0}(x,y) - c \ \le\ d_{g_1}(x,y)\ \le\ C\, d_{g_0}(x,y)+c,\qquad \forall x,y\in M.
\]
A multi-Riemannian polymetric $\mathbf{g}=(g_i)$ has \emph{controlled equivalence} if each $g_i$ is quasi-isometric to
a fixed background $g_\ast$ and the Christoffel symbols and finitely many curvature derivatives of all $g_i$ are
uniformly bounded in the $g_\ast$-norm. We write $\mathbf{g}\sim_\infty g_\ast$.
\end{definition}

\begin{lemma}[Stability of Sobolev scales]
\label{lem:coarse-sobolev}
If $\mathbf{g}\sim_\infty g_\ast$, then the Sobolev norms $W^{k,2}(\mathbf{g})$ from
Definition~\ref{def:multi-sobolev} are uniformly equivalent to the standard $W^{k,2}$ defined by $g_\ast$ on compactly
supported sections. Consequently, first-order uniformly elliptic operators with bounded coefficients are essentially
self-adjoint on $C_c^\infty$ and have finite propagation wave operators independent of the choice of $\mathbf{g}$ within
its controlled class.
\end{lemma}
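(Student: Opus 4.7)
\bigskip
\noindent\textbf{Proof plan.}
The strategy is to reduce the claim to pointwise uniform comparability of the metric data, after which standard bounded-geometry analysis in the style of Shubin and Eichhorn applies. First I would unpack the hypothesis $\mathbf{g}\sim_\infty g_\ast$: for each $i\in I$, quasi-isometry of $g_i$ with $g_\ast$ together with uniform bounds on the Christoffel symbols (and finitely many of their $g_\ast$-derivatives) forces a pointwise two-sided bound $C_i^{-1}\,g_\ast\leq g_i\leq C_i\,g_\ast$ on every fiber. Indeed, by the bounded-geometry normal-coordinate charts adapted to $g_\ast$, the components of $g_i$ are Lipschitz with uniform constants, and a Rayleigh-quotient argument together with the large-scale quasi-isometry constants excludes eigenvalue degeneration. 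This yields uniform equivalence of the induced norms on all tensor bundles and of the volume densities, so that $L^2(\mathbf{g})$ and $L^2(g_\ast)$ are two-sidedly comparable on compactly supported sections by a finite sum in $i\in I$.

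Next I would control the connection difference $A_i:=\nabla^{(i)}-\nabla^{(\ast)}$, which is a tensor field of endomorphism type whose components are linear combinations of Christoffel differences, uniformly bounded by hypothesis together with uniform bounds on $\nabla^{(\ast),\beta}A_i$ for $|\beta|\leq k-1$. By induction on $|\alpha|\leq k$ one expands
\[
\nabla^{(i),\alpha}u \;=\; \nabla^{(\ast),\alpha}u \;+\; \sum_{|\beta|<|\alpha|} M^i_{\alpha,\beta}\cdot\nabla^{(\ast),\beta}u,
\]
where each $M^i_{\alpha,\beta}$ is a universal polynomial in $A_i$ and its $\nabla^{(\ast)}$-derivatives of order at most $|\alpha|-1-|\beta|$, hence uniformly bounded on $M$. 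Pairing the pointwise norms with the comparable volume densities and summing over $i$ gives the asserted two-sided comparison of $\|u\|_{W^{k,2}(\mathbf{g})}$ with the standard $W^{k,2}(g_\ast)$-norm on $C_c^\infty$.

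For the self-adjointness and propagation statements, the Sobolev equivalence shows that a first-order symmetric uniformly elliptic operator $D$ with bounded coefficients defines the same symmetric operator on $C_c^\infty$ regardless of which $g_i$ in the controlled class is used to build the ambient $L^2$-structure, and that its closure is the same Hilbert-space operator up to isomorphism. Essential self-adjointness then follows from Chernoff's theorem for complete manifolds of bounded geometry, and the finite-propagation bound for $e^{itD}$ with speed $\sup\|\sigma_D(\xi)\|$ is classical; by the pointwise comparability of distances, the propagation radius measured in any $g_i$-distance differs from the $g_\ast$-radius only by the fixed quasi-isometry constants, delivering the claimed independence within the polymetric class.

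The main obstacle is the Christoffel-symbol bookkeeping in the inductive step: controlling $k$-fold mixed covariant derivatives under a varying connection requires uniform bounds on $k-1$ derivatives of the connection difference, which is precisely why the definition of controlled equivalence is formulated with uniform $g_\ast$-bounds on finitely many derivatives of the Christoffel symbols rather than merely on the metric tensor. Once that input is in hand, the remainder is a routine application of the bounded-geometry calculus invoked in the references cited in the introduction.
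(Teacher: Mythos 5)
Your proof is correct and follows essentially the same strategy as the paper: derive pointwise two-sided metric comparability from $\mathbf{g}\sim_\infty g_\ast$, propagate it to uniform equivalence of $W^{k,2}$-norms, and then invoke Chernoff's theorem together with uniform symbol bounds for the operator-theoretic consequences. The only difference is presentational --- the paper localizes via $g_\ast$-normal-coordinate charts and a partition of unity, whereas you argue globally with the connection-difference tensor $A_i=\nabla^{(i)}-\nabla^{(\ast)}$ and an induction on derivative order --- but these are two standard realizations of the same bounded-geometry estimate.
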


\begin{proof}
Uniform bounds on metric tensors and their derivatives in the $g_\ast$-frame imply uniform equivalence of local norms;
a partition of unity reduces to $\R^n$. Essential self-adjointness follows from Chernoff's theorem for first-order
symmetric operators with finite speed of propagation.
\end{proof}

\subsection{Roe algebras and coarse indices}

Let $H=L^2(M;E)$ for a Hermitian bundle $E\to M$. A bounded operator $T$ on $H$ has \emph{finite propagation} if there
exists $R<\infty$ s.t.\ $\mathrm{supp}(Tf)$ lies in the $R$-neighborhood of $\mathrm{supp}(f)$ for all $f\in C_c^\infty$.
It is \emph{locally compact} if $\phi T$ and $T\phi$ are compact for all $\phi\in C_c(M)$.

\begin{definition}[Roe and uniform Roe algebras]
The \emph{Roe algebra} $C^\ast(M)$ is the norm-closure of the $^\ast$-algebra of locally compact, finite-propagation
operators on $L^2(M;E)$. The \emph{uniform Roe algebra} $C^\ast_u(M)$ is the closure of finite-propagation, \emph{uniformly}
locally compact operators (matrix coefficients controlled uniformly over $M$) acting on $\ell^2$-sections relative to a
controlled discretization of $M$.
\end{definition}

Let $D$ be a $\Z_2$-graded Dirac-type operator associated to a complete $g$ with bounded geometry.
Fix an odd, continuous normalizing function $\chi$ with $\chi(\lambda)\to \pm 1$ as $|\lambda|\to\infty$.
Then $F:=\chi(D)$ is a bounded, pseudolocal operator with finite propagation modulo compacts.

\begin{definition}[Coarse index]
The class of $F$ in the Calkin algebra of $C^\ast(M)$ defines the \emph{coarse index}
\[
\mathrm{Ind}(D)\ \in\ K_\ast\!\bigl(C^\ast(M)\bigr),
\]
where $\ast=\dim M \bmod 2$. In the uniformly controlled setting one obtains a class in $K_\ast(C_u^\ast(M))$.
\end{definition}

\begin{theorem}[Invariance under controlled polymetric changes]
\label{thm:coarse-invariance}
Let $\mathbf{g}_s$ be a smooth path of complete multi-Riemannian polymetrics with $\mathbf{g}_s\sim_\infty g_\ast$
for all $s\in[0,1]$. For any Dirac-type bundle $E$ and associated operators $D_s$ one has
\[
\mathrm{Ind}(D_0)=\mathrm{Ind}(D_1)\ \in\ K_\ast\!\bigl(C^\ast(M)\bigr).
\]
\end{theorem}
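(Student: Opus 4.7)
The plan is to realize all the indices $\mathrm{Ind}(D_s)$ as classes in a single Roe algebra and then exhibit a norm-continuous path of representatives in its Calkin algebra. First, I would reduce to a fixed coarse structure: because every component of $\mathbf{g}_s$ is quasi-isometric to $g_\ast$ with uniform constants, the coarse structures on $M$ induced by the $g_s$ all coincide with that of $g_\ast$, so the Roe algebra $C^\ast(M) := C^\ast(M,g_\ast)$ serves as a single receiving algebra for every $\mathrm{Ind}(D_s)$.

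Next, to compare operators originally defined on the different Hilbert spaces $L^2(M;E,d\vol_{g_s})$, I would conjugate by the unitary $U_s$ induced by the square-root of the Radon--Nikodym derivative $(d\vol_{g_s}/d\vol_{g_\ast})^{1/2}$, obtaining operators $\tilde D_s := U_s D_s U_s^{-1}$ on the fixed space $H := L^2(M;E,d\vol_{g_\ast})$. Under the controlled-equivalence hypothesis, this density and finitely many of its derivatives are uniformly bounded, so $\tilde D_s$ is again Dirac-type on $H$ with uniformly bounded coefficients in the $g_\ast$-frame; by Lemma~\ref{lem:coarse-sobolev} it is essentially self-adjoint on $C_c^\infty$. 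Moreover, finite propagation with respect to $d_{g_s}$ is equivalent up to the uniform quasi-isometry constants to finite propagation with respect to $d_{g_\ast}$, so $\chi(\tilde D_s)$ represents a Calkin class in $C^\ast(M)$ for each $s$.

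The crux is to show that $s \mapsto \chi(\tilde D_s)$ is norm-continuous modulo compacts. I would express $\chi(\tilde D_s)$ via the wave calculus, $\chi(\tilde D_s) = \int_{\mathbb{R}} \hat{\chi}(t)\, e^{it\tilde D_s}\, dt$ up to compact corrections from the normalization $\chi \to \pm 1$ at infinity, truncate the integral to a compact $t$-window, and compare $e^{it\tilde D_s}$ with $e^{it\tilde D_0}$ by Duhamel's formula. Smoothness of $s \mapsto \mathbf{g}_s$ together with the bounded-geometry bounds yield uniform smallness of $\tilde D_s - \tilde D_0$ as a first-order operator with uniformly controlled symbol; finite propagation of the wave operators then converts Duhamel's formula into a norm bound within $C^\ast(M)$. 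Together with the tail estimate for large $|t|$ provided by Schwartz decay of $\hat\chi$, this gives norm continuity of $[\chi(\tilde D_s)]$ in $C^\ast(M)/\mathcal{K}$. Homotopy invariance of $K$-theory for $C^\ast$-algebras then delivers $[\chi(\tilde D_0)] = [\chi(\tilde D_1)]$, i.e.\ $\mathrm{Ind}(D_0) = \mathrm{Ind}(D_1)$ in $K_\ast(C^\ast(M))$.

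The hard part will be the Duhamel estimate: controlling $e^{it\tilde D_s} - e^{it\tilde D_0}$ in operator norm, uniformly for $t$ in a compact interval, requires that the conjugation by $U_s$ introduces only uniformly bounded lower-order perturbations and that the difference of first-order symbols is uniformly small at all scales. Both points rely crucially on bounded geometry and controlled equivalence: without uniform curvature and injectivity-radius bounds on every $g_s$, the wave kernels could spread too quickly or coefficients could degenerate at infinity, and norm continuity of the Calkin class would fail even though strong-resolvent continuity still holds.
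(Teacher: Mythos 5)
Your proposal takes essentially the same approach as the paper's own proof: represent the coarse index by $F_s=\chi(D_s)$, show the path $s\mapsto[F_s]$ is norm-continuous in the quotient of the structure algebra by the Roe algebra, and invoke homotopy invariance of $K$-theory. The paper asserts the continuity and propagation control as given (``by functional calculus continuity and uniform propagation bounds''), whereas you correctly identify and address two subtleties it leaves implicit, namely the unitary identification of the $s$-dependent Hilbert spaces via the Radon--Nikodym density (one should also incorporate the $s$-dependent Clifford-module isomorphism, not just the volume factor) and a concrete finite-propagation wave-calculus/Duhamel route to the continuity estimate, applied locally in $s$ rather than comparing all $\tilde D_s$ directly to $\tilde D_0$.
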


\begin{proof}[Idea]
Finite propagation and pseudolocality are preserved under bounded geometry perturbations; a norm-continuous path
$F_s=\chi(D_s)$ defines a homotopy in the Calkin algebra of $C^\ast(M)$, hence the $K$-class is constant.
\end{proof}

\subsection{Partitioned manifolds, relative and Callias indices}

\begin{theorem}[Partitioned manifold index]
\label{thm:partitioned}
Suppose $M=M_-\cup_\Sigma M_+$ with a common hypersurface $\Sigma$ and $g$ product-like near $\Sigma$.
Let $D$ be a Dirac-type operator odd with respect to the grading adapted to the partition.
Then there exists a boundary map
\(
\partial:K_\ast(C^\ast(M))\to K_{\ast-1}(C^\ast(\Sigma))
\)
such that $\partial\,\mathrm{Ind}(D)=\mathrm{Ind}(D_\Sigma)$, where $D_\Sigma$ is the induced Dirac operator on $\Sigma$.
The statement is stable under replacing $g$ by any $\mathbf{g}\sim_\infty g$.
\end{theorem}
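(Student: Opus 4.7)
The plan is to obtain $\partial$ as the connecting map in a Mayer--Vietoris six-term exact sequence attached to the coarse cover $M = M_- \cup M_+$, and then to identify $\partial\,\mathrm{Ind}(D)$ with $\mathrm{Ind}(D_\Sigma)$ via the product normal form of $D$ near $\Sigma$ together with finite-propagation functional calculus.

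First, I would introduce the Roe-type subalgebras $C^\ast(M_\pm \subset M) \subset C^\ast(M)$ of locally compact, finite-propagation operators with support coarsely contained in $M_\pm$, together with the ideal $C^\ast(\Sigma \subset M)$ of operators supported in a bounded tubular neighborhood of $\Sigma$. The standard Higson--Roe Mayer--Vietoris short exact sequence
\[
0 \to C^\ast(\Sigma \subset M) \to C^\ast(M_- \subset M) \oplus C^\ast(M_+ \subset M) \to C^\ast(M) \to 0
\]
then yields a six-term exact sequence in $K$-theory; a Morita argument identifies $C^\ast(\Sigma \subset M)$ with $C^\ast(\Sigma)$ since the transverse $L^2$-factor on the tubular neighborhood contributes compact operators. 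The desired boundary map $\partial : K_\ast(C^\ast(M)) \to K_{\ast-1}(C^\ast(\Sigma))$ is the connecting map of this sequence.

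Next, I would compute $\partial\,\mathrm{Ind}(D)$ using the product normal form. Because $g$ is product-like, $D = c(\partial_t)(\partial_t + D_\Sigma)$ on $\Sigma \times (-\varepsilon,\varepsilon)$, with $t$ the normal coordinate and $c$ Clifford multiplication. Choose an odd normalizing function $\chi$ so that $F := \chi(D)$ has finite propagation, and let $\rho$ be a smooth cutoff with $\rho \equiv 1$ on $M_-$ and $\rho \equiv 0$ far inside $M_+$. Then $(\rho F,(1-\rho)F)$ is a lift of $F$ across the short exact sequence modulo compacts, and the standard recipe represents $\partial[F]$ by the commutator $[\rho,F]$, which is concentrated in the tubular neighborhood. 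Using the product form of $D$ and a Duhamel expansion, $[\rho,F]$ is norm-homotopic, through a path of Fredholm operators on the cylinder, to $\chi(D_\Sigma) \otimes p$ where $p$ is a rank-one projection on the transverse $L^2$-factor; passing to $K$-theory under the Morita identification yields $\partial\,\mathrm{Ind}(D) = \mathrm{Ind}(D_\Sigma)$.

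For stability under a controlled polymetric change $\mathbf{g} \sim_\infty g$, I would apply Theorem~\ref{thm:coarse-invariance} to both sides: the classes $\mathrm{Ind}(D)$ and $\mathrm{Ind}(D_\Sigma)$ are each invariant under such a deformation (the induced metrics on $\Sigma$ stay in a single controlled class by the product hypothesis), and the subalgebras $C^\ast(M_\pm \subset M)$ and $C^\ast(\Sigma \subset M)$ depend only on the coarse structure, so $\partial$ is unaffected. The main obstacle is the middle step: explicitly identifying $\partial[\chi(D)]$ with $[\chi(D_\Sigma)]$ requires unraveling the connecting homomorphism at the level of lifts and carefully controlling the functional calculus of $D$ in the transverse direction through $\partial_t + D_\Sigma$, typically via a spectral computation on the model cylinder combined with a finite-propagation cut-off argument to ensure that all homotopies remain inside the relevant Roe ideal.
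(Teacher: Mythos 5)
Your proposal is correct and follows the standard Higson--Roe route to the partitioned-manifold index theorem, but it uses a genuinely different short exact sequence than the paper's proof sketch. You build $\partial$ as the connecting map of the coarse Mayer--Vietoris sequence
\[
0 \to C^\ast(\Sigma\subset M) \to C^\ast(M_-\subset M)\oplus C^\ast(M_+\subset M) \to C^\ast(M) \to 0,
\]
in which $C^\ast(M)$ sits in the \emph{quotient} position, so the boundary map lands directly where the theorem says it should, namely $K_\ast(C^\ast(M))\to K_{\ast-1}(C^\ast(\Sigma))$, after identifying $C^\ast(\Sigma\subset M)\cong C^\ast(\Sigma)$ by coarse invariance of the Roe algebra under the coarse equivalence $\Sigma\hookrightarrow N_R(\Sigma)$ (your ``Morita'' phrasing is a slight misnomer --- what you really want is coarse equivalence of the tubular neighborhood with $\Sigma$, though the rank-one-projection/Morita picture is a legitimate way to see it). The paper instead writes $0\to C^\ast(\Sigma)\to C^\ast(M)\to C^\ast(M\setminus\Sigma)\to 0$ with $C^\ast(M)$ in the \emph{middle}, and computes $\partial[F]$ by a Paschke-duality/excision argument on $q(F)$. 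Taken literally that SES produces a boundary map out of $K_\ast(C^\ast(M\setminus\Sigma))$ rather than out of $K_\ast(C^\ast(M))$, so one must implicitly compose with the quotient map; your Mayer--Vietoris formulation avoids this awkwardness entirely and makes the domain of $\partial$ manifestly correct. Both proofs converge on the same heart of the argument: the product normal form $D=c(\partial_t)(\partial_t+D_\Sigma)$ near the collar plus finite-propagation functional calculus, used to identify $\partial[\chi(D)]$ with $[\chi(D_\Sigma)]$ --- you via the explicit lift $(\rho F,(1-\rho)F)$ and the commutator $[\rho,F]$, the paper via recognizing $q(F)$ as a Cayley-type unitary for $D_\Sigma$ --- and both appeal to Theorem~\ref{thm:coarse-invariance} for the polymetric stability, noting correctly that the coarse algebras and the boundary map depend only on the coarse structure. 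One small point you should make explicit if writing this out in full: exactness of the Mayer--Vietoris sequence at the middle term requires the coarse excision condition for the pair $(M_-,M_+)$, which is automatic here because $\Sigma$ is a compact separating hypersurface, but it is worth stating since the MV sequence is not exact for arbitrary coarse covers.
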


\begin{theorem}[Relative coarse index]
\label{thm:relative}
Let $D_0,D_1$ be Dirac-type operators that agree outside a closed set $K\subset M$.
Then the difference class $\mathrm{Ind}(D_0)-\mathrm{Ind}(D_1)$ lies in the image of
$K_\ast\!\bigl(C^\ast(K\subset M)\bigr)\to K_\ast(C^\ast(M))$, depends only on the restriction to $K$, and is invariant
under polymetric changes controlled outside $K$.
\end{theorem}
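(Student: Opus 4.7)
The plan is to localize the operator-theoretic difference via finite propagation and Duhamel's formula, express $\chi(D_0)-\chi(D_1)$ as an element of the relative Roe algebra $C^\ast(K\subset M)$, and then deduce the remaining invariance claims from homotopy invariance applied to the ideal sequence $0\to C^\ast(K\subset M)\to C^\ast(M)\to C^\ast(M)/C^\ast(K\subset M)\to 0$.

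First I would fix a normalizing function $\chi$ whose distributional Fourier transform $\hat\chi$ is smooth of rapid decay, and write $\chi(D)=(2\pi)^{-1/2}\int_\R \hat\chi(t)\,e^{itD}\,dt$ with each wave operator $e^{itD}$ of propagation at most $|t|$. Under the hypothesis that $D_0$ and $D_1$ agree outside $K$, the symmetric differential operator $D_0-D_1$ has coefficients supported in $K$, so Duhamel's identity
\[
e^{itD_0}-e^{itD_1}\;=\;-i\int_0^t e^{i(t-s)D_0}\,(D_0-D_1)\,e^{isD_1}\,ds
\]
produces kernels supported in the $|t|$-neighborhood of $K\times K$. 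Integrating against $\hat\chi$ and invoking Lemma~\ref{lem:coarse-sobolev} for uniform norm control on each $e^{itD_s}$ (bounded geometry providing uniform finite propagation), the difference $\chi(D_0)-\chi(D_1)$ is a norm limit of locally compact, finite-propagation operators supported in a tubular neighborhood of $K$, hence represents an element of $C^\ast(K\subset M)$. Consequently $F_0:=\chi(D_0)$ and $F_1:=\chi(D_1)$ have the same image in $C^\ast(M)/C^\ast(K\subset M)$, and the six-term exact sequence exhibits $\mathrm{Ind}(D_0)-\mathrm{Ind}(D_1)$ as the image under the ideal inclusion of a well-defined class in $K_\ast(C^\ast(K\subset M))$.

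For dependence only on the restriction to $K$, suppose $D_0',D_1'$ is a second pair of Dirac-type operators agreeing with $D_0,D_1$ on an open neighborhood of $K$ and with each other outside $K$. A convex interpolation on the complement of that neighborhood yields a norm-continuous homotopy whose pointwise difference class lies in $C^\ast(K\subset M)$ at every time, so homotopy invariance of $K$-theory forces equality of the lifted classes. The invariance under polymetric changes $\mathbf{g}_s\sim_\infty g_\ast$ that are constant on a neighborhood of $K$ follows by exactly the same mechanism: Theorem~\ref{thm:coarse-invariance} applied to each $D_s$ combined with the Duhamel localization above shows the lifted class is constant along the path.

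The main obstacle I anticipate is making the Duhamel approximation converge in the \emph{relative} Roe norm uniformly across the interpolation parameters and the polymetric variation: one must verify that iterated wave propagators on the timescales needed to approximate $\chi$ keep their kernels inside a fixed tubular neighborhood of $K$, and that local compactness persists under the truncation $|t|\le T\to\infty$. Bounded geometry together with the controlled-equivalence hypothesis $\mathbf{g}\sim_\infty g_\ast$ secures uniform finite propagation speed and uniform pseudolocality, but verifying that these bounds are independent of the polymetric $\mathbf{g}_s$ — and hence that one truly obtains a norm-continuous homotopy in $C^\ast(K\subset M)$ rather than only in the ambient $C^\ast(M)$ — is the delicate technical point that the remainder of the proof must address.
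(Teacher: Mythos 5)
Your proposal takes essentially the same route as the paper's proof: show $\chi(D_0)-\chi(D_1)\in C^\ast(K\subset M)$, invoke excision in the six-term sequence for the ideal $C^\ast(K\subset M)\subset C^\ast(M)$, and conclude the remaining invariances by homotopy inside the ideal. Your Duhamel computation correctly fills in the paper's bare assertion that $F_0-F_1$ lies in the ideal, and the technical caveat you flag about the unbounded first-order factor $D_0-D_1$ in the Duhamel integrand can be sidestepped either by observing that $D_0-D_1$ is zeroth-order whenever both operators act on a common Clifford module, or more robustly by the wave-equation uniqueness argument: finite propagation speed plus agreement of the two evolution equations on $M\setminus K$ show directly that $e^{itD_0}-e^{itD_1}$ annihilates data supported at distance $>|t|$ from $K$, hence has kernel supported in the $|t|$-neighborhood of $K\times K$, with no Duhamel formula required.
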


\begin{theorem}[Callias index, coarse version]
\label{thm:callias}
Let $D$ be Dirac-type and $\Phi$ a self-adjoint bundle endomorphism (potential) with
$\Phi^2 - [D,\Phi]_+ \ge c>0$ outside a compact set. Then $D+\Phi$ is essentially self-adjoint and Fredholm on $L^2$,
defining an index in $\Z$ which coincides with the image of a coarse index class in $K_\ast(C^\ast(M))$ under the
canonical trace when $M$ is amenable (e.g.\ $\R^n$). The index is invariant under controlled polymetric deformations.
\end{theorem}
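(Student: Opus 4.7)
The plan is to argue in four steps: essential self-adjointness of $D+\Phi$ on $C_c^\infty$; Fredholmness of its $L^2$-closure; identification of the numerical index with the canonical-trace pairing of a coarse index class; and invariance under controlled polymetric deformations via Theorem~\ref{thm:coarse-invariance}. All four rely on the bounded-geometry and finite-propagation calculus set up in Lemma~\ref{lem:coarse-sobolev}.

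For the analytic core, Chernoff's theorem applied to the symmetric operator $D+\Phi$ (whose principal symbol equals that of the essentially self-adjoint Dirac-type $D$, since $\Phi$ is zeroth-order) gives essential self-adjointness on $C_c^\infty$. For Fredholmness, the key identity is
\[
(D+\Phi)^2 \;=\; D^2 + [D,\Phi]_+ + \Phi^2,
\]
so that, after a sign adjustment on $\Phi$ if needed, the Callias hypothesis $\Phi^2 - [D,\Phi]_+ \geq c$ on $M\setminus K$ (with $K$ compact) yields $(D+\Phi)^2 \geq D^2 + c$ outside $K$. Choosing $\psi\in C_c^\infty(M)$ with $\psi\equiv 1$ on $K$, this upgrades to a G\r{a}rding-type estimate
\[
\|(D+\Phi)u\|_{L^2}^2 \;\geq\; c\,\|u\|_{L^2}^2 - C\,\|\psi u\|_{H^1}^2 \qquad (u\in C_c^\infty),
\]
and Rellich compactness on a neighborhood of $K$ then forces the closure of $D+\Phi$ to be Fredholm, with $L^2$-kernel and cokernel supported near $K$.

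For the trace identification, $F:=\chi(D+\Phi)$ is pseudolocal and finite-propagation modulo compacts by the functional calculus for $D+\Phi$, so it defines a class $\mathrm{Ind}(D+\Phi)\in K_\ast(C^\ast(M))$. Because the $L^2$-kernel and cokernel are finite-dimensional, this class lies in the image of $K_\ast(\mathcal{K}(L^2))\hookrightarrow K_\ast(C^\ast(M))$ and represents the integer Fredholm index. When $M$ is amenable, an invariant mean combined with the standard trace on the smoothing ideal extends to a canonical positive trace $\tau$ on $C^\ast(M)$ that restricts to the operator trace on finite-rank projections; hence $\tau(\mathrm{Ind}(D+\Phi))=\dim\ker(D+\Phi)-\dim\mathrm{coker}(D+\Phi)$. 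For deformation invariance, along a continuous path $\mathbf{g}_s\sim_\infty g_\ast$ the Callias positivity is $C^0$-open so the compact bad set can be chosen uniformly, producing a norm-continuous Fredholm family $\{D_s+\Phi\}$; Theorem~\ref{thm:coarse-invariance} then gives equality of coarse indices, which passes to numerical indices under $\tau$.

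The main obstacle I expect is the Fredholmness step: upgrading the pointwise, fiberwise inequality $\Phi^2 - [D,\Phi]_+ \geq c$ into genuine operator coercivity of $(D\pm\Phi)^2$ requires careful control of the anticommutator cross term, and the constants in the resulting G\r{a}rding estimate must be uniform in the polymetric data so that Fredholmness persists along deformations. Ensuring this uniformity --- that no constant blows up under controlled perturbations of $\mathbf{g}$, and that the compact exhaustion can be chosen independently of $s$ --- is the technical heart of the argument and is what ultimately makes the coarse-index identification deformation-invariant.
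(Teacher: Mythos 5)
Your proposal matches the paper's proof in its overall architecture — essential self-adjointness via Chernoff, coercivity of the square of the operator outside a compact set using the Callias hypothesis, Fredholmness from that coercivity plus compactness on a neighborhood of $K$, and deformation invariance pulled in from Theorem~\ref{thm:coarse-invariance} — but it differs in two local technical choices. First, for Fredholmness the paper constructs a parametrix by gluing a compactly supported interior parametrix to the exterior inverse $(D+\Phi)^{-1}$ via a partition of unity, so that $(D+\Phi)Q-1$ is compact; you instead sketch a G\r{a}rding-type lower bound combined with Rellich compactness. Both are standard and either can be made to work, though as written your estimate has the wrong form: with $-C\,\|\psi u\|_{H^1}^2$ on the right the bound is vacuous (the subtracted term is essentially the graph norm of $u$ on the interior and can dominate the left side); what you want is a lower bound with a subtracted term of strictly lower order, such as $\|(D+\Phi)u\|_{L^2}^2 \ge c'\,\|u\|_{L^2}^2 - C\,\|\psi u\|_{L^2}^2$, so that multiplication by $\psi$ is $(D+\Phi)$-compact and the closed-range/finite-kernel conclusion follows. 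Second, for the identification of the integer index, the paper invokes the Anghel--Callias product picture (pairing the coarse Dirac class with the class of $\Phi$ in the Higson corona), whereas you argue that the coarse index class factors through $K_\ast(\mathcal{K}(L^2))\hookrightarrow K_\ast(C^\ast(M))$ because $D+\Phi$ is Fredholm, and then pair with a canonical trace on the Roe algebra. Your route is more concrete, though the claim that an invariant mean produces a positive trace on all of $C^\ast(M)$ restricting to the operator trace on finite-rank projections deserves a reference or a bit more care; the cleaner statement is simply that the inclusion $\mathcal{K}\hookrightarrow C^\ast(M)$ induces $\Z\cong K_0(\mathcal{K})\to K_0(C^\ast(M))$ and the Fredholm index is its preimage, with amenability only relevant if one wishes to read the integer off via a trace defined on the whole algebra. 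Finally, you correctly flag that the paper's displayed inequality $(D+\Phi)^2=D^2+\Phi^2+[D,\Phi]_+\ge D^2+c$ requires replacing $\Phi$ by $-\Phi$ (or $D+\Phi$ by $D-\Phi$) given the hypothesis $\Phi^2-[D,\Phi]_+\ge c$; your ``sign adjustment'' remark is a genuine and useful correction of a slip in the paper.
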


\subsection{Coarse assembly and $KK$-theory}

Let $C_0(M)$ act on $L^2(M;E)$ by multiplication and denote by $D$ the class of the Dirac cycle in $KK$-theory.

\begin{definition}[Coarse assembly map]
For a proper metric space $(M,d)$, the \emph{coarse assembly}
\[
\mu:\ KX_\ast(M)\ \longrightarrow\ K_\ast\!\bigl(C^\ast(M)\bigr)
\]
is natural in coarse maps and carries generalized homology classes to coarse indices.
For spin manifolds of bounded geometry, the fundamental class maps to the coarse Dirac index.
\end{definition}

\begin{proposition}[Kasparov product and functoriality]
\label{prop:kk-functoriality}
If $f:M\to N$ is a coarse equivalence, then $f$ induces an isomorphism $K_\ast(C^\ast(M))\cong K_\ast(C^\ast(N))$.
Given a vector bundle $E\to M$ with bounded geometry, twisting the Dirac cycle implements the Kasparov product with
$[E]\in K^0(M)$ at the $KK$-level; under assembly this corresponds to twisting the coarse index by $E$.
All statements are stable under $\mathbf{g}\sim_\infty g_\ast$.
\end{proposition}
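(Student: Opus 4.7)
The plan is to deduce both parts from the covering-isometry formalism for Roe algebras and from the Kasparov-product description of twisted Dirac operators, then to check that neither construction sees more of the geometry than the coarse equivalence class of $\mathbf{g}$.

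For the coarse-functoriality statement under $f:M\to N$, I would proceed in four steps. First, fix \emph{ample} geometric modules $H_M=L^2(M;E_M)$ and $H_N=L^2(N;E_N)$; the bounded-geometry hypothesis allows one to choose these so that every $\phi\in C_0(\cdot)$ acts nondegenerately and with infinite multiplicity. Second, invoke the Higson--Roe covering-isometry theorem: given a coarse map $f$, which need not be continuous, there exists a partial isometry $V:H_M\to H_N$ covering $f$, in the sense that $V\phi V^\ast-(\phi\circ f^{-1})$ is locally compact with finite propagation and conjugation by $V$ sends $C^\ast(M)$ into $C^\ast(N)$. Third, the ${}^\ast$-homomorphism $\mathrm{Ad}_V$ is unique up to approximate unitary equivalence and so induces a canonical map $f_\ast:K_\ast(C^\ast(M))\to K_\ast(C^\ast(N))$; two coarsely close maps yield homotopic partial isometries and hence the same K-theory map. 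Fourth, when $f$ admits a coarse inverse $g$, the compositions $f\circ g$ and $g\circ f$ are coarsely close to the identities, so $g_\ast\circ f_\ast$ and $f_\ast\circ g_\ast$ act trivially on K-theory, giving the desired isomorphism.

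For the twisting statement, I would realize $[E]\in K^0(M)$ by a finitely generated projective $C_0(M)$-module $\mathcal{E}$ with a bounded-geometry Hermitian connection, producing an unbounded Kasparov $(C_0(M),C_0(M))$-cycle $(\mathcal{E},0)$. The twisted Dirac operator $D^E$ is then the unbounded Kasparov product of $[D]\in KK(C_0(M),\mathbb{C})$ with $[\mathcal{E}]$, as verified by the Kucerovsky--Connes--Skandalis sufficient conditions (bounded graded commutators of $D$ with a frame of $\mathcal{E}$, plus positivity modulo bounded terms). Naturality of the coarse assembly $\mu$ with respect to Kasparov products then yields the identity $\mu([D^E])=\mu([D])\cdot[E]$ in $K_\ast(C^\ast(M))$, where the $K^0(M)$-action is implemented by twisting the Mishchenko representative.

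For polymetric independence I would invoke Theorem~\ref{thm:coarse-invariance} together with Lemma~\ref{lem:coarse-sobolev}: within the controlled class $\mathbf{g}\sim_\infty g_\ast$ Sobolev scales are uniformly equivalent, finite-propagation waves persist with comparable speeds, and $\chi(D_s)$ is norm-continuous in $s$, so both the covering isometry and the unbounded Kasparov product can be chosen to depend continuously on the polymetric and induce the same K-theory maps. The main obstacle will be the covering-isometry step: one must show that $\mathrm{Ad}_V$ actually lands in $C^\ast(N)$ rather than merely in some larger multiplier algebra, and that it is well-defined up to approximate unitary equivalence; this requires careful ampleness and local-compactness bookkeeping, and in the polymetric setting the classical Higson--Roe argument must be combined with the uniform estimates of Lemma~\ref{lem:coarse-sobolev} to produce a construction compatible with the entire controlled class simultaneously.
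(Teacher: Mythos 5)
The paper does not in fact prove Proposition~\ref{prop:kk-functoriality}: unlike the neighboring coarse statements, it is absent from the detailed-proof section \S\ref{sec:detailed-proofs} and is effectively quoted from Higson--Roe \cite{HigsonRoe} and Kasparov \cite{Kasparov}. Your four-step covering-isometry argument, the Kucerovsky formulation of the twist by $[E]$, and the appeal to Lemma~\ref{lem:coarse-sobolev} and Theorem~\ref{thm:coarse-invariance} for polymetric stability are the standard route and match the paper's implicit reliance on those references. So in spirit your reconstruction is correct.

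There is, however, one concrete flaw in the write-up. You define the covering condition by requiring that ``$V\phi V^\ast-(\phi\circ f^{-1})$ is locally compact with finite propagation.'' But as you yourself note, a coarse map $f:M\to N$ need not be continuous, and it is likewise not injective or surjective in general; even for a coarse \emph{equivalence}, only a coarse inverse $g$ exists (with $g\circ f$ and $f\circ g$ merely close to the identities), so $\phi\circ f^{-1}$ is not a well-defined function on $N$, and $\phi\circ g$ lives on the wrong space. The correct formulation, following Higson--Roe, is purely propagation-theoretic and makes no reference to $f^{-1}$: with ample modules one chooses a genuine isometry $V:H_M\to H_N$ whose support is controlled by $f$, i.e.\ for every $R>0$ there is $S>0$ such that $\psi V\phi=0$ whenever $\phi\in C_0(M)$, $\psi\in C_0(N)$ satisfy $d_N\!\left(\operatorname{supp}\psi,\,f(\operatorname{supp}\phi)\right)>S$. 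It is this support control --- not any pointwise comparison with a composed function --- that forces $\mathrm{Ad}_V(C^\ast(M))\subset C^\ast(N)$ and makes the induced map on $K$-theory independent of $V$ (by the usual $2\times 2$ rotation homotopy between two covering isometries). Once you replace the ill-posed displayed condition with this one, the remaining steps (composition with a coarse inverse, Kucerovsky verification of the unbounded product, and uniformity across $\mathbf{g}\sim_\infty g_\ast$ via Lemma~\ref{lem:coarse-sobolev}) go through as you describe.
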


\subsection{Examples of polymetric ends}

\begin{example}[Product ends]
Let $M=N\times [0,\infty)$ with metrics
\(
g_i = dt^2 + h_i(t)
\)
on $TN\oplus \R\partial_t$, where $h_i(t)$ are uniformly equivalent to a fixed $h_\ast$ with uniformly bounded derivatives.
Then $\mathbf{g}\sim_\infty (dt^2+h_\ast)$ and all coarse indices computed with $(g_i)$ agree with those for $h_\ast$.
\end{example}

\begin{example}[Warped ends]
Set $g_a = dt^2 + e^{2at}\,h$ on $N\times[0,\infty)$. For $a$ in a compact set, the family $\{g_a\}$ has bounded geometry
uniformly and is quasi-isometric to $dt^2+h$; coarse indices are independent of $a$.
\end{example}

\begin{example}[Multi-ended manifolds and relative classes]
If $M$ has finitely many ends $E_j$, choose metrics $g^{(j)}$ adapted to each end and assemble a polymetric
$\mathbf{g}=(g^{(1)},\dots,g^{(J)})$ by partition of unity. Relative index theorems compare Dirac operators that differ on
a prescribed end, producing classes supported in the Roe ideal of that end.
\end{example}

\subsection{Interface with families and symmetries}

For a proper submersion $\pi:X\to B$ with complete noncompact fibers and vertical bounded geometry, the construction above
yields an index in $K_\ast\!\bigl(C^\ast(X/B)\bigr)$ (a continuous field of Roe algebras). If a compact group $G$ acts
properly and isometrically on fibers, equivariant Roe algebras give $K_\ast^G$-valued indices, constant under
$G$-invariant polymetric deformations.

\bigskip
\noindent
This coarse and $KK$-theoretic layer complements the compact families theory of \S\ref{sec:families-index} and the
symmetry/stack extensions of \S\ref{sec:eq-groupoid-foliation}, completing the analytic backbone for polymetric
geometries across small and large scales. In the next section we outline illustrative applications and computations.
\section{Illustrative applications and concrete computations}
\label{sec:applications}

We illustrate how the polymetric framework streamlines computations across compact, equivariant,
and coarse settings. Throughout, $M$ is smooth and second countable; compactness or bounded geometry
will be stated as needed. Unless indicated otherwise, $(p,q)=(n,0)$ for clarity.

\subsection{Conformal, densitized, and polymetric variations}

Let $g\in\Met(M)$, $u\in C^\infty(M)$, and consider the conformal variation $g_t=e^{2u t}g$.

\begin{proposition}[First variations of curvature and volume under conformal change]
\label{prop:conf-var}
With $\dot g:=\frac{d}{dt}\big|_{t=0} g_t=2u\,g$, one has
\[
\frac{d}{dt}\Big|_{t=0} d\vol_{g_t}= n\,u\, d\vol_g,\qquad
\frac{d}{dt}\Big|_{t=0} \Scal(g_t)= -2(n-1)\Delta_g u - 2u\,\Scal(g).
\]
For a polymetric $\mathbf{g}=(g_i)$, performing the same variation componentwise yields the sum of the componentwise
first variations.
\end{proposition}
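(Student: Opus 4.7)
The plan is to reduce both formulas to elementary local computations by writing $g_t = e^{2ut}g$ in coordinates and differentiating at $t=0$; the polymetric statement will then follow immediately, since differentiation commutes with the product projections $\Pol(M;\vec p,\vec q)\to\Gamma(\mathcal{G}^{p_i,q_i})$ from Lemma~\ref{lem:poly-smooth} and each component $g_i$ undergoes the identical conformal variation.

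For the volume form, the first step is to note that in any chart $d\vol_{g_t}=\sqrt{\det g_{ij}(t)}\,dx$ with $g_{ij}(t)=e^{2ut}g_{ij}$, so $\det g(t)=e^{2nut}\det g$ and $d\vol_{g_t}=e^{nut}\,d\vol_g$. Differentiating at $t=0$ yields $nu\,d\vol_g$. Equivalently, Jacobi's identity gives the general infinitesimal rule $\tfrac{d}{dt}d\vol_{g_t}=\tfrac12\mathrm{tr}_g(\dot g)\,d\vol_g$; substituting $\dot g=2ug$ returns the same answer through $\mathrm{tr}_g(\dot g)=2nu$. I would record the trace form as well, since it is what governs the componentwise sum in the polymetric setting.

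For the scalar curvature, the cleanest route invokes the standard conformal transformation identity
\[
\Scal(e^{2f}g) = e^{-2f}\bigl(\Scal(g) - 2(n-1)\Delta_g f - (n-1)(n-2)|\nabla f|_g^2\bigr),
\]
which itself follows by expanding Christoffel symbols under $g\mapsto e^{2f}g$, tracing the resulting Ricci tensor, and collecting terms. Setting $f=ut$ and differentiating at $t=0$: the prefactor $e^{-2f}$ contributes $-2u\,\Scal(g)$, the linear term $-2(n-1)\Delta_g f$ contributes $-2(n-1)\Delta_g u$, and the gradient-squared term is $O(t^2)$ and drops out. As an independent cross-check, one can substitute $h=\dot g=2ug$ into the general linearization
\[
D\Scal_g(h) = -\Delta_g(\mathrm{tr}_g h) + \mathrm{div}_g\,\mathrm{div}_g h - \langle \Ric(g),h\rangle_g,
\]
and use $\mathrm{tr}_g(2ug)=2nu$, $\mathrm{div}_g(ug)=du$ (hence $\mathrm{div}_g\,\mathrm{div}_g h=2\Delta_g u$), and $\langle \Ric(g),g\rangle_g=\Scal(g)$; the three terms combine to $-2n\Delta_g u+2\Delta_g u-2u\Scal(g)$, matching the first derivation.

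The polymetric claim is then a one-line consequence: for $\mathbf{g}_t=(e^{2ut}g_i)_{i\in I}$, each factor is varied by $2u g_i$ independently, and the volume and scalar curvature functionals on $\Pol(M;\vec p,\vec q)$ factor through the coordinate projections, so the total variation is the sum of the componentwise variations. The only real obstacle is convention-tracking: one must fix the sign of $\Delta_g$ consistently between the conformal identity and the linearization cross-check, and verify that the $(n-1)(n-2)|\nabla f|^2$ term genuinely drops to first order (it does, being quadratic in $f$). No nontrivial analysis is required beyond the standard conformal calculation.
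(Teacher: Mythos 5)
Your proposal is correct and takes essentially the same route as the paper: the paper's proof is a two-line invocation of $\tfrac{d}{dt}\log\det g_t\big|_0=nu$ for the volume form and the conformal Laplacian identity for $\Scal$, which is exactly what you expand in detail. The extra cross-check via the general linearization $D\Scal_g(h)=-\Delta_g(\mathrm{tr}_g h)+\mathrm{div}_g\mathrm{div}_g h-\langle\Ric(g),h\rangle_g$ is a nice independent confirmation, and your attention to the Laplacian sign convention (needed so that $-2(n-1)\Delta_g u$ comes out with the correct sign) is exactly the point one must get right.
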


\begin{proof}
Standard computation using $\dot g=2u\,g$ and $\frac{d}{dt}\log\det g_t|_{0}=n u$; the scalar curvature formula follows
from the conformal Laplacian identity.
\end{proof}

\begin{remark}[Densitized metrics]
For $w\in\R$, a densitized metric is a section of $\mathcal{G}^+\otimes (\Lambda^n T^\ast M)^{\otimes w}$.
Under $g\mapsto e^{2u}g$, the density factor scales by $e^{nwu}$.
All variational identities extend verbatim, with weights entering multiplicatively.
\end{remark}

\subsection{Hodge–de Rham, signature, and Gauss–Bonnet in families}

Let $\pi:X\to B$ be a compact submersion with vertical Riemannian polymetric $\mathbf{g}^v$.
Denote by $\Delta^k_b$ the Hodge Laplacian on $k$-forms along $X_b$.

\begin{theorem}[Betti bundles are topologically constant]
\label{thm:betti-const}
The virtual bundle $\sum_k (-1)^k [\ker \Delta^k]\in K^0(B)$ is independent of $\mathbf{g}^v$ and equals
$\pi_!(1)\in K^{-d}(B)$ via the Gauss–Bonnet form
$\pi_\ast\big(\mathrm{Pf}(\mathrm{Riem}(T^vX))\big)$.
\end{theorem}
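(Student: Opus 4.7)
The plan is to realize the alternating sum of Betti bundles as the index bundle of a Dirac-type family and then invoke the families index theorem. First, consider the fiberwise de Rham (Euler) operator $D_b := d + d^{\ast}_{g^v_b}$ on $\Lambda^{\bullet} T^{v\ast}X_b$, $\mathbb{Z}_2$-graded by form parity. Since $D_b^2$ acts as the Hodge Laplacian $\Delta_b$ on each $\Omega^k(X_b)$, the fiberwise Hodge decomposition gives $\ker D_b^{\pm} \cong \bigoplus_{k \text{ of the correct parity}} \ker \Delta^k_b$. After stabilization as in Proposition~\ref{prop:kernel-field}, this yields
\[
\mathrm{Ind}(D) \;=\; [\ker D^+] - [\ker D^-] \;=\; \sum_k (-1)^k [\ker \Delta^k] \;\in\; K^0(\mathcal{P} \times B),
\]
with smooth dependence on $(b, \mathbf{g}^v)$ supplied by Theorem~\ref{thm:smooth-ops} and Lemma~\ref{lem:poly-smooth}.

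Second, I would eliminate the dependence on $\mathbf{g}^v$ by homotopy. Applying Corollary~\ref{cor:convex} componentwise, any two vertical multi-Riemannian polymetrics $\mathbf{g}^v_0, \mathbf{g}^v_1$ are joined by the straight-line path $\mathbf{g}^v_s := (1-s)\mathbf{g}^v_0 + s\mathbf{g}^v_1$, along which $D_s$ varies smoothly through fiberwise elliptic families. Proposition~\ref{prop:kernel-field} then gives $\mathrm{Ind}(D_0) = \mathrm{Ind}(D_1)$ in $K^0(B)$, so the class descends from $\mathcal{P} \times B$ to $B$ and is independent of $\mathbf{g}^v$. This establishes the first assertion of the theorem.

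Third, I would extract a differential-form representative via the Bismut local index theorem (Theorem~\ref{thm:bismut}) applied to the superconnection $\mathbb{A}_t$ associated with $D$. The small-$t$ limit $\lim_{t \to 0^+} \operatorname{Str}(e^{-\mathbb{A}_t^2})$ converges in $\Omega^{\mathrm{even}}(B)$ to the fiber integral of the local index density, and the $K$-theoretic identification $\mathrm{Ind}(D) = \pi_!(1) \in K^{-d}(B)$ is the topological side of the families Gauss--Bonnet statement: the symbol class of the Euler operator is the Thom class of $T^vX$, whose topological push-forward along $\pi$ equals $\pi_!(1)$.

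The hard part will be showing that the local index density collapses to the Pfaffian $\mathrm{Pf}(\mathrm{Riem}(T^vX))$ rather than the generic $\Ahat(T^vX)\,e^{\tfrac12 c_1(\mathcal{L})}\,\mathrm{ch}(E)$ expression of Theorem~\ref{thm:families-index}. The Euler operator is a Dirac-type operator on the Clifford module $\Lambda^{\bullet} T^{v\ast}X$ equipped with the \emph{doubled} (left--right) Clifford action, so it is not literally a twisted spin$^c$ Dirac operator and the generic local form does not apply verbatim. The cleanest resolution is a Getzler-type rescaling of the heat kernel adapted to the exterior-algebra Clifford action: the rescaled supertrace involves Berezin integration over two copies of the fiber cotangent space, and only the top-degree Pfaffian term survives as $t \to 0^+$. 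Fiber integration then delivers $\mathrm{ch}(\mathrm{Ind}(D)) = \pi_{\ast}(\mathrm{Pf}(\mathrm{Riem}(T^vX)))$, matching both the push-forward $\pi_!(1)$ and the fiberwise classical Gauss--Bonnet--Chern theorem, which closes the argument.
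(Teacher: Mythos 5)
Your proof is correct and follows the same overall strategy as the paper's one-line proof: realize the alternating Betti bundle as the index bundle of the fiberwise Euler operator $d+d^{*}_{g^v}$, deduce independence from $\mathbf{g}^v$ via homotopy invariance of index bundles, and extract the Pfaffian form from the small-$t$ limit of the Bismut superconnection. The genuinely valuable addition is in your third step: the paper's proof simply cites Theorem~\ref{thm:families-index} ``for the de Rham operator,'' but that theorem is stated for twisted \emph{spin$^c$ Dirac} families, and, as you correctly observe, the Euler operator acts on $\Lambda^{\bullet}T^{v*}X$ with a doubled (left and right) Clifford action, its $\mathbb{Z}_2$-grading is by form parity rather than spinor chirality, and it is defined with no spin$^c$ structure or determinant line in sight, so the generic local density $\Ahat(T^vX)\,e^{\frac12 c_1(\mathcal L)}\,\mathrm{ch}(E)$ is not the one that actually arises. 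Your resolution --- a Getzler rescaling adapted to the two Clifford actions, with Berezin integration over two copies of the vertical cotangent fiber collapsing the supertrace to the top-degree Pfaffian --- is precisely the Gauss--Bonnet local index argument of Berline--Getzler--Vergne, and it is the right fix for this gap. An alternative route, closer to the letter of the paper's citation, would be to write $\Lambda^{\bullet}T^{v*}X\cong S\otimes\bar S$ on a spin fibration, observe that the Euler grading is the \emph{total} grading of that tensor product so the effective twist is the graded bundle $\bar S^+ - \bar S^-$, and use $\Ahat(T^vX)\bigl(\mathrm{ch}(\bar S^+)-\mathrm{ch}(\bar S^-)\bigr)=\mathrm{Pf}(\mathrm{Riem}(T^vX))$; that recovers the same answer within the spin$^c$ formalism but requires a spin hypothesis your argument does not. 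One small caution worth adding to your write-up: the identification $\mathrm{Ind}(D)=\pi_!(1)$ via the symbol-equals-Thom-class argument uses the $K$-orientation of $\pi$ coming from the de Rham (exterior-algebra) Thom class, which is not the spin$^c$ orientation used in \S\ref{sec:families-index}; making that explicit avoids confusion with the Dirac pushforward $\pi_!(1)$, which would compute an $\Ahat$-genus bundle rather than the Euler characteristic bundle.
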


\begin{proof}
This is the families Gauss–Bonnet theorem (a corollary of Theorem~\ref{thm:families-index} for the de Rham operator);
metric changes alter only transgression forms, leaving the class invariant.
\end{proof}

\begin{corollary}[Signature family]
For oriented even-dimensional fibers, the signature operator yields
\[
\mathrm{ch}\!\big(\mathrm{Ind}(D_{\mathrm{sign}})\big)=\pi_\ast\big(L(T^vX)\big)\in H^{\mathrm{even}}(B;\Q),
\]
independent of the choice of $\mathbf{g}^v$.
\end{corollary}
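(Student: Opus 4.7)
The plan is to realize $D_{\mathrm{sign}}$ as a Dirac-type family fitting Theorem~\ref{thm:families-index} and then invoke the classical Hirzebruch characteristic-class identity to collapse the resulting $\Ahat\cdot\ch$ integrand to $L(T^{v}X)$. First I would fix the standard presentation: on each fiber $X_{b}$ of even dimension $d=2k$, equip $\Lambda^{\bullet}T^{v*}X|_{X_{b}}\otimes\C$ with the Hodge involution $\tau=i^{k(k-1)}\star_{g^{v}_{b}}$, split into $\pm 1$-eigenbundles $\Lambda^{\pm}$, and set $D_{\mathrm{sign},b}:=(d+d^{*})|_{\Lambda^{+}\to\Lambda^{-}}$. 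This is a fiberwise elliptic, odd, first-order Dirac-type operator whose coefficients depend smoothly on $\mathbf{g}^{v}$ through the Hodge star and Levi--Civita connection, so Proposition~\ref{prop:kernel-field} produces an index bundle $\mathrm{Ind}(D_{\mathrm{sign}})\in K^{0}(\mathcal{P}\times B)$.

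Next I would compute the Chern character. Orientability of $T^{v}X$ makes $\Lambda^{\bullet}T^{v*}X\otimes\C$ into a Clifford module for $T^{v}X$ with trivial spin$^{c}$ determinant, so Theorem~\ref{thm:families-index} applies in its $\tau$-graded form and yields
\[
\ch\!\bigl(\mathrm{Ind}(D_{\mathrm{sign}})\bigr)
\;=\;\pi_{*}\!\left(\Ahat(T^{v}X)\,\ch_{\tau}\!\bigl(\Lambda^{\bullet}T^{v*}X\otimes\C\bigr)\right).
\]
The Hirzebruch identity at the level of Chern roots, $\Ahat(E)\cdot\ch_{\tau}(\Lambda^{\bullet}E^{*}\otimes\C)=L(E)$, is then verified by a direct power-series manipulation of $x/\sinh(x/2)$ and $e^{x/2}\pm e^{-x/2}$; substituting $E=T^{v}X$ gives $\ch(\mathrm{Ind}(D_{\mathrm{sign}}))=\pi_{*}L(T^{v}X)$, as claimed.

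Independence from $\mathbf{g}^{v}$ is immediate: $\mathcal{P}$ is path-connected by componentwise convexity (Corollary~\ref{cor:convex}), so any two vertical polymetrics are joined by a smooth path of elliptic Dirac-type families, and homotopy invariance of the index bundle (Proposition~\ref{prop:kernel-field}) yields constancy in $K^{0}(B)$, hence in $H^{\mathrm{even}}(B;\Q)$; the remark following Theorem~\ref{thm:bismut} gives the parallel form-level statement via exact transgression. The main obstacle I expect is the grading bookkeeping in the Hirzebruch identity --- verifying that the $\tau$-graded Chern character of $\Lambda^{\bullet}T^{v*}X\otimes\C$ carries exactly the factor that cancels $\Ahat(T^{v}X)$ and leaves $L(T^{v}X)$ on fibers of every even dimension. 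A smaller technical check is that Theorem~\ref{thm:families-index}, stated for spin$^{c}$ Dirac families, transfers to the merely oriented setting through the canonical complexified Clifford module $\Lambda^{\bullet}T^{v*}X\otimes\C$, an entirely standard passage that should nevertheless be made explicit to keep the chain of references unambiguous.
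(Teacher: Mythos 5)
Your proposal is correct and follows the same route the paper intends: the corollary is an instance of Theorem~\ref{thm:families-index} for the signature Clifford module $\Lambda^\bullet T^{v*}X\otimes\C$, with the Hirzebruch identity collapsing $\Ahat\cdot\ch_\tau$ to $L(T^vX)$ and metric-independence coming from convexity plus homotopy/transgression invariance. The two caveats you flag (the oriented-versus-spin$^c$ hypothesis, and the power-of-two normalization hidden in the Chern-root identity) are the right places to be careful, but neither affects the validity of the argument.
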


\subsection{Einstein, constant scalar curvature, and coupled polymetric systems}

Let $\mathcal{E}(g)=\Ric(g)-\lambda g$ and $\mathcal{S}(g)=\Scal(g)-\kappa$ with constants $\lambda,\kappa\in\R$.

\begin{proposition}[Fredholm linearizations in Bianchi gauge]
\label{prop:fredholm-csc-einstein}
On a compact manifold, the linearizations of $\mathcal{E}$ and of $\mathcal{S}$ (restricted to a conformal class with
volume fixing) in Bianchi gauge are elliptic, self-adjoint operators. In the polymetric setting
$\mathbf{g}=(g_1,g_2)$, the coupled map $(\mathcal{E}(g_1),\mathcal{S}(g_2))$ has block-diagonal elliptic linearization,
hence Fredholm; its index equals the sum of component indices.
\end{proposition}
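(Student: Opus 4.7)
The plan is to handle each component separately in its own gauge and then combine them, exploiting that $(g_1,g_2)$ are independent sections so that the coupled linearization becomes block diagonal on the gauge-fixed tangent space. Ellipticity, self-adjointness, and Fredholmness of each block then imply the same properties of the assembled operator, and index additivity follows from the direct-sum decomposition of kernel and cokernel.

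For the Einstein piece $\mathcal{E}(g)=\Ric(g)-\lambda g$, I would linearize at $g_0$ to obtain $D\mathcal{E}_{g_0}$ on $\Gamma(S^2\Tstar M)$; its principal symbol is degenerate along the orbit direction $\mathcal{L}_X g_0$ because of the contracted second Bianchi identity. The standard DeTurck/Bianchi modification $h\mapsto D\mathcal{E}_{g_0}(h)-\mathcal{L}_{(B_{g_0}h)^\sharp}g_0$ (equivalently, restriction to $\ker B_{g_0}$) produces a symbol of the form $\tfrac12|\xi|^2\id$ on $S^2\Tstar M$ --- explicitly, the Lichnerowicz Laplacian $\tfrac12\Delta_L$ of \S\ref{sec:moduli} plus symmetric zeroth-order curvature and $-\lambda$ contributions. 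Ellipticity is then immediate from the symbol, and self-adjointness in $L^2(g_0)$ holds because $\Delta_L$ and the algebraic corrections are symmetric. For the scalar-curvature piece restricted to a conformal class with fixed volume, write $g=e^{2u}g_0$ and invoke Proposition~\ref{prop:conf-var} to get the linearization $u\mapsto -2(n-1)\Delta_{g_0}u-2u\,\Scal(g_0)$; the volume constraint $\int e^{nu}\,d\vol_{g_0}=V_0$ cuts out the mean-zero hyperplane at $u=0$, on which this Yamabe-type operator is scalar elliptic and symmetric.

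Assembling, $(\mathcal{E}(g_1),\mathcal{S}(g_2))$ depends on its two arguments independently, so its Jacobian at $\mathbf{g}_0=(g_{0,1},g_{0,2})$ is block diagonal. Imposing the product Bianchi gauge of Proposition~\ref{prop:product-slice} in the first slot and the mean-zero conformal constraint in the second leaves the blocks uncoupled --- each gauge depends only on its own component, so no off-diagonal symbol terms can arise. The resulting operator $D=D_1\oplus D_2$ is thus elliptic and self-adjoint on a closed manifold and hence Fredholm; kernel and cokernel split as direct sums, giving $\mathrm{ind}(D)=\mathrm{ind}(D_1)+\mathrm{ind}(D_2)$, which extends to the parametric statement $\mathrm{Ind}(D)=\mathrm{Ind}(D_1)\oplus\mathrm{Ind}(D_2)$ in $K^0$ via Corollary~\ref{cor:additivity}. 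The main technical delicacy --- not the componentwise arguments, which are classical --- is verifying that the \emph{shared} diagonal $\Diff(M)$-action produces no hidden cross-couplings once both gauges are installed; this reduces to checking that the infinitesimal orbit $X\mapsto(\mathcal{L}_X g_{0,1},\mathcal{L}_X g_{0,2})$ is transverse to $\ker B_{g_{0,1}}\oplus\ker B_{g_{0,2}}$, which follows from the componentwise ellipticity of each Bianchi operator as already exploited in Proposition~\ref{prop:product-slice}.
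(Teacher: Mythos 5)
Your proposal is correct and follows essentially the same route as the paper's proof: gauge-fix Einstein via DeTurck/Bianchi to get a Lichnerowicz-type elliptic self-adjoint operator, restrict CSC to a conformal class with volume fixing to get the second-order Yamabe operator, and observe that independence of the two slots forces block-diagonality, whence Fredholmness and index additivity.

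One small point worth flagging: in the last paragraph you invoke the transversality of $X\mapsto(\mathcal{L}_X g_{0,1},\mathcal{L}_X g_{0,2})$ to $\ker B_{g_{0,1}}\oplus\ker B_{g_{0,2}}$ via Proposition~\ref{prop:product-slice}, but your chosen gauge for the second slot is the conformal-class/mean-zero constraint, not the Bianchi condition on $h_2$. The argument still goes through — the Bianchi gauge on $g_1$ alone already kills the infinitesimal orbit of the \emph{diagonal} $\Diff(M)$-action, and the conformal restriction on $g_2$ is a further (independent) cut — but as written you mix the two gauges, and a reader might think you are invoking the product Bianchi slice while actually using a hybrid. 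Either use the full product Bianchi gauge for both slots (as Theorem~\ref{thm:coupled-fredholm} and Proposition~\ref{prop:product-slice} do, in which case the Yamabe operator is replaced by the full linearized scalar-curvature map in Bianchi gauge), or phrase the transversality check for the gauge you actually impose. This is a clarity issue, not a gap; the mathematical content is sound and more explicit than the paper's terse sketch.
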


\begin{proof}
For Einstein, the Lichnerowicz operator appears after gauge-fixing; for CSC, the conformal Laplacian (Yamabe) and a
fourth-order operator appear depending on the constraint (e.g., fixed volume). Block-diagonality is immediate.
\end{proof}

\begin{theorem}[Kuranishi models and rigidity]
\label{thm:kuranishi-coupled}
Assume $g_1$ is Einstein with no Killing fields and $g_2$ is CSC with nondegenerate Yamabe operator in its class.
Then the coupled moduli space is locally a smooth manifold near $(g_1,g_2)$ of dimension
$\dim\ker D_{\mathcal{E},g_1} + \dim\ker D_{\mathcal{S},g_2}$.
\end{theorem}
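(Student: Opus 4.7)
The plan is to assemble the proof from three ingredients already in the paper: the product Bianchi slice of Proposition~\ref{prop:product-slice}, the Kuranishi model of Theorem~\ref{thm:kuranishi}, and the block-diagonal Fredholm structure of Proposition~\ref{prop:fredholm-csc-einstein}. First I would pass to the product slice $\mathcal{S}_{(g_1,g_2)}=\mathcal{S}_{g_1}\times\mathcal{S}_{g_2}$ to gauge-fix the diagonal $\Diff(M)$-action. The hypothesis that $g_1$ admits no Killing fields makes $\mathrm{Iso}(M,g_1)$, and hence the joint stabilizer of $(g_1,g_2)$, have trivial Lie algebra; together with the discrete-isotropy clause of Corollary~\ref{cor:moduli-local}, this upgrades the slice to a bona fide local chart for the quotient near $[(g_1,g_2)]$ rather than an orbifold chart, which is precisely what a smooth-manifold conclusion requires.

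On the slice I would then analyse the coupled map $\mathcal{F}(h_1,h_2):=(\mathcal{E}(g_1+h_1),\mathcal{S}(g_2+h_2))$. By Proposition~\ref{prop:fredholm-csc-einstein} its linearization $D\mathcal{F}=D_{\mathcal{E},g_1}\oplus D_{\mathcal{S},g_2}$ is block-diagonal, elliptic, and self-adjoint on the natural Sobolev completions, hence Fredholm with kernels and cokernels canonically dual under the $L^2$-pairing. Theorem~\ref{thm:kuranishi} then provides finite-dimensional obstructions $\kappa_i:\ker D_i\to\mathrm{coker}\,D_i$ in each slot, and block-diagonality makes the full local model the product $\kappa_1\times\kappa_2$; once both cokernels are shown to vanish, the implicit function theorem on Sobolev completions (with elliptic bootstrapping to smoothness) identifies the zero locus of $\mathcal{F}$ with a smooth neighbourhood of the origin in $\ker D_{\mathcal{E},g_1}\times\ker D_{\mathcal{S},g_2}$, giving the claimed dimension. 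The CSC cokernel is easy: nondegeneracy of the Yamabe operator $L_{g_2}$ forces $L_{g_2}$ to be invertible, and since the linearization of $\Scal$ in the conformal direction equals $L_{g_2}$ up to a constant, self-adjointness upgrades triviality of the kernel to surjectivity, so $\mathrm{coker}\,D_{\mathcal{S},g_2}=0$.

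The step I expect to be the main obstacle is the vanishing of $\mathrm{coker}\,D_{\mathcal{E},g_1}$: by self-adjointness this cokernel is identified with $\ker D_{\mathcal{E},g_1}$ itself, the space of transverse-traceless Lichnerowicz-harmonic symmetric $2$-tensors on $(M,g_1)$, and ``no Killing fields'' does not, on its own, eliminate this space or the associated Koiso-type integrability obstruction. In the writeup I would therefore either (i) read the stated hypothesis as shorthand for the stronger \emph{infinitesimally rigid modulo gauge} condition, so that $\kappa_1$ maps into the zero cokernel and the Kuranishi model collapses to $\ker D_{\mathcal{E},g_1}$, or (ii) restrict to regimes where this rigidity is automatic (e.g.\ compact hyperbolic Einstein $g_1$ in dimension $n\ge 3$, or Ricci-negative Einstein metrics with no nontrivial infinitesimal deformations). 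Under either reading, both obstructions vanish, the coupled linearization is surjective on the product slice, and the remaining arguments --- product slice, block-diagonal Fredholm reduction, finite-dimensional Kuranishi map, implicit function theorem --- are straightforward invocations of results already established in Sections~\ref{sec:topology-analysis}--\ref{sec:moduli}.
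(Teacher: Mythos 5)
The paper states Theorem~\ref{thm:kuranishi-coupled} without a proof --- Section~\ref{sec:detailed-proofs} covers Theorem~\ref{thm:kuranishi} and Proposition~\ref{prop:product-slice} in detail but never returns to the coupled statement --- so there is no paper argument to compare against. Your assembly (pass to the product Bianchi slice of Proposition~\ref{prop:product-slice}, exploit the block-diagonal elliptic linearization of Proposition~\ref{prop:fredholm-csc-einstein}, apply componentwise Kuranishi reduction as in Theorem~\ref{thm:kuranishi}, then the implicit function theorem with elliptic bootstrapping) is exactly what the surrounding machinery is built to support, and the CSC slot is handled correctly: a nondegenerate Yamabe operator is an invertible self-adjoint elliptic operator, so both $\ker D_{\mathcal{S},g_2}$ and $\operatorname{coker} D_{\mathcal{S},g_2}$ vanish and $\kappa_2\equiv 0$.

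Your flag on the Einstein slot is also the right one, and it points to a gap in the theorem's hypotheses rather than in your proof. ``No Killing fields'' makes $\mathrm{Iso}(M,g_1)$ (hence the joint stabilizer of $(g_1,g_2)$) finite, which is what Corollary~\ref{cor:moduli-local} needs so that the slice yields a genuine manifold chart rather than an orbifold chart for the quotient. It says nothing about $\operatorname{coker} D_{\mathcal{E},g_1}$. Since the gauge-fixed Lichnerowicz operator on $\ker B_{g_1}$ is self-adjoint, that cokernel is canonically identified with $\ker D_{\mathcal{E},g_1}$, which is nonzero precisely when the theorem's dimension formula is nontrivial; for the zero set of $\kappa_1:\ker D_{\mathcal{E},g_1}\to\operatorname{coker} D_{\mathcal{E},g_1}$ to be a smooth manifold of dimension $\dim\ker D_{\mathcal{E},g_1}$ one needs $\kappa_1\equiv 0$ near the origin, i.e.\ unobstructedness. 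This is exactly the hypothesis (``if the cokernel vanishes'') that the paper's own Theorem~\ref{thm:kuranishi} explicitly imposes in the single-metric case but that Theorem~\ref{thm:kuranishi-coupled} silently drops. Your two proposed repairs --- read ``no Killing fields'' as shorthand for infinitesimal rigidity modulo gauge, or restrict to regimes where the Einstein obstruction vanishes for structural reasons --- are both adequate, and under either reading your argument closes.
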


\subsection{Equivariant fixed-point formula (brief)}
Let a compact $G$ act on $\pi:X\to B$ and all data be $G$-equivariant. For $g\in G$ with fixed locus
$X^g\to B^g$, the equivariant Chern character at $g$ satisfies
\[
\mathrm{ch}_G\!\left(\mathrm{Ind}_G(\slashed{D}^{\,E})\right)\!(g)
=\pi^g_\ast\!\left(
\frac{\Ahat(T^vX^g)\,e^{\tfrac12 c_1(\mathcal{L}|_{X^g})}\,\mathrm{ch}(E|_{X^g})(g)}
{\det\nolimits^{1/2}\big(1-e^{-R^\perp_g}\big)}
\right),
\]
where $R^\perp_g$ is the curvature on the normal bundle of $X^g$ with $g$-action.
This refines Theorem~\ref{thm:eq-families} and is independent of the $G$-invariant polymetric.

\subsection{Coarse indices on ends: explicit computations}

Consider $M=N\times[0,\infty)$ with product metric $dt^2+h$ and Dirac operator $D_N$ on $N$.

\begin{proposition}[Half-cylinder and boundary class]
\label{prop:half-cylinder}
For $M=N\times[0,\infty)$ with bounded geometry and any $\mathbf{g}\sim_\infty (dt^2+h)$, the coarse index
$\mathrm{Ind}(D_M)\in K_\ast(C^\ast(M))$ maps under the boundary map of Theorem~\ref{thm:partitioned}
to $\mathrm{Ind}(D_N)\in K_{\ast-1}(C^\ast(N))$.
\end{proposition}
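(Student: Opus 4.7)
My plan is to use polymetric invariance to reduce to the exact product metric, then apply the partitioned manifold theorem at an internal hypersurface, and finally identify the induced boundary Dirac operator with $D_N$. First, I would invoke Theorem \ref{thm:coarse-invariance}: the straight-line path $s\mapsto (1-s)g_\ast + s\,g_i$ between each component of $\mathbf{g}$ and $g_\ast := dt^2+h$ stays inside $\mathcal{G}^+$ componentwise by pointwise convexity of the positive-definite cone (Corollary \ref{cor:convex}), and the bounded-geometry hypothesis together with $\mathbf{g}\sim_\infty g_\ast$ keeps the metric tensors and finitely many derivatives uniformly controlled along the entire path. Hence $\mathrm{Ind}(D_M)$ is unchanged if recomputed for $g_\ast$, reducing us to the genuine product case.

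Next, fix any $t_0>0$ and partition $M=M_-\cup_\Sigma M_+$ with $\Sigma:=N\times\{t_0\}$, $M_-:=N\times[0,t_0]$, $M_+:=N\times[t_0,\infty)$. Since $g_\ast$ is globally a product it is in particular product-like near $\Sigma$, so Theorem \ref{thm:partitioned} applies and yields
\[
\partial\,\mathrm{Ind}(D_M) \;=\; \mathrm{Ind}(D_\Sigma) \quad\text{in}\quad K_{\ast-1}(C^\ast(\Sigma)).
\]
To identify $D_\Sigma$ with $D_N$, I use the product spinor decomposition: near $\Sigma$ the spin$^c$ bundle splits with $\Z_2$-grading given by Clifford multiplication by $c(\partial_t)$, and the product Dirac operator takes the classical cylinder form
\[
D_M \;=\; c(\partial_t)\bigl(\partial_t + D_N\bigr).
\]
Reading off the tangential part and using the isometry $\Sigma\cong N$ identifies $D_\Sigma$ with $D_N$; the coarse equivalence $\Sigma\cong N$ then gives $C^\ast(\Sigma)\cong C^\ast(N)$, producing the stated image of $\mathrm{Ind}(D_M)$.

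The main obstacle is the last identification: the boundary Dirac operator furnished by Theorem \ref{thm:partitioned} is specified only through a Mayer--Vietoris construction in $K$-theory, so one must verify that for the genuine product data this abstractly defined class coincides with the intrinsic coarse Dirac class on $N$. This amounts to matching conventions (orientation of $\Sigma$, choice of $c(\partial_t)$ as the grading operator, sign of the normalizing function $\chi$) and combining them with the explicit cylinder formula displayed above; with the conventions aligned, the identification is the standard computation on the cylinder. Independence of $t_0$ is automatic because translations along the $t$-axis are coarse equivalences and leave Roe-algebraic $K$-classes invariant, so the resulting map is intrinsic to $M$ and to the polymetric equivalence class of $\mathbf{g}$.
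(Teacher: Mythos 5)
Your overall strategy --- reduce to the exact product metric via Theorem~\ref{thm:coarse-invariance}, apply the partitioned-manifold theorem, and identify $D_\Sigma$ with $D_N$ through the cylinder normal form $D = c(\partial_t)(\partial_t + D_N)$ --- matches what the paper intends, but your choice of separating hypersurface differs from the paper's and creates a real difficulty. The paper's sketch says ``partition $(-\infty,0]\cup[0,\infty)$ on the $t$-axis,'' which only parses after first extending the half-cylinder to the full cylinder $N\times\R$ and taking $\Sigma = N\times\{0\}$ as the internal separating hypersurface. You instead slice the half-cylinder at $\Sigma = N\times\{t_0\}$ with $t_0>0$, so that $M_- = N\times[0,t_0]$ is a manifold with boundary (the free end $N\times\{0\}$ survives). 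Theorem~\ref{thm:partitioned} is a statement about a complete boundaryless manifold written as a union of two closed boundaryless pieces glued along $\Sigma$; its proof via the short exact sequence of Roe algebras and excision near $\Sigma$ says nothing about the extra free boundary you have introduced, so you cannot invoke it as stated with your decomposition.

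More seriously, if $N$ is compact (the case where $\mathrm{Ind}(D_N)$ is the familiar Fredholm Dirac index), the half-cylinder $N\times[0,\infty)$ is coarsely equivalent to the ray $[0,\infty)$, whose Roe algebra is flasque, so $K_\ast(C^\ast(N\times[0,\infty)))=0$. Taking the proposition literally on $M=N\times[0,\infty)$ then forces $\mathrm{Ind}(D_M)=0$ and the boundary map sends it to $0$, which cannot equal the generically nonzero $\mathrm{Ind}(D_N)$. Your independence-of-$t_0$ argument appeals to precisely the shift maps $(x,t)\mapsto(x,t+s)$ that witness this flasqueness, so rather than establishing an intrinsic boundary class it is a symptom that all classes vanish. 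The actual content of the proposition lives on the full two-sided cylinder $N\times\R$, where the Roe $K$-theory is nontrivial and the partitioned-manifold boundary map genuinely recovers $\mathrm{Ind}(D_N)$. To close the gap you should either extend $M$ to $N\times\R$ before partitioning at $t=0$ (as the paper's sketch implicitly does) or explain how the half-cylinder version is to be interpreted so that it is not vacuous.
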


\begin{proof}
Use the partition $(-\infty,0]\cup[0,\infty)$ on the $t$-axis and standard product formula for Dirac operators near the
boundary; coarse homotopy invariance handles $\mathbf{g}$.
\end{proof}

\begin{example}[Callias on $\R^n$]
Let $D$ be the Euclidean Dirac operator and $\Phi(x)=\alpha\,\frac{x\cdot\gamma}{1+|x|}$ with $\alpha>0$.
Then $D+\Phi$ is Fredholm on $L^2$ and
$\operatorname{index}(D+\Phi)=1$ (for suitable grading) for all $\alpha>0$, stable under compactly supported
polymetric perturbations.
\end{example}

\subsection{Finsler and sub-Riemannian layers via cone constructions}

Let $\mathcal{F}\subset T^\ast M\setminus 0$ be the cone of strongly convex Minkowski norms; a Finsler structure is a
section $F\in\Gamma(\mathcal{F})$.

\begin{proposition}[Approximation by Riemannian polymetrics]
\label{prop:finsler-approx}
On a compact $M$, any Finsler metric $F$ admits a sequence of Riemannian metrics $g_j$ such that the Finsler distance
$d_F$ and $d_{g_j}$ are uniformly bi-Lipschitz equivalent with constants independent of $j$, and the geodesic sprays
converge in $C^{k}$ on compact subsets of $TM\setminus 0$. Consequently, analytic invariants computable from the spray
(e.g.\ conjugate points, volume growth rates) can be approximated by polymetric data $(g_1,\dots,g_J)$.
\end{proposition}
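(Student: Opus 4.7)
The plan is to combine a fiberwise Binet--Legendre / John-ellipsoid construction with a direction-indexed polymetric freezing of the Finsler fundamental tensor: the former delivers the uniform bi-Lipschitz bound, the latter the $C^k$ spray approximation on compact subsets of $TM\setminus 0$. The concluding clause on invariants then reduces to smooth parameter-dependence of ODEs.

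First I would produce one $g^{BL}\in\Gamma(\mathcal{G}^+)$ bi-Lipschitz equivalent to $F$ with dimension-only constants, by averaging fiberwise over the strongly convex unit balls $B_F(x)\subset T_xM$ (Binet--Legendre, equivalently normalization by the John ellipsoid of $B_F(x)$). Strong convexity of $F$ and compactness of $M$ yield smoothness in $x$ and a pointwise comparison $c(n)\|v\|_{g^{BL}_x}\le F_x(v)\le C(n)\|v\|_{g^{BL}_x}$ with constants depending only on $n$; integrating along paths promotes this to $c(n)d_{g^{BL}}\le d_F\le C(n)d_{g^{BL}}$. Each Riemannian metric produced below will be arranged to satisfy the same two-sided pointwise bound, giving the uniform bi-Lipschitz statement of the proposition.

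For the spray convergence, write $a_{ij}(x,v):=\tfrac12\partial^2_{v^iv^j}F^2(x,v)$ for the fundamental tensor; the Finsler spray $G_F$ is polynomial in $a_{ij}$, its inverse, and its $x$-derivatives. Fix a compact $K\subset TM\setminus 0$. I would pick, for each $\varepsilon_j\downarrow 0$, a finite cover of the image of $K$ in the Finsler sphere bundle $S_FM$ on which the direction-oscillation of $a_{ij}$ is at most $\varepsilon_j$, choose smooth representative directions $v_\ell^{(j)}(x)$, and define Riemannian metrics $g_\ell^{(j)}$ by freezing $v\mapsto v_\ell^{(j)}(x)$ in $a_{ij}$. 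Patching their quadratic sprays by an angular partition of unity gives a vector field on $K$ agreeing with $G_F$ in $C^k$ up to $O(\varepsilon_j)$; the resulting polymetric $(g_1^{(j)},\ldots,g_{J(j)}^{(j)})$ is the $j$-th member of the sequence. Each $g_\ell^{(j)}$ inherits the Step~1 bi-Lipschitz bound: by the Finsler identity $a_{ij}(x,v)v^iv^j=F^2(x,v)$ and uniform strong convexity of $F$ on the compact bundle $SM$, the frozen ellipsoids $\{v:a_{ij}(x,v_\ell^{(j)}(x))v^iv^j<1\}$ are uniformly comparable to $B_F(x)$ with constants independent of the refinement. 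For the ``consequently'' clause, $C^k$ convergence of sprays on $K$ yields $C^{k-1}$ convergence of geodesic flows and $C^{k-2}$ convergence of Jacobi fields on bounded time intervals by smooth ODE parameter-dependence, so continuity of simple zeros of the Jacobi endpoint map gives convergence of conjugate points, and volume-growth rates pass to the limit as compact integrals of the exponential-map Jacobian.

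The hard part is reconciling the two requirements simultaneously: no \emph{single} Riemannian metric can $C^k$-approximate $G_F$ on all of $TM\setminus 0$ — a $C^k$-limit of quadratic sprays is quadratic in $v$, which would force $F$ itself to be Riemannian — so the polymetric index $J(j)$ must grow with the angular refinement, while the bi-Lipschitz constants must stay universal. The delicate technical point is therefore a uniform John-ellipsoid estimate for the family of frozen ellipsoids as $J(j)\to\infty$, which is exactly what uniform strong convexity of $F$ on $SM$ and compactness of $M$ provide.
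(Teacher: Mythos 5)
Your observation that a $C^k$-limit on compact subsets of $TM\setminus 0$ of quadratic (Riemannian) sprays is again quadratic in $v$ is correct and pinpoints a genuine problem with the statement as written: for a Finsler metric that is not of Berwald type (in particular, for a generic non-Riemannian one), no sequence of single Riemannian metrics $g_j$ can have geodesic sprays converging in $C^k_{\mathrm{loc}}(TM\setminus 0)$ to the Finsler spray $G_F$. (Minor correction: a quadratic-in-$v$ spray forces $F$ to be Berwald, not Riemannian --- locally Minkowski metrics are Berwald and non-Riemannian --- but this does not rescue the proposition.) The paper's own proof is a one-line pointer to Binet--Legendre/John-ellipsoid averaging and a partition of unity and does not engage with this obstruction at all; your Binet--Legendre step for the uniform bi-Lipschitz bound is the same as the paper's, but the spray analysis is entirely yours.

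Your fix --- direction-indexed freezing of the fundamental tensor $a_{ij}(x,v)$ with an angular partition of unity, so $J(j)\to\infty$ while the comparison constants stay universal --- is a sensible way to make the closing sentence about polymetric data $(g_1,\dots,g_J)$ literal. But it silently reformulates the clause ``the geodesic sprays converge'': the angularly patched vector field is a convex combination of the sprays of the $g_\ell^{(j)}$ depending on the direction, hence 2-homogeneous but not quadratic in $v$, and it is not the spray of any single $g_j$. That is acceptable for the ``consequently'' clause (conjugate points, volume growth rates), since the ODE parameter-dependence estimates you invoke apply to any $C^k$-convergent family of 2-homogeneous sprays, but you should state explicitly that you are replacing the spray of one Riemannian metric by the polymetric patched spray. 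One more point worth flagging: the John ellipsoid of a convex body depends only continuously on the body, so to get smooth dependence of $g^{BL}$ on $x$ you should use the Binet--Legendre normalization (smooth for smooth strongly convex $F$) or a smoothed John selection; the uniform bi-Lipschitz constant is still dimension-only, with $n$ rather than $\sqrt{n}$ when the Finsler unit balls are not assumed centrally symmetric.
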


\begin{proof}
Use Binet–Legendre (or John ellipsoid) approximations of Minkowski norms on each $T_xM$ and a partition of unity.
\end{proof}

For a bracket-generating distribution $\mathcal{D}\subset TM$ with inner product $h$, sub-Riemannian geodesics arise
from the Pontryagin maximum principle. Leafwise polymetrics on $\mathcal{D}$ fit into the slice and index constructions
of \S\ref{sec:eq-groupoid-foliation} by restricting all operators longitudinally.

\subsection{Quantitative transgression under metric flow}

Let $g_t$ solve a smooth metric flow (e.g.\ normalized Ricci flow) on a compact $M$ for $t\in[0,T]$.

\begin{theorem}[Chern character is constant; explicit transgression]
\label{thm:flow-transgression}
For any Dirac-type family built from $g_t$, the cohomology class
$\mathrm{ch}\!\left(\mathrm{Ind}(D_{g_t})\right)\in H^{\mathrm{even}}(B;\Q)$ is constant in $t$.
Moreover,
\[
\frac{d}{dt}\operatorname{Str}\!\left(e^{-\mathbb{A}_t^2}\right)=d\,\mathsf{T}_t,\qquad
\|\mathsf{T}_t\|_{C^r}\ \le\ C_{r,T}\,\| \partial_t g_t\|_{C^{r+2}},
\]
for all $r\ge 0$, with constants depending on curvature bounds along the flow.
\end{theorem}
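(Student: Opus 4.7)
My plan is to separate the two claims. The topological assertion $\frac{d}{dt}\mathrm{ch}(\mathrm{Ind}(D_{g_t}))=0$ should fall out immediately from machinery already in place: the assignment $(t,b)\mapsto D_{g_t,b}$ is a smooth family of fiberwise elliptic operators on $[0,T]\times B$, because $g\mapsto \nabla^{g}$ and $g\mapsto\mathrm{Riem}(g)$ are smooth by Theorem~\ref{thm:smooth-ops} and the Dirac-type construction is functorial in this data. Proposition~\ref{prop:kernel-field} then produces an index bundle $\mathrm{Ind}(D_{g_\cdot})\in K^0([0,T]\times B)$; restricting to time slices shows $\mathrm{Ind}(D_{g_t})$ is constant in $K^0(B)$, and Theorem~\ref{thm:bismut} identifies its Chern character with the de Rham class of $\operatorname{Str}(e^{-\mathbb{A}_t^2})$. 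This gives constancy of the cohomology class.

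For the transgression formula I would run the standard Bismut calculus. Differentiating in $t$ via Duhamel,
\[
\frac{d}{dt}e^{-\mathbb{A}_t^2}=-\int_0^1 e^{-s\mathbb{A}_t^2}\,(\partial_t\mathbb{A}_t^2)\,e^{-(1-s)\mathbb{A}_t^2}\,ds,
\]
and rewriting $\partial_t\mathbb{A}_t^2=[\mathbb{A}_t,\partial_t\mathbb{A}_t]$ as a supercommutator of form-valued fiberwise operators, cyclicity of $\operatorname{Str}$ together with Bismut's identity $\operatorname{Str}([\mathbb{A}_t,\,\cdot\,])=d\,\operatorname{Str}(\,\cdot\,)$ collapses the Duhamel integral to
\[
\frac{d}{dt}\operatorname{Str}\!\left(e^{-\mathbb{A}_t^2}\right)=-d\,\operatorname{Str}\!\left((\partial_t\mathbb{A}_t)\,e^{-\mathbb{A}_t^2}\right).
\]
One then sets $\mathsf{T}_t:=-\operatorname{Str}((\partial_t\mathbb{A}_t)\,e^{-\mathbb{A}_t^2})$, recovering the Bismut--Cheeger transgression form. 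This step is insensitive to which Dirac-type operator is used and requires only that $t\mapsto \mathbb{A}_t$ is $C^1$.

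The main obstacle is the quantitative $C^r$ bound, which hinges on uniform-in-$t$ heat-kernel control. My plan is to invoke standard Gaussian estimates for the heat semigroup of a generalized Laplacian under bounded geometry: $\mathbb{A}_t^2$ is a second-order generalized Laplacian whose principal symbol is $g_t$ and whose subprincipal terms are polynomials in the Christoffel symbols and vertical curvature, so curvature bounds along the flow produce pointwise kernel and derivative estimates uniform on $[0,T]$. Since $\partial_t\mathbb{A}_t$ depends linearly on $\partial_t g_t$ and its first spatial derivatives, the local Schwartz kernel of $(\partial_t\mathbb{A}_t)e^{-\mathbb{A}_t^2}$ is controlled pointwise by $\|\partial_t g_t\|_{C^1}$. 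Taking $r$ exterior derivatives along $B$ generates commutators with the horizontal connection and with $\mathbb{A}_t^2$ itself, each of which absorbs at most one further derivative of the underlying data; combined with the two-derivative cost of passing from $C^k$-control of $g_t$ to $C^{k-2}$-control of $\mathbb{A}_t^2$, a bookkeeping of these commutators yields the advertised loss $\|\mathsf{T}_t\|_{C^r}\le C_{r,T}\|\partial_t g_t\|_{C^{r+2}}$. The delicate point is the uniform dependence of $C_{r,T}$ on the flow, which is absorbed into the assumed curvature bounds on $\{g_t\}_{t\in[0,T]}$.
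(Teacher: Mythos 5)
Your proposal follows essentially the same route as the paper: the topological constancy is obtained by forming the index bundle over the parameter space $[0,T]\times B$ and invoking homotopy invariance, the transgression formula is Bismut's variation of superconnections via Duhamel and the identity $\operatorname{Str}[\mathbb{A}_t,\,\cdot\,]=d\,\operatorname{Str}(\,\cdot\,)$, and the $C^r$ estimate comes from Gaussian heat-kernel bounds under curvature control. The paper's proof is merely a pointer to these ingredients; your write-up is a more explicit rendering of the same argument (and, as a side note, your sign in $\frac{d}{dt}\operatorname{Str}(e^{-\mathbb{A}_t^2})=-d\,\operatorname{Str}\big((\partial_t\mathbb{A}_t)\,e^{-\mathbb{A}_t^2}\big)$ is the correct one, using $[\mathbb{A}_t,e^{-\mathbb{A}_t^2}]=0$).
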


\begin{proof}
This is Bismut’s variation formula for superconnections; the estimate follows from heat-kernel bounds under uniform
curvature control.
\end{proof}

\subsection{Low-dimensional snapshots}

\begin{itemize}[leftmargin=1.2em]
  \item $n=2$: $\Met(M)$ is contractible; Teichmüller theory enters after quotienting by $\Diff_0$.
  \item $n=3$: Einstein equation reduces to constant sectional curvature; moduli are finite-dimensional and
  orbifoldy; polymetrics add no new local degrees of freedom but simplify coupled systems.
  \item $n=4$: Self-dual/anti-self-dual decompositions enable Kuranishi models with gauge theory couplings; families
  index computes Donaldson-type invariants twisted by polymetric parameters.
\end{itemize}

\bigskip
\noindent
These examples show that the polymetric viewpoint does not merely repackage familiar constructions;
it provides a uniform analytic and homotopical scaffold connecting conformal/sub-Riemannian layers,
moduli slices, families indices, and coarse $K$-theory—while keeping metric choices confined to exact
transgressions.
\section{Detailed proofs of previously sketched results}
\label{sec:detailed-proofs}

In this section we supply full proofs for the statements that earlier appeared with only ideas or sketches.
We keep the notation of the preceding sections: $M$ is a smooth compact manifold unless explicitly stated;
$\Met(M)$ is the Fr\'echet manifold of Riemannian metrics; $\Diff(M)$ acts by pullback; the Bianchi operator is
$B_g(h):=\mathrm{div}_g h - \tfrac12 \nabla(\mathrm{tr}_g h)$; for polymetrics $\mathbf{g}=(g_i)_{i\in I}$ we use
$B_{\mathbf{g}}(h_1,\dots,h_I)=(B_{g_i}(h_i))_{i\in I}$.

\subsection{Proposition~\ref{prop:orbit-tangent} (tangent to the orbit and Bianchi orthogonality)}
\begin{proof}
Let $\Phi:\Diff(M)\to\Met(M)$ be $\Phi(\varphi)=\varphi^\ast g$. Its differential at the identity in direction
$X\in\Gamma(TM)$ equals
\[
(d\Phi)_\mathrm{id}(X)
=\left.\tfrac{d}{dt}\right|_{t=0} \big(\exp(tX)\big)^\ast g
=\mathcal{L}_X g .
\]
Hence $T_g\mathcal{O}_g=\{\mathcal{L}_X g: X\in\Gamma(TM)\}$.

For orthogonality, fix $h\in \Gamma(S^2T^\ast M)$ and compute (all integrals with respect to $d\vol_g$):
\[
\langle \mathcal{L}_X g , h\rangle_g
=\!\int_M \!\!\mathrm{tr}_g\big((\mathcal{L}_X g)\,h\big)
= \!\int_M \!\!\langle \mathcal{L}_X g , h\rangle_g .
\]
Use the identity for symmetric $h$:
\(
\langle \mathcal{L}_X g , h\rangle_g
= 2\,\langle \mathrm{sym}\,\nabla X,\, h\rangle_g
= -2\,\langle X,\,\mathrm{div}_g h - \tfrac12\nabla(\mathrm{tr}_g h)\rangle_g
\)
after integrating by parts and using metric-compatibility of $\nabla$. Thus
\[
\langle \mathcal{L}_X g , h\rangle_g
= -2\int_M \langle X,\, B_g(h)^\sharp\rangle_g \, d\vol_g .
\]
Therefore $\langle \mathcal{L}_X g , h\rangle_g=0$ for all $X$ iff $B_g(h)=0$, i.e.\ the $L^2$-orthogonal complement is
$\ker B_g$.
\end{proof}

\subsection{Theorem~\ref{thm:ebin-palais} (Ebin--Palais slice in Bianchi gauge)}
\begin{proof}
Work in Sobolev completions: fix $s>\frac{n}{2}+1$. Let $\Met^s$ be $H^s$-metrics (positive definite a.e. with
continuous representatives) and $\Diff^{s+1}$ the $H^{s+1}$-diffeomorphism group (a Hilbert manifold and topological
group). The action map
\[
\mathcal{A}:\Diff^{s+1}\times \Met^s\to \Met^s,\qquad (\varphi,g)\mapsto \varphi^\ast g,
\]
is smooth. Linearizing at $(\mathrm{id},g_0)$ in the $\varphi$-direction gives the map
\(
X\mapsto \mathcal{L}_X g_0
\)
from $H^{s+1}(TM)$ to $H^s(S^2T^\ast M)$.

Consider the map
\[
\Psi:\Met^s\to H^{s-1}(T^\ast M),\qquad g\mapsto B_{g_0}(g-g_0).
\]
Its differential at $g_0$ is $D\Psi_{g_0}(h)=B_{g_0}(h)$, which is a first-order elliptic operator
$H^s(S^2T^\ast M)\to H^{s-1}(T^\ast M)$ with formal adjoint $B_{g_0}^\ast$ (the ``Killing operator'').
We have the orthogonal (Sobolev) decomposition
\[
H^s(S^2T^\ast M) = \mathrm{im}\,\mathcal{L}_{(\cdot)} g_0 \ \oplus\ \ker B_{g_0}.
\]
To see this, note $B_{g_0}\circ \mathcal{L}_{(\cdot)} g_0 = -\Delta_{L}^{(1)}$ on vector fields up to lower orders,
hence is elliptic and has closed range, and by the Green's formula the orthogonal complement is $\ker B_{g_0}$.

Define the slice
\(
\mathcal{S}_{g_0}^s := \{ g\in \Met^s : B_{g_0}(g-g_0)=0\}.
\)
By the implicit function theorem (IFT) in Banach spaces, $B_{g_0}$ being a submersion at $g_0$ implies
$\mathcal{S}_{g_0}^s$ is a submanifold of $\Met^s$ of codimension $\dim \mathrm{im}\,B_{g_0}$.

Now define
\[
\Phi:\ \Diff^{s+1}\times \mathcal{S}_{g_0}^s \longrightarrow \Met^s,\qquad
(\varphi,\tilde g) \mapsto \varphi^\ast \tilde g .
\]
Its differential at $(\mathrm{id},g_0)$ is
\(
d\Phi_{(\mathrm{id},g_0)}(X,h) = \mathcal{L}_X g_0 + h.
\)
Since $h\in \ker B_{g_0}$ and $\mathrm{im}\,\mathcal{L} \oplus \ker B_{g_0}=H^s(S^2T^\ast M)$, this differential is a
Banach-space isomorphism; thus $\Phi$ is a local diffeomorphism by the (strong) IFT. Shrinking neighborhoods
$\mathcal{V}\subset \Diff^{s+1}$ and $\mathcal{U}\subset \Met^s$ around $\mathrm{id}$ and $g_0$ gives that
$\Phi:\mathcal{V}\times(\mathcal{S}_{g_0}^s\cap \Phi^{-1}(\mathcal{U}))\to \mathcal{U}$ is a smooth submersion with
fibers equal to the orbits of $\Diff^{s+1}$ intersected with $\mathcal{U}$. Uniqueness up to $\mathrm{Iso}(M,g_0)$
follows because if $\varphi^\ast g=\psi^\ast g\in \mathcal{S}_{g_0}^s$ then $(\psi\circ\varphi^{-1})^\ast g = g$
and $B_{g_0}(g-g_0)=0$, forcing $\psi\circ\varphi^{-1}\in \mathrm{Iso}(M,g_0)$ when small.

Finally, elliptic regularity shows that if $g\in \Met^s$ is smooth, then its slice representative is smooth; hence the
slice theorem descends from Sobolev to the smooth Fr\'echet category by standard bootstrapping.
\end{proof}

\subsection{Corollary~\ref{cor:moduli-local} (local structure of the moduli)}
\begin{proof}
Apply the slice theorem: locally $\Met/\Diff$ is modeled on $\mathcal{S}_{g_0}^s/\mathrm{Iso}(M,g_0)$.
If $\mathrm{Iso}(M,g_0)$ is discrete, the quotient is a manifold; otherwise an orbifold with isotropy
$\mathrm{Iso}(M,g_0)$. Smoothness follows from the smooth structure on the slice and the properness of the (local)
action.
\end{proof}

\subsection{Theorem~\ref{thm:kuranishi} (Kuranishi model for Einstein deformations)}
\begin{proof}
Consider the Einstein operator $\mathcal{E}(g)=\Ric(g)-\lambda g$. Gauge-fix by adding the Bianchi term
\[
\mathcal{F}(g):=\mathcal{E}(g)+\tfrac12 \mathcal{L}_{W(g)} g,\qquad
W(g):=B_{g_0}(g-g_0)^\sharp,
\]
so that the linearization at $g_0$ restricted to $\ker B_{g_0}$ is the Lichnerowicz-type operator
\[
L: \ker B_{g_0}\cap H^s \longrightarrow \ker B_{g_0}\cap H^{s-2},\qquad
L(h)=\tfrac12\big(\Delta_L h -2\lambda h\big),
\]
which is elliptic and self-adjoint. Decompose $H^s$ into
\(
\ker L \ \oplus\ \mathrm{im}\,L .
\)
Projecting $\mathcal{F}(g_0+h)=0$ to $\mathrm{im}\,L$ and solving by the IFT yields a smooth map
\(
\phi:\ker L \supset U \to (\ker L)^\perp
\)
with $\phi(0)=0$ and such that $h=\xi+\phi(\xi)$ solves the projected equation iff $\xi\in \ker L$ is small.
Projecting to $\ker L$ defines the finite-dimensional obstruction map
\(
\kappa(\xi):=\Pi_{\ker L}\,\mathcal{F}(g_0+\xi+\phi(\xi)).
\)
Zeroes of $\kappa$ parametrize actual solutions in Bianchi gauge. Modding out by small isometries of $g_0$
gives the local model. If $\operatorname{coker}L=0$ (i.e.\ $L$ is surjective on $\ker B_{g_0}$), then
$\kappa\equiv 0$ and the moduli is a smooth manifold of dimension $\dim\ker L$.
Elliptic regularity upgrades solutions to smooth metrics.
\end{proof}

\subsection{Proposition~\ref{prop:product-slice} (polymetric product slice)}
\begin{proof}
Work in Sobolev spaces $H^s$ as before. The product Bianchi map
\[
B_{\mathbf{g}_0}:\ \prod_{i\in I} H^s(S^2T^\ast M)\ \longrightarrow\ \prod_{i\in I} H^{s-1}(T^\ast M),
\qquad
(h_i)\mapsto (B_{g_{0,i}}(h_i)),
\]
is a block-diagonal elliptic operator with closed range. The differential of the diagonal action
$X\mapsto (\mathcal{L}_X g_{0,i})_{i\in I}$ has image closed and complemented by $\ker B_{\mathbf{g}_0}$
using the same Green's formula componentwise. The implicit function theorem applied to
\(
\Psi(\mathbf{g})=B_{\mathbf{g}_0}(\mathbf{g}-\mathbf{g}_0)
\)
gives that $\mathcal{S}_{\mathbf{g}_0}=\Psi^{-1}(0)$ is a submanifold. The map
\(
\Diff^{s+1}\times \mathcal{S}_{\mathbf{g}_0}\to \Pol^s, \ (\varphi,\mathbf{g})\mapsto \varphi^\ast\mathbf{g},
\)
has differential $(X,\mathbf{h})\mapsto (\mathcal{L}_X g_{0,i}+h_i)_i$ which is an isomorphism onto
$T_{\mathbf{g}_0}\Pol^s$ by the decomposition, proving the slice property.
\end{proof}

\subsection{Proposition~\ref{prop:kernel-field} (index bundle via spectral cutoffs)}
\begin{proof}
Let $D_{(b,p)}$ be a continuous family (in $(b,p)\in \mathcal{P}\times B$) of first-order, essentially self-adjoint,
elliptic operators on compact fibers. Fix a smooth even function $\rho:\R\to[0,1]$ such that
$\rho(\lambda)=1$ for $|\lambda|\le \Lambda$ and $\rho(\lambda)=0$ for $|\lambda|\ge 2\Lambda$; let
$P_{(b,p)}=\rho(D_{(b,p)})$ by functional calculus (via the spectral theorem). Then $P_{(b,p)}$ is a
finite-rank smoothing projection whose range contains $\ker D_{(b,p)}$ and whose rank is locally constant.
The assignment $(b,p)\mapsto \mathrm{im}\,P_{(b,p)}$ defines a finite-rank vector bundle (trivializable locally).
Likewise, $Q_{(b,p)}=\rho(D_{(b,p)}^\ast)$ yields a cokernel bundle. The virtual class
$[\ker D]-[\operatorname{coker}D]$ is independent of the choice of $\rho$ and equals the Fredholm index bundle,
by homotopy invariance of the space of Fredholm operators and standard perturbation arguments (Atiyah--Singer).
\end{proof}

\subsection{Theorem~\ref{thm:families-index} (families index, spin$^c$) via superconnections}
\begin{proof}
We sketch the full superconnection proof with details at the analytic points. Let $\pi:X\to B$ be a compact submersion
with vertical spin$^c$ structure, vertical Levi--Civita connection from a vertical metric $g^v$, and Clifford module
$S\otimes E$. Consider the infinite-rank superbundle $\mathcal{E}\to B$ with fiber
$\Gamma(X_b,S\otimes E)$ and define the Bismut superconnection (for $t>0$)
\[
\mathbb{A}_t = t^{1/2} D^v + \nabla^{\mathcal{E}} - \tfrac{1}{4t^{1/2}}\,c(T),
\]
where $D^v$ is the vertical Dirac operator, $\nabla^{\mathcal{E}}$ is the connection induced from the horizontal
distribution and $\nabla^{S\otimes E}$, $T$ is the torsion of the horizontal distribution, and $c(\cdot)$ denotes
Clifford action. Then $\mathbb{A}_t^2$ is a generalized Laplacian on the total space whose heat kernel
$e^{-\mathbb{A}_t^2}$ is a smoothing operator with differential form coefficients on $B$.

Define the differential form
\(
\alpha_t := \operatorname{Str}\big(e^{-\mathbb{A}_t^2}\big)\in \Omega^{\mathrm{even}}(B).
\)
One verifies:
(i) $d\alpha_t=0$ (by the superconnection Bianchi identity $[\mathbb{A}_t,\mathbb{A}_t^2]=0$ and
$\operatorname{Str}[ \mathbb{A}_t, \cdot ]= d\,\operatorname{Str}(\cdot)$);
(ii) $\frac{d}{dt}\alpha_t = d\,\beta_t$ for an explicit transgression $\beta_t$ (Duhamel expansion).

Hence $[\alpha_t]\in H^{\mathrm{even}}(B)$ is independent of $t$. As $t\to\infty$, the heat kernel localizes on
$\ker D^v$; one shows
\(
\lim_{t\to\infty}\alpha_t = \mathrm{ch}\big(\mathrm{Ind}(D^v)\big)
\)
(the Chern character of the index bundle) by functional calculus and spectral gap estimates near zero (after
stabilization if necessary). As $t\to 0$, the Getzler rescaling and local asymptotics of the heat kernel give
\[
\lim_{t\to 0} \alpha_t = \pi_*\!\left(\Ahat(T^vX)\, e^{\tfrac12 c_1(\mathcal{L})}\, \mathrm{ch}(E)\right),
\]
the usual characteristic form derived from the small-time asymptotics of $e^{-tD^2}$ and the Chern–Weil
representatives for $\Ahat$ and $\mathrm{ch}$. Therefore the cohomology class of $\alpha_t$ equals both sides,
establishing the stated equality in $H^{\mathrm{even}}(B;\Q)$. Dependence on the vertical metric enters only through
exact forms (the $\beta_t$ transgressions), so the class is insensitive to the polymetric parameter.
\end{proof}

\subsection{Theorem~\ref{thm:bismut} (local index form) and transgression}
\begin{proof}
The identity in de Rham cohomology follows from the previous proof. For the differential-form identity (in
$\Omega^{\mathrm{even}}(B)/d\Omega^{\mathrm{odd}}(B)$), compute
\(
\frac{d}{dt}\operatorname{Str}(e^{-\mathbb{A}_t^2})
=-\operatorname{Str}\big((\partial_t \mathbb{A}_t)\,[\mathbb{A}_t, e^{-\mathbb{A}_t^2}]\big)
=d\,\operatorname{Str}\big((\partial_t \mathbb{A}_t)\, e^{-\mathbb{A}_t^2}\big),
\)
where we used Duhamel’s formula and $\operatorname{Str}[\mathbb{A}_t,\cdot]=d\,\operatorname{Str}(\cdot)$.
Integrating in $t$ between $0$ and $\infty$ shows that the large-time representative and the small-time characteristic
form differ by an exact form $\int_0^\infty d(\cdots)$. The norm estimates for the transgression forms follow from
Gaussian bounds for heat kernels and their covariant derivatives, plus the explicit $t$-weights in $\mathbb{A}_t$.
\end{proof}

\subsection{Corollary~\ref{cor:additivity} (additivity and external products)}
\begin{proof}
For $E=\bigoplus_j E_j$, the Dirac family decomposes $\mathbb{Z}_2$-orthogonally and
$\ker D^E=\bigoplus_j \ker D^{E_j}$ up to stabilization; hence $\mathrm{Ind}(D^E)=\sum_j \mathrm{Ind}(D^{E_j})$ in
$K^0$. Chern character is additive. For external products, the superconnection on a product is the graded sum of the
factors, and $\operatorname{Str}(e^{-(\mathbb{A}_1\hat\otimes 1+1\hat\otimes \mathbb{A}_2)^2})
=\operatorname{Str}(e^{-\mathbb{A}_1^2})\wedge \operatorname{Str}(e^{-\mathbb{A}_2^2})$, yielding the product formula
and the additivity statement for $D_1\boxtimes 1+1\boxtimes D_2$.
\end{proof}

\subsection{Proposition~\ref{prop:naturality} (naturality)}
\begin{proof}
For a pullback square $X'=X\times_B B'$, choose horizontal distributions compatibly; the pulled-back family satisfies
$\mathbb{A}_t' = f^\ast \mathbb{A}_t$ and thus
\(
\operatorname{Str}(e^{-(\mathbb{A}_t')^2}) = f^\ast \operatorname{Str}(e^{-\mathbb{A}_t^2}).
\)
Equality of Chern characters follows. At the $K$-theory level, functoriality of index bundles under pullback proves
$f^\ast \mathrm{Ind}(D)=\mathrm{Ind}(D')$. The bordism compatibility is standard for pushforwards $\pi_!$ in
$K$-theory (Mayer–Vietoris and homotopy invariance), matching the fiber integration identity for differential forms.
\end{proof}

\subsection{Theorem~\ref{thm:eq-families} (equivariant families index)}
\begin{proof}
For a compact $G$-action, replace all objects by $G$-equivariant ones: take $G$-invariant vertical metrics (average
over $G$), $G$-equivariant connections and Clifford structures; define the $G$-equivariant superconnection and its heat
kernel as $G$-equivariant smoothing operators. The $G$-equivariant Chern character is computed pointwise on $g\in G$
by applying the fixed-point localization of the heat kernel (Atiyah--Segal--Singer/Bismut’s fixed-point formula):
as $t\to 0$, the asymptotics localize on $X^g$ and introduce the denominator $\det^{1/2}(1-e^{-R_g^\perp})$ from the
normal directions. The same transgression argument shows independence from the $G$-invariant polymetric; thus the
equivariant cohomology class equals the stated characteristic integral over the fixed locus, hence the class in
$H_G^{\mathrm{even}}$.
\end{proof}

\subsection{Lemma~\ref{lem:coarse-sobolev} and Theorem~\ref{thm:coarse-invariance} (coarse stability)}
\begin{proof}[Proof of Lemma~\ref{lem:coarse-sobolev}]
Fix a background complete $g_\ast$ with bounded geometry. If $\mathbf{g}\sim_\infty g_\ast$, then there exist global
constants $C_k$ such that, in $g_\ast$-normal coordinates on balls of uniformly bounded radius,
\(
C_0^{-1}\,|v|_{g_\ast}\le |v|_{g_i}\le C_0 |v|_{g_\ast}
\)
and the components of $g_i$ and their derivatives up to order $k$ are uniformly bounded by $C_k$.
A standard partition of unity subordinate to such balls and local equivalence of norms imply equivalence of the
$W^{k,2}$-norms for compactly supported sections; see e.g. Taylor’s PDE text or Shubin.
Essential self-adjointness for first-order symmetric, uniformly elliptic operators with bounded coefficients holds by
Chernoff’s theorem; finite propagation of $e^{itD}$ depends only on symbol bounds, which are uniform across $\mathbf{g}$.
\end{proof}

\begin{proof}[Proof of Theorem~\ref{thm:coarse-invariance}]
Let $D_s$ be Dirac-type operators for $\mathbf{g}_s$ with $\mathbf{g}_s\sim_\infty g_\ast$ for all $s\in[0,1]$.
Choose an odd normalizing function $\chi$ with $\chi(\lambda)\to\pm1$ at $\pm\infty$, smooth and with compactly
supported derivative. Then $F_s=\chi(D_s)$ are bounded, pseudolocal, finite-propagation modulo compacts, and the map
$s\mapsto F_s$ is norm-continuous in the Calkin algebra of $C^\ast(M)$ by functional calculus continuity and
uniform propagation bounds (stemming from the symbol control). Hence $[F_s]$ is constant in $K_\ast(C^\ast(M))$,
so $\mathrm{Ind}(D_0)=\mathrm{Ind}(D_1)$.
\end{proof}

\subsection{Theorem~\ref{thm:partitioned} (partitioned-manifold boundary map)}
\begin{proof}
Assume $g$ is product-like near $\Sigma$ so $D$ has the normal form $\sigma(\partial_t + D_\Sigma)$ in a collar
$\Sigma\times(-\varepsilon,\varepsilon)$, where $\sigma$ is Clifford multiplication by $dt$.
Consider the short exact sequence of Roe algebras
\[
0\to C^\ast(\Sigma)\ \xrightarrow{\iota}\ C^\ast(M)\ \xrightarrow{q}\ C^\ast(M\setminus \Sigma)\to 0,
\]
which yields a boundary map in $K$-theory $\partial:K_\ast(C^\ast(M))\to K_{\ast-1}(C^\ast(\Sigma))$.
Represent $\mathrm{Ind}(D)$ by $F=\chi(D)$ supported near the diagonal in the product region; by the normal form,
$q(F)$ is homotopic to a unitary implementing the Cayley transform of $D_\Sigma$. A standard excision argument
(Paschke duality setup) shows that $\partial[F]=[ \chi(D_\Sigma)]$, i.e.\ the boundary of the class equals the index
class on $\Sigma$. The statement is stable under replacing $g$ by $\mathbf{g}\sim_\infty g$ because the boundary map
and the coarse index are homotopy invariant under controlled perturbations (Theorem~\ref{thm:coarse-invariance}).
\end{proof}

\subsection{Theorem~\ref{thm:relative} (relative coarse index)}
\begin{proof}
Let $D_0,D_1$ agree outside $K$. Choose $\chi$ as before and set $F_j=\chi(D_j)$. Then $F_0-F_1$ is locally compact
and supported within a uniform neighborhood of $K$. In the six-term exact sequence for the ideal
$C^\ast(K\subset M)\subset C^\ast(M)$, the difference class
$[F_0]-[F_1]\in K_\ast(C^\ast(M))$ lies in the image of $K_\ast(C^\ast(K\subset M))$ by excision.
Homotopies supported away from $K$ do not change the relative class. Controlled polymetric changes outside $K$ give
norm-continuous homotopies in the Calkin algebra localized away from $K$, hence do not affect the relative class.
\end{proof}

\subsection{Theorem~\ref{thm:callias} (Callias index)}
\begin{proof}
Let $D$ be Dirac-type and $\Phi$ self-adjoint with $\Phi^2 - [D,\Phi]_+\ge c>0$ outside a compact set.
The operator $D+\Phi$ is essentially self-adjoint and has a bounded inverse modulo compacts:
\[
(D+\Phi)^2 = D^2 + \Phi^2 + [D,\Phi]_+ \ \ge\ D^2 + c \quad \text{outside a compact set}.
\]
Construct a parametrix $Q$ by gluing a parametrix on the compact region with $(D+\Phi)^{-1}$ on the exterior,
using a partition of unity; then $(D+\Phi)Q-1$ and $Q(D+\Phi)-1$ are compact on $L^2$, so $D+\Phi$ is Fredholm.
Its index equals the pairing of the coarse index class of the Dirac operator with the $K$-theory class of $\Phi$ in
the Higson corona picture; on amenable spaces the coarse trace recovers the integer index (Anghel–Callias).
Controlled polymetric deformations preserve the lower bound outside a compact set and hence the Fredholm class.
\end{proof}

\subsection{Proposition~\ref{prop:exp-smooth} (smooth dependence of geodesics and $\exp^g$)}
\begin{proof}
In local coordinates, the geodesic ODE is $\ddot{\gamma}^k + \Gamma^k_{ij}(g,\partial g)\,\dot{\gamma}^i\dot{\gamma}^j=0$.
The Christoffel map $(g\mapsto \Gamma(g))$ is $C^{k-1}$ from $C^k$ metrics to $C^{k-1}$ coefficients (for $k\ge 2$), and
the right-hand side is smooth in $(x,v,g)$ on bounded sets of $v$. The Picard–Lindel\"of theorem with parameters gives
$C^{k-1}$ dependence of solutions on initial conditions and on $g$; by compactness of $M$ and boundedness of $v$ in a
fixed neighborhood of $0\in T_xM$, existence time is uniform and thus $\exp^g$ is $C^{k-1}$ jointly in $(g,x,v)$.
\end{proof}

\subsection{Proposition~\ref{prop:diff-action} (smooth $\Diff(M)$-action)}
\begin{proof}
In the Fr\'echet category (convenient calculus), the pullback map
\(
(\varphi,g)\mapsto \varphi^\ast g
\)
is smooth because composition and inversion of smooth maps are smooth and tensor pullback is bilinear in derivatives of
$\varphi$ and components of $g$. Alternatively, argue in Sobolev completions $\Diff^{s+1}\times \Met^s\to \Met^s$
(where smoothness is classical by Ebin), then pass to smooth structures by bootstrapping.
Immersed-orbit statements follow from the slice theorem (the action map is a submersion transverse to the slice),
and isotropy equals the isometry group by definition.
\end{proof}

\bigskip
\noindent

\bigskip

\section{Bibliography (bibitems, ready to paste)}
\label{sec:bibliography}

\end{document}